\documentclass[reqno]{amsart}
 \usepackage{soul}
\usepackage{xfrac}
 \usepackage{cancel}
\usepackage{hyperref}
\usepackage{amsmath}  
\usepackage{amsfonts}
\usepackage{amssymb} 
\usepackage{mathrsfs} 
\usepackage{graphicx}  
\usepackage{bm}
\usepackage{xcolor}  
\usepackage{amsthm} 
\usepackage{float}
\usepackage{enumitem}
\usepackage[english]{babel}
\usepackage[font=footnotesize, labelfont=bf]{ caption}
\usepackage{esint}
\usepackage{stackengine,scalerel}
\stackMath 

\definecolor{trueblue}{rgb}{0.0, 0.45, 0.81}
\definecolor{truegreen}{rgb}{0.13, 0.55, 0.13}

\newcommand{\e}{\varepsilon}
\newcommand{\xa}{\boldsymbol{x_\alpha}}

\newcommand{\xia}{\boldsymbol{\xi_\alpha}}
\newcommand{\xid}{\boldsymbol{\xi_d}}

\newcommand{\zd}{\boldsymbol{z_d}}
\newcommand{\zea}{\boldsymbol{\zeta_\alpha}}
\newcommand{\zed}{\boldsymbol{\zeta_d}}

\newcommand{\ie}{; {\it i.e.}, }
\theoremstyle{plain}
\begingroup
\newtheorem{theorem}{Theorem}[section]
\newtheorem{lemma}[theorem]{Lemma}
\newtheorem{example}[theorem]{Example}
\newtheorem{remark}[theorem]{Remark}
\newtheorem{proposition}[theorem]{Proposition}

\endgroup

\theoremstyle{definition}
\begingroup
\newtheorem{definition}[theorem]{Definition}
\endgroup

\renewcommand{\tilde}{\widetilde}

\renewcommand{\d}{\mathrm{d}}

\newcommand{\F}{\mathcal{F}}
\newcommand{\G}{\mathcal{G}}

\newcommand{\N}{\mathbb{N}}
\newcommand{\Z}{\mathbb{Z}}
\newcommand{\R}{\mathbb{R}}

\newcommand{\x}{{\times}}

\newcommand{\A}{\mathcal{A}}
\newcommand{\Ar}{\mathcal{A}^{\rm reg}}
\newcommand{\loc}{\mathrm{loc}}

\newcommand{\ave}{-\hskip -.38cm\int}

\DeclareMathOperator{\dist}{dist}

\DeclareMathOperator*{\Glim}{\Gamma-lim}
\DeclareMathOperator*{\Gliminf}{\Gamma-liminf}
\DeclareMathOperator*{\Glimsup}{\Gamma-limsup}

\numberwithin{equation}{section}

\usepackage[normalem]{ulem}

\begin{document}
	
\title[Nonlocal Thin films]{Multiscale analysis and homogenization of nonlocal thin films}	
	
\author{Nadia Ansini}
\address[Nadia Ansini]{Department of Mathematics \emph{Guido Castelnuovo}, University of Rome \emph{La Sapienza}, Piazzale Aldo Moro 5,
00185 Rome, Italy}
\email{ansini@mat.uniroma1.it}
	
\author[Antonio Tribuzio]{Antonio Tribuzio} 
\address[Antonio Tribuzio]{Institute for Applied Mathematics, University of Bonn, Endenicher Allee 60,
53115 Bonn, Germany}
\email{tribuzio@iam.uni-bonn.de}
	
\begin{abstract}
In this paper, we introduce a nonlocal, variational model for thin films.
We consider convolution-type functionals defined on a thin domain whose thickness is of order $\gamma$,  where the effective interactions range between points is of order $\e$.
We study the $\Gamma$-convergence of these energies, as both parameters vanish, to a local integral functional defined on a lower-dimensional domain.
In the periodic homogenization setting, the limit energy density is characterized by  an asymptotic formula that depends on the interplay between $\e$ and $\gamma$.
Under suitable assumptions, this formula exhibits a separation of scales effect, namely, the limit energy can be obtained by performing two successive $\Gamma$-limits, first letting one parameter tend to zero while keeping the other fixed.
\end{abstract} 

\maketitle
	
\section{Introduction}\label{introduction}
The study of thin structures has become central to modern calculus of variations applied to materials.
Such structures can be viewed as objects whose extent in one (or more) spatial directions is significantly smaller than in the others.
In the case of thin films, they are typically modeled as cylindrical domains with negligible height (or thickness).
The variational theory of dimension reduction via $\Gamma$-convergence aims to derive an effective energy defined on a lower-dimensional domain, usually the cross-section, as the limit of a full-dimensional model as the thickness tends to zero.
In this paper, we introduce a nonlocal mathematical model for thin films and we develop variational tools to study its asymptotic analysis.
For a broad class of integral functionals, we establish a $\Gamma$-convergence result that depends on the interplay between the two natural scales of the problem: the thickness of the domain and the range of interaction between material points. At this aim a suitable scaling has to be identified.
The interest in developing such a nonlocal approach lies in its flexibility; nonlocal energies are well suited for numerical simulations and, unlike discrete models, they do not have a predetermined minimal interaction length.

\medskip

To introduce the framework of our work, it is convenient to briefly recall the \emph{local} case.
In this context, a fundamental contribution is due to Hervé Le Dret and Antoine Raoult \cite{LDR95}.
Let $\omega\subset\R^2$, and let $\gamma>0$ denote the thickness scale of the thin film $\Omega^\gamma:=\omega\times(-\gamma,\gamma)$.
The energy of a thin elastic body can be written as
\begin{equation}\label{eq:intro-loc}
F(v,\Omega^\gamma) = \int_{\Omega^\gamma}W(\nabla v(x))\d x, \quad v\in W^{1,2}(\Omega^\gamma;\R^3),
\end{equation}
where $W$ is an elastic energy density with quadratic growth.

After rescaling in the third direction, we obtain
$$
\frac{1}{\gamma}F(v,\Omega^\gamma) = \int_\Omega W\Big(\nabla_\alpha u(x)\Big|\frac{1}{\gamma}\partial_3 u(x)\Big)\d x,
\quad
x_3\mapsto\frac{x_3}{\gamma},
\quad
u(x)=v(x_\alpha,\gamma x_3),
$$
where $x_\alpha$ denotes the projection of $x$ onto the $x_1 x_2$-plane.
In this formulation, the rigorous derivation of a nonlinear elastic membrane energy as the limit of three-dimensional elasticity follows from
$$
\Glim_{\gamma\to0} \frac{1}{\gamma}F(v,\Omega^\gamma) = 2\int_\omega Q\overline{W}(\nabla_\alpha v(x_\alpha))\d x_\alpha,
\quad
\overline{W}(M) := \inf_{z\in\R^3} W(M|z),\, M\in\R^{3\times 2},
$$
for every $v\in W^{1,2}(\omega;\R^3)$, where $Q$ denotes the \emph{quasiconvexification} operator, and the $\Gamma$-limit is taken with respect to the strong convergence of the rescaled functions $u$.

Nonlocal functionals of convolution type can be regarded as a natural counterpart of elastic energies of the form \eqref{eq:intro-loc}, and have been extensively studied in full-dimensional “thick” domains.
A central question concerns their asymptotic behavior  as the interaction \emph{horizon} tends to zero.
A systematic treatment can be found in the monograph \cite{AABPT23}.
On a full-dimensional domain $\Omega\subset\R^d$, such functionals take the form 
\begin{equation}\label{eq:intro-nonloc}
F_\e(u,\Omega) := \int_{\R^d}\int_{\Omega_\e(\xi)} f_\e\Big(x,\xi,\frac{u(x+\e\xi)-u(x)}{\e}\Big)\d x \d\xi,
\end{equation}
where $f_\e$ satisfies suitable $p$-growth conditions for some $p>1$, together with a suitable decay in the interaction variable $\xi$.
Here $u\in L^p(\Omega;\R^m)$ and $\Omega_\e(\xi):=\Omega\cap(\Omega+\e\xi)$, while
 $\e>0$ represents the scale of the interaction range.

It is proved in \cite{AABPT23} that
$$
\Glim_{\e\to0} F_\e(u,\Omega) = \int_\Omega f_{\rm bulk}(x,\nabla u(x))\d x, \quad u\in W^{1,p}(\Omega;\R^m),
$$
up to subsequences.
Among the various applications of this result, in the case of $\e$-periodic densities
one obtains homogenization formulas for the effective density $f_{\rm bulk}$.

\medskip

The aim of this work is to develop a nonlocal theory for thin films.
Given the thin domain $\Omega^\gamma:=\omega\times(-\gamma,\gamma)$, where $\omega\subset\R^{d-1}$ denotes the cross-section, a nonlocal energy for thin films can be written as $F_\e(v,\Omega^\gamma)$, with $F_\e$ is defined in \eqref{eq:intro-nonloc}.

Due to the nonlocal nature of the problem, in contrast with the local setting, 
 it is not a priori clear that the energy contributions $F_\e(\cdot,\Omega^\gamma)$ scale proportionally  to the volume of the thin domain, namely as $\gamma$.
To identify the correct scaling, it is convenient to rewrite the energy as a double integral over $\Omega^\gamma\times\Omega^\gamma$ through the change of variables $y=x+\e\xi$, namely
$$
F_\e(v,\Omega^\gamma) = \frac{1}{\e^d} \iint_{\Omega^\gamma\x\Omega^\gamma} f_\e\Big(x,\frac{y-x}{\e},\frac{v(y)-v(x)}{\e}\Big)\d x\d y.
$$
From this representation it is apparent that, as the interaction density $f_\e$ essentially concentrates on the diagonal $\Delta_\e=\{(x,y)\in\R^{d\x d} : |x-y|<\e\}$, the scaling is proportional to
$$
\frac{1}{\e^d}|(\Omega^\gamma\x\Omega^\gamma)\cap\Delta_\e| \sim \min\Big\{\frac{\gamma^2}{\e},\gamma\Big\}.
$$
Indeed, when $\gamma\gg\e$ the measure $|(\Omega^\gamma\x\Omega^\gamma)\cap\Delta_\e|$ is comparable to that of a $d$-dimensional ball of radius $\e$, integrated over  $\Omega^\gamma$, namely $\gamma\e^d$.
On the other hand, when $\gamma\ll\e$, it scales like the volume of a cylinder of height $\gamma$ and base a $(d-1)$-dimensional ball of radius $\e$, integrated over $\Omega^\gamma$\ie $\gamma^2 \e^{d-1}$.
See Figure \ref{fig:intro1} for an illustration.
\begin{figure}[tb]
\includegraphics{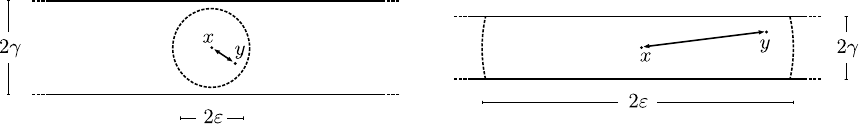}
\caption{Graphical representation of the $\e$-neighborhood of interactions between points.
On the left, the case $\gamma\gg\e$; on the right $\gamma\ll\e$.}
\label{fig:intro1}
\end{figure}

This heuristic argument can be made more explicit through the following computation.
Let $v\in C^2_c(\omega)$ and consider a convex interaction energy of \emph{purely convolution-type}, namely
$$
F_\e(v,\Omega^\gamma) = \int_{C_1}\int_{\Omega^\gamma_\e(\xi)}\Big|\frac{v(x_\alpha+\e\xi_\alpha)-v(x_\alpha)}{\e}\Big|^p\d x\d\xi
$$
where $C_1$ denotes the cylinder $B_1\times(-1,1)=\{\xi\in\R^d : |\xi_\alpha|<1, |\xi_d|<1\}$.
Exploiting the regularity of $v$, and up to negligible errors, $F_\e(v,\Omega^\gamma)$ behaves like 
\begin{align*}
F_\e(v,\Omega^\gamma) \sim \int_{-1}^{1}|(-\gamma,\gamma)_\e(\xi_d)|\d\xi_d \int_{B_1}\int_\omega\big|\nabla_\alpha v(x_\alpha)\cdot\xi_\alpha\big|^p\d x_\alpha\d\xi_\alpha.
\end{align*}
The scaling is therefore determined by the integral in the vertical variable
$$
\int_{-1}^{1}|(-\gamma,\gamma)_\e(\xi_d)|\d\xi_d =
\begin{cases} \displaystyle 
\frac{4\gamma^2}{\e}\,, & 2\gamma<\e \\
4\gamma-\e\,, & 2\gamma\ge\e\,.
\end{cases}
$$
This suggests that the correct scaling factor is $\min\{\gamma^2/\e,\gamma\}$.
A rigorous justification is provided in Section \ref{sec:scaling}, complemented by a compactness result (cf.\ Theorem \ref{kolcom}).

Once the scaling is identified, we define the nonlocal energy for thin films as
\begin{equation}\label{eq:intro-nltf}
F_{\e,\gamma}(v) := \max\Big\{\frac{\e}{\gamma^2},\frac{1}{\gamma}\Big\} \int_{\R^d}\int_{\Omega^\gamma_\e(\xi)} f_\e\Big(x,\xi,\frac{u(x+\e\xi)-u(x)}{\e}\Big)\d x\d\xi.
\end{equation}
Proceeding as in the local case, we perform the rescaling
$$
x_d \mapsto \frac{x_d}{\gamma},
\quad
u(x) = v(x_\alpha,\gamma x_d),
\quad
f_{\e,\gamma}(x,\cdot,\cdot) = f_\e(x_\alpha,\gamma x_d,\cdot,\cdot),
$$
which yields the following reformulation on the fixed domain $\Omega:=\omega\times(-1,1)$
\begin{equation}\label{eq:intro-rescaled}
\mathcal{F}_{\e,\gamma}(u) := \max\Big\{\frac{\e}{\gamma},1\Big\} \int_{\R^d}\int_{\Omega_{\e,\frac{\e}{\gamma}}(\xi)}f_{\e,\gamma}\Big(x,\xi,\frac{u(x_\alpha+\e\xi_\alpha,x_d+\frac{\e}{\gamma}\xi_d)-u(x)}{\e}\Big) \d x\d\xi
\end{equation}
where $\Omega_{\e,\frac{\e}{\gamma}}(\xi):=\{x\in\Omega : x+(\e\xi_\alpha,\frac{\e}{\gamma}\xi_d)\in\Omega\}$.
We will often work with formulation \eqref{eq:intro-rescaled}, which is more convenient for the analysis.  

In Section \ref{sec:growth} we identify and discuss the appropriate assumptions on the nonlocal densities $f_\e$;
in Section \ref{preliminary} we prove some preliminary results needed to perform our general asymptotic analysis (see Section \ref{sec:Integral_R}).

The main goal of this paper is to establish a general $\Gamma$-compactness and integral representation result for the energies \eqref{eq:intro-nltf}, and to apply it to the case of periodic homogenization.
More precisely, in Theorem \ref{thm:integral-rep} we show that the family of functionals $\{F_{\e,\gamma}\}_{\e, \gamma\in(0,1)}$, up to subsequences, $\Gamma$-converges with respect to the convergence of the rescaled functions $u$ (cf.\ Definition \ref{def:conv}), and we provide an \emph{integral representation} of its cluster points which are of the form
$$
\int_\omega f_0(x_\alpha,\nabla_\alpha v(x_\alpha))\d x_\alpha,
\quad
v\in W^{1,p}(\omega;\R^m),
$$
for a suitable effective density $f_0:\omega\x\R^{m\x(d-1)}\to[0,+\infty)$ defined on the cross-section.
We stress that this result holds across all regimes of the parameters $\e$ and $\gamma$.
This limit result is complemented by the convergence of the associated minimum problems under Dirichlet-type boundary conditions imposed on an $\e$-neighborhood of the lateral boundary of $\Omega^\gamma$ (cf.\ Propositions \ref{DBC-Gamma-conv} and \ref{DBCminpbs}).

In Sections \ref{sec:hom_interaction} and \ref{sec:hom_separation}, we address the case of periodic homogenization.
In Section \ref{sec:hom_interaction}, we consider the regimes in which the thickness and the nonlocality scales are proportional, namely when $\e=\delta\gamma$ for 
$\delta\in(0,+\infty)$.  Assuming that the nonlocal densities are obtained by scaling a periodic profile in the planar components,  we prove a homogenization result (cf.\ Theorem \ref{AsyHom}), showing that $\{F_{\e,\gamma}\}_{\e, \gamma\in(0,1)}$ $\Gamma$-converges to a local \emph{homogeneous} functional with effective density $f_{\rm hom}^\delta$ characterized by the asymptotic formula \eqref{eq:asyhom} {and that depends on the ratio $\delta$}. 
 In the convex case, this expression reduces to a \emph{cell formula} (Theorem \ref{thm:cell}).

Under additional assumptions in the vertical variables, Section \ref{sec:hom_separation} is devoted to  the critical regimes in which one of the two modeling parameters, $\e$ and $\gamma$, is asymptotically negligible with respect to the other. 
In these regimes, we prove that a phenomenon of \emph{separation of scales} occurs, namely, along sequences such that $\e=o(\gamma)$,
$$
\Glim_{\e,\gamma\to 0}F_{\e,\gamma} = \Glim_{\gamma\to0} \big( \Glim_{\e\to0} F_{\e,\gamma} \big).
$$
This is established in Proposition \ref{prop:delta0gen}, and it is proved by comparing the limit density $f_0$ with the one obtained by first performing  a nonlocal-to-local limit and then a dimension reduction, that is $2Q\overline{f_{\rm bulk}}$, showing that they coincide.
An analogous result holds  in the regime $\gamma=o(\e)$ (cf.\ Proposition \ref{prop:deltainfinity}) for planar interaction potentials.

Finally, we conclude the paper by providing some examples.
In particular, Example \ref{ex:rotation} shows  that, in general, the nonlocal-to-local limit and the dimension reduction procedure do not commute.
   
\medskip

As discussed above, the local theory of thin films is by now well understood and has been extensively developed.
The first application of $\Gamma$-convergence to dimension reduction dates back to  \cite{ABP91}, which treats three-dimensional elastic strings, while a general framework was established in \cite{ABP94}.
Nonlinear elastic membrane models were derived via $\Gamma$-convergence in \cite{LDR95}.
A general $\Gamma$-compactness approach based on localization was introduced in \cite{BFF00}, to treat non-homogeneous thin films, including the case of varying profile.
The interplay between non-homogeneity and fast oscillations of the profile was further investigated in \cite{AB01}, where a preliminary homogenization result for domains with oscillating boundaries is established in order to address separation-of-scales effects.
We also mention \cite{A04,ABZ07}, where multiscale effects are analyzed in combination with dimension reduction.
The case of energies with linear growth  was first addressed in \cite{BZZ08}.

The variational analysis of nonlocal-to-local limits for convolution-type functionals was initiated in the pioneering works \cite{G98,AB98}.
A fundamental contribution was given by Bourgain, Brezis, and Mironescu in \cite{BBM01}, where a pointwise convergence result was established for a broad class of convolution energies, later strengthened in terms of 
$\Gamma$-convergence in \cite{P04}.
In this context, the optimality of the assumptions ensuring locality of the limit has been recently investigated in \cite{DDFP26,GS26}.
General integral representation results for discrete and continuous nonlocal energies with superlinear growth were obtained in \cite{AC04} and \cite{AABPT23}, respectively.
Recent contributions on homogenization of nonlocal functionals with multiple scales include \cite{AGL25,F25,B25}.
Continuous nonlocal energies also play a central role in the theory of peridynamics;
for results in this direction, see \cite{MD15,BCMC20,CKS25} and the references therein.

The study of dimension reduction for nonlocal energies  has so far mainly focused on discrete models; see, e.g.,  \cite{ABC08,S08,BDE23}.
A recent systematic treatment of nonlocal fractional models for thin films is given  in \cite{BPS26}.
We also mention \cite{EMS26} for dimension reduction results involving energies defined on nonlocal gradients.

\section{Setting of the problem}

In this section, we introduce the framework and state the assumptions under which our main results will be formulated and proved. We begin by fixing the notation that will be used throughout the paper.

\subsection{Notation}\label{sec:notation}

We fix $d,m\in\N$, with $d\ge2$ and $m\ge1$, and let $p\in(1,\infty)$.

Given $\xi\in\R^d$, we denote by $\xi_i$, for $i=1,\dots,d$, its $i$-th component. We denote as $\xi_\alpha\in\R^{d-1}$ the vector of the first $d-1$ components of $\xi$, namely $\xi_\alpha=(\xi_1,\dots,\xi_{d-1})$. We also write $\xia\in\R^d$ for the vector $(\xi_1,\dots,\xi_{d-1},0)$, while $\xid\in\R^d$ stands for $(0,\dots,0,\xi_d)$. Unless otherwise specified, $|\cdot|$ denotes the Euclidean norm when applied to vectors.

Given $r>0$ and $x\in\R^d$, we define
$B^d_r(x)=\{y\in\R^d:|y-x|<r\}$ and use the shorthand $B^d_r$ when $x=0$. When clear from the context, we denote the $(d-1)$-dimensional ball simply by
$B_r(x_\alpha)=\{y_\alpha\in\R^{d-1}:|y_\alpha-x_\alpha|<r\}$, writing $B_r$ when $x_\alpha=0$. We also set $I=(-1,1)\subset\R$ and $rI=(-r,r)$.
We define the cylinder with circular base of radius $r$, height $h>0$, and center $x$ by
$C_r^h(x)=B_r(x_\alpha)\times(hI+x_d)$.
If $x=0$, we simply write $C_r^h$. If $h=r$, we use the shorthands $C_r(x)$ and $C_r$.

When applied to sets, $|\cdot|$ denotes the Lebesgue measure. We denote by $\mathcal{H}^k$ the $k$-dimensional Hausdorff measure in $\R^d$, for $k\in[0,d]$. We write $\omega_d=|B_1^d|$ for the volume of the unit ball in $\R^d$; in particular, $\mathcal{H}^{d-1}(\partial B_1^d)=d\omega_d$. Given a set $\omega\subset\R^{d-1}$, we denote by $\A(\omega)$ the family of open subsets of $\omega$ and by $\Ar(\omega)$ the subfamily of open subsets with Lipschitz boundary.

For $a,b\in\R$, we write $a\vee b:=\max\{a,b\}$ and $a\wedge b:=\min\{a,b\}$.
By the expression $a\lesssim b$ we mean that there exists a constant $C>0$, independent of $a$ and $b$, such that $a\le Cb$. Similarly for $a\gtrsim b$.

\subsection{General nonlocal functionals on thin films}

Let $\omega\subset\R^{d-1}$ be an open, bounded set with Lipschitz boundary.
Let $\e, \gamma\in(0,1)$ denote the scales of the \emph{nonlocality} and  the \emph{thickness} of the domain, respectively. Throughout the rest of the paper, we assume that $\gamma$ depends on $\e$\ie $\gamma =\gamma(\e)$.

Set $\Omega^\gamma:=\omega\x\gamma I$.
For every $\e\in(0,1)$, let
$$
f_\e:\Omega^\gamma\x\R^d\x\R^m\to[0,+\infty)
$$
be a Borel function.
We consider the functionals $F_{\e,\gamma}:L^p(\Omega^\gamma;\R^m)\to[0,+\infty]$ defined by
\begin{equation}\label{eq:functionals}
F_{\e,\gamma}(v):=\Big(\frac{\e}{\gamma^2}\vee\frac{1}{\gamma}\Big)\int_{\R^d}\int_{\Omega^\gamma_\e(\xi)} f_\e\Big(x,\xi,\frac{v(x+\e\xi)-v(x)}{\e}\Big)\d x \d\xi
\end{equation}
with $\Omega^\gamma_\e(\xi):=\{x\in\Omega^\gamma: x+\e\xi\in\Omega^\gamma\}$.
We define the localized versions of $F_{\e,\gamma}$ as the functionals $F_{\e,\gamma}:L^p(\Omega^\gamma;\R^m)\x\mathcal{A}(\omega)\to[0,+\infty]$ given by
\begin{equation}\label{loc-functionals}
F_{\e,\gamma}(v, A):=\Big(\frac{\e}{\gamma^2}\vee\frac{1}{\gamma}\Big)\int_{\R^d}\int_{(A\x\gamma I)\e(\xi)} f_\e\Big(x,\xi, \frac{v(x+\e\xi)-v(x)}{\e}\Big)\d x\d\xi.
\end{equation}

As is customary in dimension-reduction problems, since the domain $\Omega^\gamma$ varies with $\e$, we need to introduce a suitable notion of convergence to perform the asymptotic analysis.

\begin{definition}[Dimension-reduction convergence]\label{def:conv}{\rm 
Let $v_\e\in L^p(\Omega^\gamma;\R^m)$ and let $v\in L^p(\omega;\R^m)$.
Let $u_\e\in L^p(\Omega;\R^m)$ be defined as $u_\e(x):=v_\e(x_\alpha,\gamma x_d)$.
We say that $v_\e$ converge to $v$ \emph{in the sense of dimension reduction} if 
$u_\e$ converges  strongly  to $u$ in $L^p(\Omega;\R^m)$, $u$ is constant in $x_d$, and $v(x_\alpha)\equiv u(x)$.
}
\end{definition}

\subsection{Growth conditions}
\label{sec:growth}
The main assumptions on the energy densities $f_\e$ introduced above are that they satisfy $p$-growth conditions and are controlled, in the interaction variable $\xi$, by convolution kernels with finite $p$-moments.
To this end, we assume that there exist two positive constants $C>c>0$, a family of nonnegative measurable functions $\psi_\e:\R^d\to[0,+\infty)$, and $\rho\in L^1(\R^d;[0,+\infty))$ such that 
\begin{equation}\label{eq:GC}
c(\psi_\e(\xi)|z|^p- \rho(\xi))\le f_\e(x,\xi,z) \le C(\psi_\e(\xi)|z|^p+ \rho(\xi)),
\end{equation}
for almost every $x\in\Omega^\gamma$, $\xi\in\R^d$, and every $z\in\R^m$, where $\psi_\e$ satisfies the following hypotheses: there exist two positive constants $r_0, c_0>0$ such that
\begin{equation}\label{eq:h0}
\psi_\e(\xi) \ge c_0, \quad \text{ for a.e. } \xi\in C_{r_0}
\tag{H0}
\end{equation}
with $C_{r_0}=B_{r_0}\times (-r_0, r_0)$ (cf.\ Section \ref{sec:notation}),
\begin{equation}\label{eq:h1}
\limsup_{\e\to0} \int_{\mathbb{R}^d}\psi_\e(\xi)|\xi|^p\d\xi <+\infty,
\tag{H1}
\end{equation}
and, for any $\eta>0$, there exists $r_\eta>0$ such that
\begin{equation}\label{eq:h2}
\limsup_{\e\to0}\int_{(\R^{d-1}\setminus B_{r_\eta})\x\R}\psi_\e(\xi)|\xi|^p\d\xi <\eta.
\tag{H2}
\end{equation}
We further assume that
\begin{equation}\label{eq:h3}
\limsup_{\e\to0}\Big(\sup_{|\xi_d|<2}\int_{\mathbb{R}^{d-1}}\psi_\e(\xi_\alpha,\xi_d)|(\xi_\alpha,\xi_d)|^p+\rho(\xi_\alpha,\xi_d)\d\xi_\alpha\Big)<+\infty
\tag{H3}
\end{equation}
and, for any $\eta>0$, there exists $r_\eta>0$ as above such that
\begin{equation}\label{eq:h4}
\limsup_{\e\to0}\Big(\sup_{|\xi_d|<2}\int_{\mathbb{R}^{d-1}\setminus B_{r_\eta}}\psi_\e(\xi_\alpha,\xi_d)|(\xi_\alpha,\xi_d)|^p+\rho(\xi_\alpha,\xi_d)\d\xi_\alpha\Big)<\eta.
\tag{H4}
\end{equation}

 We briefly comment on conditions \eqref{eq:h1}--\eqref{eq:h4}.
Assumptions \eqref{eq:h1} and \eqref{eq:h2} are only needed in regimes where $\e\lesssim\gamma$, whereas \eqref{eq:h3} and \eqref{eq:h4}, which are specific to the thin-film setting, become relevant when $\e/\gamma$ is unbounded. These latter conditions ensure that the energy scales at most with the amplitude of the vertical interactions when this is small; their role will be discussed in more detail in Section \ref{sec:scaling}.
Since imposing the full set of assumptions is not particularly restrictive (cf.\ Example \ref{ex} below), we will, for simplicity, work under \eqref{eq:h0}–\eqref{eq:h4} in all regimes.

\begin{remark}\label{Comparision}{\rm
Our assumptions \eqref{eq:h0}--\eqref{eq:h2} are slightly less general than those considered in \cite[Section 2.3]{AABPT23}. In particular, using the notation of \cite{AABPT23}, we assume that $\psi_{\e,1}= \psi_{\e,2}=\psi_{\e}$ and $\rho_{\e,1}=\rho_{\e,2}= \rho$, which implies that assumption (2.7) in \cite[Section 2.3]{AABPT23} is no longer required. Nevertheless, the analysis carried out in the sequel can also be extended to the more general setting of \cite{AABPT23}.

We recall that \eqref{eq:h0} is a \emph{coercivity} assumption, \eqref{eq:h1} and \eqref{eq:h3} ensure that the limit functionals are well defined in Sobolev spaces, \eqref{eq:h2} and  \eqref{eq:h4} are \emph{locality} assumptions which guarantee  the integral structure of the limit energies.

}
\end{remark}

\begin{example}\label{ex}{\rm
Let $\psi\in C^\infty_c(B_1)$ be such that $0\le\psi\le1$ and $\psi(0)=1$, and define $\psi_\e(\xi)\equiv\psi(\xi)|\xi|^{-p}$.
Then conditions \eqref{eq:h0}--\eqref{eq:h4} are trivially satisfied.
This corresponds to the case of difference quotients weighted by a convolution kernel.

Another natural choice is to ``split" the action of the kernel into planar and vertical interactions by separating the variables.
Specifically,  let 
$$\psi_\e(\xi)\equiv\psi_\alpha(\xi_\alpha)\psi_d(\xi_d)|\xi|^{-p}$$ 
with $\psi_\alpha\in L^1(\R^{d-1})$ and $\psi_d\in L^1(\R)\cap L^\infty(2I)$. In this case,
conditions \eqref{eq:h1}--\eqref{eq:h4} readily follow from Fubini's Theorem.

A further simple example is given by the kernel $\psi_\e\equiv\chi_{C_r}$, which trivially satisfies  \eqref{eq:h0}--\eqref{eq:h4} for every $r>0$.
}
\end{example}

\subsection{Energies of convolution-type}

In view of \eqref{eq:GC}, we introduce a class of homogeneous, convex functionals which will serve as comparison energies.

\begin{definition}[Purely-convolution type functionals]\label{convfun}
Given a measurable function $a:\mathbb{R}^d\to[0,+\infty)$, we set
\begin{equation}\label{conv-functionals}
G_{\e,\gamma}[a](v) := \Big(\frac{\e}{\gamma^2}\vee\frac{1}{\gamma}\Big)\int_{\mathbb{R}^d}a(\xi)\int_{\Omega^\gamma_{\e}(\xi)}\Big| \frac{v(x+\e\xi)-v(x)}{\e}\Big|^p \d x\d\xi.
\end{equation}
In the special case  $a(\xi)=\chi_{C_r}(\xi)$, we simply write $G_{\e,\gamma}^r$ instead of $G_{\e,\gamma}[\chi_{C_r}]$.
We also denote the corresponding localized functionals as $G_{\e,\gamma}[a](\cdot, A)$ and $G_{\e,\gamma}^r(\cdot,A)$, respectively,  in analogy with $F_{\e,\gamma}(\cdot,A)$ defined in \eqref{loc-functionals}.
\end{definition}

For all sufficiently small $\e$,  by the growth conditions \eqref{eq:GC}, the localized functionals satisfy, for every $v\in L^p(\Omega^\gamma;\R^m)$,
\begin{equation}\label{growth-cond-new}
c\Big(G_{\e,\gamma}[\psi_{\e}](v,A)-c'|A|\Big) \le F_{\e,\gamma}(v,A) \le C\Big(G_{\e,\gamma}[\psi_{\e}](v,A)+c'|A|\Big),
\end{equation}
for a suitable constant $c'>0$ depending on $\rho$.
By \eqref{eq:h0}, the inequality above implies
\begin{equation}\label{growth-cond}
C_1\Big( G_{\e,\gamma}^{r_0}(v,A)-|A|\Big) \le F_{\e,\gamma}(v,A) \le C_2 \Big(G_{\e,\gamma}[\psi_\e](v,A)+|A|\Big)
\end{equation}
for some constants $C_2>C_1>0$.

\subsection{On the scaling}
\label{sec:scaling}

In this section, we rigorously show that the rescaling factor $(\frac{\e}{\gamma^2}\vee\frac{1}{\gamma})$ is the appropriate one under the growth conditions of Section \ref{sec:growth}.

In view of \eqref{growth-cond} and Definition \ref{def:conv}, given $\psi_\e$ satisfying \eqref{eq:h0}--\eqref{eq:h4}, it suffices to estimate $G_{\e,\gamma}[\psi_\e](v)$ for any $v\in W^{1,p}(\Omega^\gamma;\R^m)$ with $\partial_d v=0$ almost everywhere.
By the Sobolev Extension Theorem, we may assume without loss of generality that $v\in W^{1,p}(\R^d;\R^m)$.
Since $\Omega^\gamma_\e(\xi)=\omega_\e(\xi_\alpha)\x(\gamma I)_\e(\xi_d)$, using that $|(\gamma I)_\e(\xi_d)|=(2\gamma-\e|\xi_d|)_+$, and that $v$ is constant in the $d$-th variable, we obtain
\begin{align*}
G_{\e,\gamma}[\psi_\e](v) &\le \Big(\frac{\e}{\gamma^2}\vee\frac{1}{\gamma}\Big)\int_{\R^{d-1}\x\frac{2\gamma}{\e}I}\psi_\e(\xi)(2\gamma-\e|\xi_d|)\int_\omega\Big|\frac{v(\xa+\e\xia)-v(\xa)}{\e}\Big|^p\d x_\alpha\d\xi.
\end{align*}
By the Lipschitz continuity of the translation operator on Sobolev functions, we infer that
$$
\int_\omega|v(\xa+\e\xia)-v(\xa)|^p\d x_\alpha\le C\e^p\|\nabla_\alpha v\|^p_{L^p(\omega;\R^m)}|\xi_\alpha|^p
$$
where the constant $C>0$ depends on the Sobolev extension.
We thus obtain
\begin{align*}
G_{\e,\gamma}[\psi_\e](v) &\le 2C\|\nabla_\alpha v\|^p_{L^p(\omega;\R^m)}\Big(\frac{\e}{\gamma}\vee1\Big)\int_{\R^{d-1}\x(\frac{2\gamma}{\e}I)}\psi_\e(\xi)|\xi_\alpha|^p\d\xi.
\end{align*}
The right-hand side is uniformly bounded by \eqref{eq:h1} when $\e\le\gamma$; in fact, this holds for any uniformly bounded ratio $\e/\gamma$.
When $\e>\gamma$, instead, using \eqref{eq:h3} we obtain  
\begin{equation}\label{eq:scal-ub}
\begin{split}
G_{\e,\gamma}[\psi_\e](v) &\le 2C\|\nabla_\alpha v\|^p_{L^p(\omega;\R^m)} \frac{\e}{\gamma}\int_{-\frac{2\gamma}{\e}}^{\frac{2\gamma}{\e}}\int_{\R^{d-1}}\psi_\e(\xi_\alpha,\xi_d)|\xi_\alpha|^p\d\xi_\alpha\d\xi_d \\
&\le 2 C\|\nabla_\alpha v\|^p_{L^p(\omega;\R^m)} \frac{\e}{\gamma}\int_{-\frac{2\gamma}{\e}}^{\frac{2\gamma}{\e}}C'\d\xi_d \le 8 CC' \|\nabla_\alpha v\|^p_{L^p(\omega;\R^m)}
\end{split}
\end{equation}
where $C'>0$ given by \eqref{eq:h3}.
This shows that the rescaling factor $(\frac{\e}{\gamma^2}\vee\frac{1}{\gamma})$ is the correct one with respect to the growth conditions in Section \ref{sec:growth}.

\begin{remark}[The role of \eqref{eq:h3} and \eqref{eq:h4}]\label{rmk:scalings}{\rm
We would like to emphasize that the scaling of an integral nonlocal energy on thin domains is strongly influenced by the integrability of the kernel in the vertical variable.
Indeed, while in the full-dimensional case \cite{AABPT23} such energies scale like the volume of the domain for every kernel with finite $p$-moments, in the nonlocal thin-film setting it is not possible to identify a single scaling that is valid for all such kernels.
This becomes particularly evident in the regimes where the thickness is smaller than the interaction horizon\ie when $\e/\gamma\to+\infty$.

This can be seen, for instance, by considering $\psi_\e(\xi)\equiv\psi(\xi):=\chi_{C_1}(\xi)|\xi_d|^{-\beta}$ with $\beta\in(0,1)$, which has finite $p$-moments. Testing the associated interaction energy on functions depending only on $x_\alpha$, namely $v(x)\equiv v(x_\alpha)$ with $v\in C^2_c(\omega)$, and arguing as above, we obtain
\begin{align*}
s(\e,\gamma) &:= \int_{\R^d}\psi(\xi)\int_{\Omega^\gamma_{\e}(\xi)}\Big|\frac{v(x+\e\xi_\alpha)-v(x)}{\e}\Big|^p\d x\d\xi \\
&\: = \int_{C_1}|\xi_d|^{-\beta}\int_{\omega\x (\gamma I)_\e(\xi_d)}|\nabla_\alpha v(x_\alpha)\cdot\xi_\alpha|^p\d x\d\xi+o(1) \\
&\: = \int_{-(\frac{2\gamma}{\e}\wedge1)}^{(\frac{2\gamma}{\e}\wedge1)}|\xi_d|^{-\beta}(2\gamma-\e|\xi_d|)\int_{B_1}\int_\omega|\nabla_\alpha v(x_\alpha)\cdot\xi_\alpha|^p\d x_\alpha\d\xi_\alpha\d\xi_d+o(1).
\end{align*}
By a standard rotation argument, the leading-order contribution of the energy scales like
$$
s(\e,\gamma) \sim c\|\nabla_\alpha v\|_{L^p(\omega)}^p\int_{-(\frac{2\gamma}{\e}\wedge1)}^{(\frac{2\gamma}{\e}\wedge1)}(2\gamma-\e|\xi_d|)|\xi_d|^{-\beta}\d\xi_d,
$$
for a suitable constant $c>0$ depending only on $p$ and $d$.
From this we infer that $s(\e,\gamma)\sim\gamma(\frac{\e}{\gamma}\vee1)^{\beta-1}$.

This computation confirms that it is not possible to identify a single scaling for convolution-type energies within the class of kernels with finite $p$-moments, since the behavior depends on the integrability near zero in the vertical variable.
Since the prototypical example of a kernel $\psi$ is given by the profile of a mollifier, we adopt the rescaling (and hence the growth conditions) that include such cases, namely kernels with finite $p$-moments that remain uniformly bounded (at least 
with respect to vertical interactions) near zero.
This motivates assumption \eqref{eq:h3}, and in turn its \emph{local} counterpart \eqref{eq:h4}.}
\end{remark}

We conclude this discussion by observing that it may also be of interest to consider alternative scalings, suitably paired with corresponding growth conditions.
For instance, as shown in the previous remark, to capture kernels with vertical behavior of the form $|\xi_d|^{-\beta}$, the corresponding rescaling would be $\frac{1}{\gamma}(\frac{\e}{\gamma}\vee1)^{1-\beta}$.
The framework is sufficiently rich to include highly degenerate kernels as well, a prototypical example being given by
$$
\psi_\e(\xi)=\e^{\e p}|\xi|^{-d-(1-\e)p},
$$
which corresponds to the case of \emph{Gagliardo seminorms}.
By computations analogous to those in Remark \ref{rmk:scalings}, the correct rescaling factor in this case is $\gamma^{1+\e p}$; see the recent work \cite{BPS26} for the case $p=2$.
Finally, note that in all these cases the scaling factor behaves like $1/\gamma$ in the regimes $\e\lesssim\gamma$, that is, when the thickness is at least comparable to the nonlocal interaction horizon.

\subsection{Rescaled functionals}\label{sec:rescaled}
We conclude this section by introducing the class of rescaled energies that will be convenient in technical arguments and computations.

Recalling that $\Omega=\omega\x I$, we define the functionals $\mathcal{F}_{\e,\gamma}:L^p(\Omega;\R^m)\to[0,+\infty]$ as
\begin{equation}\label{eq:rescaled}
\mathcal{F}_{\e,\gamma}(u) := \Big(\frac{\e}{\gamma}\vee 1\Big)\int_{\R^d}\int_{\Omega_{\e,\frac{\e}{\gamma}}(\xi)}\tilde f_\e(x,\xi,D_{\e,\frac{\e}{\gamma}}^\xi u(x))\d x \d\xi,
\end{equation}
where $\tilde f_\e:\Omega\x\R^d\x\R^m\to[0,+\infty)$ are Borel functions given by $\tilde f_\e(x)=f_\e(x_\alpha,\gamma x_d)$, and where
$$
D_{\e,\frac{\e}{\gamma}}^\xi u(x) := \frac{u(x+\e\xia+\frac{\e}{\gamma}\xid)-u(x)}{\e}
\quad \text{and} \quad
\Omega_{\e,\frac{\e}{\gamma}}(\xi) := \{x\in\Omega : x+\e\xia+\tfrac{\e}{\gamma}\xid\in\Omega\}.
$$
When convenient, we use the notation $I_\frac{\e}{\gamma}(\xi_d):=I\cap(I+\frac{\e}{\gamma} \xi_d)$ and $\omega_\e(\xi_\alpha):=\omega\cap(\omega+\e\xi_\alpha)$.
With this notation, $\Omega_{\e,\frac{\e}{\gamma}}(\xi)=\omega_\e(\xi_\alpha)\times I_\frac{\e}{\gamma}(\xi_d)$.
Notice that, since the growth conditions in Section \ref{sec:growth} do not depend on the space variable, the functions $\tilde f_\e$ also satisfy \eqref{eq:GC} and \eqref{eq:h0}--\eqref{eq:h4}.

\begin{remark}{\rm
It is worth noticing that, by the change of variables $x'=(x_\alpha,\frac{x_d}{\gamma})$ and defining $u(x'):=v(x'_\alpha,\gamma x'_d)$, we immediately infer that
$$
F_{\e,\gamma}(v) = \mathcal{F}_{\e,\gamma}(u).
$$
Moreover, by Definition \ref{def:conv}, $v_\e$ converge to $v$ in the sense dimension reduction if and only if $u_\e\to u$ strongly in $L^p$ and $u$ is constant with respect to $x_d$.
Hence, the asymptotic analysis of $F_{\e,\gamma}$ with respect to the convergence in Definition \ref{def:conv} coincides with that of $\F_{\e,\gamma}$ with respect to the strong $L^p$ convergence.
}
\end{remark}

Similarly to \eqref{eq:rescaled}, we set 
\begin{equation}\label{eq:rescaled-conv}
\G_{\e,\gamma}[a](u) := \Big(\frac{\e}{\gamma}\vee1\Big)\int_{\mathbb{R}^d}a(\xi)\int_{\Omega_{\e,\frac{\e}{\gamma}}(\xi)}|D_{\e,\frac{\e}{\gamma}}^\xi u(x)|^p \d x\,\d\xi\,,
\end{equation}
as the rescaled counterparts of the purely-convolution energies introduced in \eqref{conv-functionals} and $\G_{\e,\gamma}[a](u, A)$ the corresponding localized rescaled functionals.

In what follows, it will be useful to employ two alternative but equivalent formulations of the energies $\G_{\e,\gamma}^r$.
The first one is obtained by the change of variables $y=x+\e\xia+\frac{\e}{\gamma}\xid$\ie
\begin{equation}\label{eq:conv-xy}
\G_{\e,\gamma}^r(u,A)=\frac{\e\vee\gamma}{\e^d} \iint\limits_{\substack{(A\x I)\x(A\x I) \\ y-x\in C_{\e r}^{\sfrac{\e r}{\gamma}}}}\Big|\frac{u(y)-u(x)}{\e}\Big|^p \d y\, \d x.
\end{equation}
The second is obtained via the change of variables $z=\e\xia+\frac{\e}{\gamma}\xid$, which yields
\begin{equation}\label{eq:conv-z}
\G_{\e,\gamma}^r(u,A)=\frac{\e\vee\gamma}{\e^d} \int_{C_{\e r}^{(\sfrac{\e r}{\gamma})\wedge 2}}\int_{(A\x I)_1(z)}\Big|\frac{u(x+z)-u(x)}{\e}\Big|^p \d x\, \d z.
\end{equation}
We emphasize that, in the second formulation, the range of admissible vertical interactions is made explicit.
Indeed, if $|z_d|>2$, then $(A\x I)_1(z)$ is empty.

For the general nonlinear functionals \eqref{eq:functionals}, however, we adopt the representation in terms of interactions $\xi$ at unit scale, since this formulation better highlights the multiscale nature of the problem.

\section{ Some preliminary results}\label{preliminary}
In this section we present the functional-analytic tools needed in the sequel.
We will mainly work with the rescaled functionals defined in Section \ref{sec:rescaled}.

We start by proving a convergence result for the reference energies $\G_{\e,\gamma}^r$ restricted to the class of functions that are constant in the $x_d$-variable.
This is an immediate consequence of the  full-dimensional result \cite[Theorem 3.1]{AABPT23} (see also \cite[Theorem 8]{P04}).

\begin{proposition}\label{prop:GlimG}
Let $\omega\subset\R^{d-1}$ be an open, bounded set with Lipschitz boundary and assume that $\e/\gamma\to\delta\in[0,+\infty]$ as $\e\to0$.
Then, for every $v\in W^{1,p}(\omega;\R^m)$, it holds that
$$
\Glim_{\e\to0} \G_{\e,\gamma}^r(v)=\theta(\delta,r) \int_{B_r}\int_\omega|\nabla_\alpha v(x_\alpha)\xi_\alpha|^p\d x_\alpha\d\xi_\alpha
$$
with respect to strong $L^p(\omega)$-convergence, where $\theta$ is defined by
\begin{equation*}
\theta(\delta,r):=
\begin{cases}
\displaystyle{(\delta\vee1)\Big(r\wedge\frac{2}{\delta}\Big)\Big(4-\delta\Big(r\wedge\frac{2}{\delta}\Big)\Big)}\,, &\delta\in[0,+\infty) \\
4 & \delta=+\infty.
\end{cases}
\end{equation*}
\end{proposition}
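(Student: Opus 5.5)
The plan is to reduce the statement to the full-dimensional nonlocal-to-local result \cite[Theorem 3.1]{AABPT23} (equivalently \cite[Theorem 8]{P04}) applied on the cross-section $\omega\subset\R^{d-1}$, with a vertical weight that can be computed explicitly. For $v=v(x_\alpha)$, constant in $x_d$, the difference quotient $D^\xi_{\e,\frac{\e}{\gamma}}v(x)$ depends only on $x_\alpha$ and $\xi_\alpha$. Moreover $\Omega_{\e,\frac{\e}{\gamma}}(\xi)=\omega_\e(\xi_\alpha)\x I_{\frac{\e}{\gamma}}(\xi_d)$ with $|I_{\frac{\e}{\gamma}}(\xi_d)|=(2-\frac{\e}{\gamma}|\xi_d|)_+$. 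Fubini then gives the exact identity
$$
\G^r_{\e,\gamma}(v)=\theta_\e\int_{B_r}\int_{\omega_\e(\xi_\alpha)}\Big|\frac{v(x_\alpha+\e\xi_\alpha)-v(x_\alpha)}{\e}\Big|^p\d x_\alpha\d\xi_\alpha,
\qquad
\theta_\e:=\Big(\frac{\e}{\gamma}\vee1\Big)\int_{-r}^{r}\Big(2-\frac{\e}{\gamma}|\xi_d|\Big)_+\d\xi_d .
$$
So on the class of $x_d$-independent functions, $\G^r_{\e,\gamma}$ equals a scalar $\theta_\e$ times the standard $(d-1)$-dimensional convolution functional with kernel $\chi_{B_r}$.

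Next I would compute $\lim\theta_\e$. With $\delta_\e=\e/\gamma$, the integral equals $2s(2-\delta_\e s/2)\cdot$ with $s=r\wedge\frac{2}{\delta_\e}$; explicitly it is $s(4-\delta_\e s)$. Hence $\theta_\e=(\delta_\e\vee1)s(4-\delta_\e s)\to\theta(\delta,r)$ for $\delta<\infty$. When $\delta_\e\to\infty$, eventually $s=2/\delta_\e$, so $\theta_\e=\delta_\e\cdot\frac{2}{\delta_\e}\cdot 2=4$. This matches the formula in the statement.

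Finally, I would invoke \cite[Theorem 3.1]{AABPT23} in dimension $d-1$ with kernel $\chi_{B_r}$, which satisfies the growth and locality hypotheses there. It gives $\Gamma$-convergence in $L^p(\omega)$ of the bracketed functional to $\int_{B_r}\int_\omega|\nabla_\alpha v\,\xi_\alpha|^p$; a kernel supported in a ball gives exactly this density, with no relaxation, by convexity and homogeneity. Since $\theta_\e\to\theta(\delta,r)\in(0,\infty)$ and $\Gamma$-convergence is stable under multiplication by converging positive constants, the claim follows. The liminf uses $\theta_\e\ge\theta-\eta$ eventually, and the limsup uses the recovery sequence of the reduced problem.

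The main subtlety is the interpretation of the $\Gamma$-limit. If competitors $u_\e\in L^p(\Omega)$ are allowed to depend on $x_d$, the reduction above fails. In that case the liminf would require a slicing and averaging argument: apply Jensen in $x_d$ to compare with the vertical average, or use the coercivity of vertical interactions. I read the statement as restricting to $x_d$-constant functions, as the preceding sentence indicates, so the only real work is the identity and the computation of $\theta_\e$.
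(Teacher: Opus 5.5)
Your proposal is correct and is essentially the paper's argument: the paper states the result for $x_d$-independent functions and calls it an immediate consequence of \cite[Theorem 3.1]{AABPT23}, which is precisely your Fubini factorization of $\G^r_{\e,\gamma}(v)$ into $\theta_\e$ times the $(d-1)$-dimensional convolution energy with kernel $\chi_{B_r}$, combined with $\theta_\e\to\theta(\delta,r)\in(0,+\infty)$. Your caveat about $x_d$-dependent competitors is worth keeping, because the paper later uses this proposition to bound $F'$ from below, and your Jensen idea closes that case as well: the set $\{(x_d,y_d)\in I\x I : |y_d-x_d|<\frac{\e}{\gamma}r\}$ is symmetric, so Jensen applied jointly in $(x_d,\xi_d)$ gives $\G^r_{\e,\gamma}(u)\ge\theta_\e\int_{B_r}\int_{\omega_\e(\xi_\alpha)}\big|\frac{\bar u(x_\alpha+\e\xi_\alpha)-\bar u(x_\alpha)}{\e}\big|^p\d x_\alpha\d\xi_\alpha$, where $\bar u$ is a vertical average of $u$ with weight bounded by $1$, and hence $\bar u\to v$ in $L^p(\omega)$ whenever $u\to v$ in $L^p(\Omega)$.
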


\subsection{An extension result}

We now provide an extension result that we will use to control interactions near the boundary.
This is obtained by adapting the arguments in \cite[Theorem 4.1]{AABPT23}.
For completeness, we provide a proof, focusing mainly on the differences with respect to the full-dimensional case.

Here and in the sequel, in the localized functionals we make explicit the domain of the vertical variable whenever it differs from $I$.

\begin{lemma}\label{lem:ext}
Let $A\subset\R^{d-1}$ be an open, bounded set with Lipschitz boundary and let $r>0$.
Then there exist an open set $\tilde A\subset\R^{d-1}$ with $A\Subset \tilde A$, constants $C=C(A)>0$, $\tilde r=\tilde r(r,A)>0$, $\e_0=\e_0(r,A)>0$, and a function $\tilde u \in L^p(\tilde A\x3I;\R^m)$ such that $\tilde u=u$ a.e.\ in $A\x I$, 
$\|\tilde u\|_{L^p(\tilde A\x3I;\R^m)} \le C \|u\|_{L^p(A\x I;\R^m)}$ and
\begin{equation}\label{eq:ext-full}
\G_{\e,\gamma}^{\tilde r}(\tilde u,\tilde A\x3I)\le C \big(\|u\|^p_{L^p(A\x I;\R^m)}+ \G_{\e,\gamma}^r(u,A)\big),
\end{equation}
for every $\e\in(0,\e_0)$ and every $\gamma\in(0,1)$.
\end{lemma}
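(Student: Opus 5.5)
The extension is built in two independent steps: first in the vertical direction, from $A\x I$ to $A\x 3I$, and then in the planar directions, from $A\x 3I$ to $\tilde A\x 3I$. Each step is checked against the formulation \eqref{eq:conv-xy} of $\G^r_{\e,\gamma}$. The guiding principle is that a reflection is bi-Lipschitz. Consequently, if $x,y$ interact in the reflected region, so that $y-x\in C_{\e\tilde r}^{\e\tilde r/\gamma}$, then their preimages interact in the original domain within a cylinder $C_{\e r}^{\e r/\gamma}$, provided $\tilde r$ is small compared with $r$ and with the Lipschitz constant of the reflection. The key requirement is that the reflections act separately on planar and vertical coordinates. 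Otherwise the anisotropic shape of the cylinder, with planar radius $\e r$ and vertical radius $\e r/\gamma$, would be destroyed, and the estimate would not be uniform in $\gamma$.

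\emph{Vertical step.} Extend $u$ to $A\x3I$ by even reflection across $x_d=\pm1$, setting $\hat u(x_\alpha,x_d)=u(x_\alpha,2-x_d)$ for $x_d\in(1,3)$, and similarly below. This map is an isometry in $x_d$ and leaves $x_\alpha$ unchanged. Hence $\|\hat u\|_{L^p}\le 3^{1/p}\|u\|_{L^p}$ is immediate. For the energy, I will split the pairs $(x,y)$ with $y-x\in C_{\e r}^{\e r/\gamma}$ according to which of the three slabs contain $x$ and $y$.
\begin{itemize}
\item If both lie in the same slab, the pair is an isometric copy of an interacting pair in $A\x I$.
\item If they lie in adjacent slabs, say $x_d<1<y_d$, then $|(2-y_d)-x_d|\le|y_d-x_d|$. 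So the pair $(x,\sigma(y))$, with $\sigma$ the reflection, still interacts with the same $r$, and the Jacobian is $1$.
\item If they lie in the two outer slabs, this requires $|y_d-x_d|>2$. That can only happen when $\e r/\gamma>2$, and the pair then reflects to a pair in $A\x I$ with smaller vertical distance.
\end{itemize}
Each slab combination is counted a bounded number of times, which gives $\G^r_{\e,\gamma}(\hat u,A\x3I)\le 9\,\G^r_{\e,\gamma}(u,A)$. Here the prefactor $(\e\vee\gamma)/\e^d$ is unchanged. I note that this step works for every $\gamma$ with no restriction on $\e$.

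\emph{Planar step.} Here I follow \cite[Theorem 4.1]{AABPT23} applied in $\R^{d-1}$, treating $x_d\in3I$ as a parameter. Cover $\partial A$ by finitely many cylinders in which $A$ is the subgraph of a Lipschitz function $g$. In each cylinder, use the bi-Lipschitz reflection $\Phi(x_\alpha',t)=(x_\alpha',2g(x_\alpha')-t)$ acting on $x_\alpha$ only. Glue the local extensions with a partition of unity $\{\varphi_j\}$ depending only on $x_\alpha$, and let $\tilde A=A\cup\{\text{neighborhoods}\}$. Since $\Phi$ is $L$-bi-Lipschitz in $x_\alpha$ and the identity in $x_d$, an interacting pair with planar distance $<\e\tilde r$ maps to a pair with planar distance $<L\e\tilde r\le \e r$ and the same vertical distance. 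So I take $\tilde r=r/L'$ for a suitable $L'$, and this is uniform in $\gamma$. The new difficulty is the pairs that straddle $\partial A$, and pairs near the boundary whose reflected point falls outside $A$. Following \cite{AABPT23}, these are handled by choosing $\e_0$ so small that $\e\tilde r$ is below the size of the covering cylinders, and by using a triangle inequality through an intermediate point. For a straddling pair, comparing $u(\Phi(y))$ with $u(x)$ costs at most a doubled distance, absorbed by shrinking $\tilde r$.

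\emph{Partition-of-unity terms.} The cutoffs produce terms of the form $|\varphi_j(y_\alpha)-\varphi_j(x_\alpha)|^p|u|^p/\e^p\lesssim |y_\alpha-x_\alpha|^p\e^{-p}|u|^p\lesssim |u|^p$. Integrating, with prefactor $(\e\vee\gamma)/\e^d$, over $y$ in the cylinder of measure $\sim \e^{d-1}\cdot(\e r/\gamma\wedge 6)$ gives a bound $C\|u\|^p_{L^p}$. This bound is uniform in $\gamma$ precisely because $(\e\vee\gamma)(\e/\gamma\wedge 6)/\e\lesssim 1$; this is the origin of the $\|u\|^p$ term in \eqref{eq:ext-full}. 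I expect the main obstacle to be the bookkeeping in the planar step: checking that each covering region and reflection change $\tilde r$ only by a factor independent of $\gamma$, and that vertical interactions are untouched throughout. Treating the planar reflections as maps that do not act on $x_d$ is what should make this go through.
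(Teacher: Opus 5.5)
Your proposal is correct and follows essentially the paper's proof. It uses the same planar bi-Lipschitz reflections acting only on $x_\alpha$, the same even reflection across $x_d=\pm1$, and the same partition of unity depending only on $x_\alpha$, so interacting pairs stay in cylinders of the same anisotropic shape. The only differences are that you do the vertical reflection before the planar extension, which gives the same $\tilde u$ since the two reflections act on separate coordinates, and that you explicitly check the cut-off term is uniform in $\gamma$ via $(\e\vee\gamma)(\frac{\e}{\gamma}\wedge 6)/\e\lesssim 1$, a step the paper delegates to \cite{AABPT23}.
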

\begin{proof}
Let $\{U_i\}_{i=1}^n\subset\R^{d-1}$ be an open covering of $\partial A$, and let $H_i:Q\to U_i$ be bi-Lipschitz maps such that $\|\nabla H_i\|_{L^\infty(Q;U_i)},\|\nabla H_i^{-1}\|_{L^\infty(U_i;Q)}\le L$ and
$$
H_i(Q^+)=A\cap U_i, \quad H_i(Q^-)=U_i\setminus\bar{A}, \quad H_i(Q_0)=\partial A\cap U_i,
$$
where $Q:=(0,1)^{d-1}$, $Q^{\pm}:=\{x\in Q : \pm x_1>0\}$ and $Q_0:=\{x\in Q : x_1=0\}$.
Let $\{\varphi_i\}_{i=0}^n\subset C^\infty_0(\R^{d-1})$ be a partition of unity subordinate to  $\{U_i\}_{i=0}^n$, where $U_0$ is an open set covering $A\setminus \bigcup_{i=1}^n U_i$.
Define the reflection maps $R_i: (U_i\setminus \bar{A})\x I\to(U_i\cap A)\x I$ by
$$
R_i(x) = (H_i(-z_1,\dots,z_{d-1}),x_d)\,,\quad z_\alpha =H_i^{-1}(x_\alpha)\,. $$
Then $R_i$ are bi-Lipschitz with Lipschitz constants bounded by $L^2$.
Set $\tilde A=\bigcup_{i=0}^n U_i$.
For  $u\in L^p(A\x I;\R^m)$ define $u_i\in L^p(U_i\x3I;\R^m)$ and $\tilde u\in L^p(\tilde A\x3I;\R^m)$ by
$$
u_i(x):=
\begin{cases}
u(x) & x\in (U_i\cap A)\x I \\
u(R_i(x)) & x\in (U_i\setminus \bar{A})\x I\\
u(x_\alpha, \pm 2-x_d)& x_\alpha\in U_i\cap A\,, \  \pm x_d>1\\
u(R_i(x)_\alpha, \pm 2-x_d) &  x_\alpha\in U_i\setminus\bar{A}\,, \  \pm x_d>1\,,
\end{cases}
$$
and $\tilde u(x):=\sum_{i=0}^n \varphi_i(x_\alpha) u_i(x)$.
By construction, $\tilde u=u$ on $A\x I$, and it is straightforward to verify that $\|\tilde u\|_{L^p(\tilde A\x3I;\R^m)}\le C \|u\|_{L^p(A\x I;\R^m)}$ for some constant $C>0$ depending only on $A$.

Let $\tilde r=\frac{r}{1+2L^2}$. We first show that $\G_{\e,\gamma}^{\tilde r}(u_i,U_i) \le C \G_{\e,\gamma}^r(u,U_i\cap A)$ for every $i=1,\dots, n$.
Using \eqref{eq:conv-xy} and the fact that $R_i(y)_d=y_d$ and $R_i(x)_d=x_d$, we write
\begin{align}
\nonumber \G_{\e,\gamma}^{\tilde r}(u_i,U_i) &= \G_{\e,\gamma}^{\tilde r}(u,U_i\cap A) \\
\label{Ri}& \qquad + \frac{2(\e\vee\gamma)}{\e^{d}}\iint\limits_{\substack{((U_i\cap A)\x I)\times((U_i\setminus A)\x I) \\ |y_\alpha-x_\alpha|<\e \tilde r,\,|y_d-x_d|<\e \tilde r/\gamma}}\Big|\frac{u(y)-u(R_i(x))}{\e}\Big|^p\d y\, \d x \\ 
\label{RiRi}& \qquad + \frac{\e\vee\gamma}{\e^{d}}\iint\limits_{\substack{((U_i\setminus A)\x I)\times((U_i\setminus A)\x I) \\ |y_\alpha-x_\alpha|<\e \tilde r,\,|y_d-x_d|<\e \tilde r/\gamma}}\Big|\frac{u(R_i(y))-u(R_i(x))}{\e}\Big|^p\d y\, \d x\,. 
\end{align}
The Lipschitz continuity of $R_i$ and the choice of $\tilde r$ ensure that these terms can be controlled by $\G_{\e,\gamma}^r(u,U_i\cap A)$ after a change of variables.
Indeed, for any $x$ and $y$ in the domain of integration of \eqref{Ri}, and $z\in\big(\partial A\cap U_i\cap B_{\e r_1}(y_\alpha)\big)\x I$, by the triangle inequality and the fact that $z=R_i(z)$, we infer that $|y_\alpha-R_i(x)_\alpha|<\e r$.
Instead, if $x$ and $y$ are in the domain of integration of \eqref{RiRi} then $|R_i(y)_\alpha-R_i(x)_\alpha|<\e r$.
Hence, after suitable changes of variables, we may rewrite \eqref{Ri} and \eqref{RiRi} in terms of $\G_{\e,\gamma}^r(u,U_i\cap A)$ and obtain the claimed inequality, possibly enlarging the constant $C$.

Next, by Jensen's inequality, 
\begin{align*}
|\tilde u(y)-\tilde u(x)|^p &= \Big|\sum_{i=0}^n\varphi_i(y_\alpha) u_i(y_\alpha)-\varphi_i(x_\alpha) u_i(x_\alpha)\Big|^p \\
&\le n^{p-1}\sum_{i=0}^n\Big(\varphi_i(y_\alpha)| u_i(y_\alpha)- u_i(x_\alpha)|^p+| u_i(x_\alpha)|^p|\varphi_i(y_\alpha)-\varphi_i(x_\alpha)|^p\Big).
\end{align*}
Integrating and using the previous estimate on $u_i$, for $\e<\e_0$ (for some $\e_0=\e_0(r,A)$) we obtain
\begin{equation}\label{Estimate-tildeAh}
\G_{\e,\gamma}^{\tilde r}(\tilde u,\tilde A) \le C \big(\G_{\e,\gamma}^r(u,A)+\|u\|^p_{L^p(A\x I,\R^m)}\big)\,.
\end{equation}
See \cite[formula (4.7)]{AABPT23} for further details.

It remains to control vertical interactions. We write
\begin{align}
\nonumber \G_{\e,\gamma}^{\tilde r}(\tilde u,\tilde A\x3I) &= \G_{\e,\gamma}^{\tilde r}(\tilde u,\tilde A) \\
\label{RiMinus}& \qquad + \frac{2(\e\vee\gamma)}{\e^{d}}\iint\limits_{\substack{(\tilde A\x(3I\setminus I))\x(\tilde A\x I) \\ |y_\alpha-x_\alpha|< \e \tilde r,\, |y_d-x_d|<\e \tilde r/\gamma}}\Big|\frac{\tilde u(y)-\tilde u(x)}{\e}\Big|^p\d y \d x \\
\label{RiMinusMinus}& \qquad + \frac{\e\vee\gamma}{\e^{d}}\iint\limits_{\substack{(\tilde A\x(3I\setminus I))\x(\tilde A\x(3I\setminus I)) \\ |y_\alpha-x_\alpha|<\e \tilde r,\, |y_d-x_d|<\e \tilde r/\gamma}}\Big|\frac{\tilde u(y)-\tilde u(x)}{\e}\Big|^p\d y \d x\,,
\end{align}
Concerning the term in \eqref{RiMinus}, assume  without loss of generality that $x_d>1$.
Since $R_i(x)_d= 2-x_d$, we have 
$$
|R_i(x)_d-y_d|= |2-x_d- y_d|\le  |1-x_d| +  |1-y_d|= x_d-y_d< \e \tilde r/\gamma.
$$
For the term \eqref{RiMinusMinus}, either $x_d$ and $y_d$ belong to the same connected component of $3I\setminus I$\ie $x_dy_d>0$, in which case $|R_i(x)_d- R_i(y)_d|=|x_d-y_d|$ or  $x_dy_d<0$, in which case $|x_d-y_d|> 2 >|R_i(x)_d- R_i(y)_d|$.
In all cases, $|R_i(x)_d-R_i(y)_d|\le|x_d-y_d|<\e \tilde r/\gamma$.
Hence, by a change of variables, we can estimate the two terms in \eqref{RiMinus} and \eqref{RiMinusMinus} by $2L^2\G_{\e,\gamma}^{\tilde r}(\tilde u,\tilde A)$.

This yields \eqref{eq:ext-full} from \eqref{Estimate-tildeAh} and concludes the proof.
\end{proof}

\begin{remark}[Vertical extension]{\rm The final part of the proof of Lemma \ref{lem:ext} shows that
\begin{equation}\label{eq:ext-vert}
\G_{\e,\gamma}^r(\tilde u,A\x 3I) \le 4 \G_{\e,\gamma}^r(u,A)\,.
\end{equation}
This estimate will be used to streamline some arguments in the sequel.}
\end{remark}

\subsection{Strong $L^p$-compactness}

Our strategy to obtain a compactness result is based on two separate estimates:
a control of planar interactions, which yields compactness via  a $(d-1)$-dimensional argument, and a control of vertical interactions, which ensures that all cluster points are constant in the vertical direction.

In the following two preliminary lemmas, we show  that vertical and planar interactions can be estimated separately by the full energy.

\begin{lemma}\label{lem:vert-control}
Let $r>0$, and let $A\subset\R^{d-1}$ be an open, bounded set with Lipschitz boundary.
Let $\e_0=\e_0(r,A)$ be as in Lemma \ref{lem:ext}.
Then there exists a constant $C=C(r,A)>0$ such that for every $u\in L^p(A\x I;\R^m)$ and every $z_d\in\R$ it holds that
\begin{equation}\label{eq:vert-control}
\int_{A\x I_1(z_d)}|u(x+\zd)- u|^p\d x \le C\,(\e\vee\gamma)^p\Big(\G_{\e,\gamma}^{r}(u,A)+\|u\|_{L^p(A\x I;\R^m)}^p\Big)
\end{equation}
for every $\e\in(0,\e_0)$ and every $\gamma\in(0,1)$.
\end{lemma}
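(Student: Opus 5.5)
The idea is to bound a single small vertical translation by the energy through an averaging argument, and then reach an arbitrary vertical shift $z_d$ by chaining small steps. To keep all intermediate points inside the domain, we first apply Lemma \ref{lem:ext} to $u$. This produces $\tilde A\Supset A$, a radius $\tilde r=\tilde r(r,A)$ and an extension $\tilde u\in L^p(\tilde A\x 3I;\R^m)$ with
$$
\G_{\e,\gamma}^{\tilde r}(\tilde u,\tilde A\x3I)\le C\big(\|u\|^p_{L^p(A\x I;\R^m)}+\G_{\e,\gamma}^r(u,A)\big)
$$
for $\e<\e_0$. This is the only source of the $L^p$-norm term in \eqref{eq:vert-control}. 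Since $I_1(z_d)$ is empty for $|z_d|\ge2$, we may assume $|z_d|<2$.

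\emph{Step 1: short vertical steps.} Set $\eta:=\frac{\e\tilde r}{2\gamma}\wedge 1$ and let $|h|\le\eta$. For every $w$ in the small cylinder $K:=C_{\e\tilde r/2}^{\eta}$ (which has measure $\sim\e^{d-1}\eta$) we write
$$
u(x+h e_d)-u(x)=\big(u(x+he_d)-u(x+w)\big)+\big(u(x+w)-u(x)\big).
$$
We take the $p$-th power and average over $w\in K$. Both differences are translations by vectors whose planar part is smaller than $\e\tilde r$ and whose vertical part is at most $2\eta\le\frac{\e\tilde r}{\gamma}\wedge 2$. Hence, after the change of variables $z=w$ (respectively $z=he_d-w$, $x'=x+w$), the resulting integrals are bounded through formulation \eqref{eq:conv-z} by $\e^{p}\,\frac{\e^{d}}{\e\vee\gamma}\,\G^{\tilde r}_{\e,\gamma}(\tilde u,\tilde A\x3I)$. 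Dividing by $|K|$ gives
$$
\int|\tilde u(x+he_d)-\tilde u(x)|^p\d x\lesssim \frac{\e^{p+1}}{\eta(\e\vee\gamma)}\,\G^{\tilde r}_{\e,\gamma}(\tilde u,\tilde A\x 3I),
$$
where the integral is over $x\in A\x I_1(h)$. We then check the two regimes for the prefactor. If $\eta=\frac{\e\tilde r}{2\gamma}$, it is $\sim\e^p\gamma/(\e\vee\gamma)\le\e^p$. If $\eta=1$, then $\gamma\lesssim\e$ and it is $\sim\e^{p}$. For $\e<\e_0$ small, the points $x+w$ and $x+he_d$ lie in $\tilde A\x3I$ whenever $x\in A\x I$.

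\emph{Step 2: chaining.} For general $|z_d|<2$, we split $z_d=Nh$ with $N=\lceil |z_d|/\eta\rceil\lesssim 1+\frac{\gamma}{\e}$ and $|h|\le\eta$. Then we telescope,
$$
u(x+\zd)-u(x)=\sum_{k=0}^{N-1}\big(\tilde u(x+(k+1)he_d)-\tilde u(x+khe_d)\big).
$$
For $x\in A\x I_1(z_d)$ all intermediate points lie in $A\x I$ by convexity of $I$. By Minkowski's inequality and Step 1 (applied at each translated base point, still inside $A\x 3I$), we obtain
$$
\int_{A\x I_1(z_d)}|u(x+\zd)-u|^p\d x\lesssim N^p\e^p\,\G^{\tilde r}_{\e,\gamma}(\tilde u,\tilde A\x3I)\lesssim(\e\vee\gamma)^p\,\G^{\tilde r}_{\e,\gamma}(\tilde u,\tilde A\x3I).
$$
Combining this with the extension estimate yields \eqref{eq:vert-control}.

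The main obstacle is the bookkeeping in Step 1. One has to make sure the averaging cylinder $K$ is small enough that both differences remain admissible interactions for the radius $\tilde r$, in both regimes $\e\lesssim\gamma$ and $\gamma\ll\e$ (the cap at $2$ in \eqref{eq:conv-z} matters in the latter). One must also check that the translates never leave $\tilde A\x3I$. This is precisely why we work with the extension rather than with $u$ on $A\x I$ directly.
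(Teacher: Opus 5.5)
Your proposal is correct and follows essentially the paper's argument: extend via Lemma \ref{lem:ext}, control vertical shifts of size at most $\frac{\e\tilde r}{2\gamma}\wedge 1$ by averaging over a small cylinder and using \eqref{eq:conv-z}, then telescope over $\lesssim 1+\gamma/\e$ steps. Your $\eta$ is exactly the paper's $\delta_1$. The remaining differences are cosmetic: you use equal steps with Minkowski where the paper uses integer steps plus a remainder with Jensen, and your averaging cylinder is slightly smaller.
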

\begin{proof}
Let $\tilde A$, $\tilde u$, and $\tilde r$ be given by Lemma \ref{lem:ext}.
For brevity, set $\delta_1:=((\e \tilde r/\gamma)\wedge2)/2$.
The proof is divided into two steps.
We first treat the case $|z_d|\le\delta_1$, and then extend the estimate to $|z_d|<2$.
Observe that if $|z_d|\ge2$, then $|I_1(z_d)|=0$, and therefore \eqref{eq:vert-control} is trivially satisfied.

\smallskip

\emph{Step 1: control of small vertical interactions.}
Let $|z_d|\le\delta_1$.
Given any $z'\in C_{\e \tilde r}^{\delta_1}$, by adding and subtracting $\tilde u(x+z'+\zd)$ we obtain
\begin{multline*}
\int_{A\x I_1(z_d)}|u(x+\zd)-u(x)|^p\d x \le 2^{p-1}\int_{A\x I}|\tilde u(x'+z')-\tilde u(x')|^p\d x' \\
+ 2^{p-1}\int_{A\x I}|\tilde u(x+z'+\zd)-\tilde u(x)|^p\d x
\end{multline*}
where we used Jensen's inequality and the change of variables $x'=x+\zd$.
Averaging with respect to $z'$ yields
\begin{multline*}
\int_{A\x I_1(z_d)}|u(x+\zd)-u(x)|^p\d x \le c\frac{1}{\delta_1\e^{d-1}}\int_{C_{\e \tilde r}^{\delta_1}}\int_{A\x I}|\tilde u(x'+z')-\tilde u(x')|^p\d x'\d z' \\
+ c\frac{1}{\delta_1\e^{d-1}}\int_{C_{\e \tilde r}^{\delta_1}}\int_{A\x I}|\tilde u(x+z'+\zd)-\tilde u(x)|^p\d x\d z'
\end{multline*}
with $c=2^{p-2}/(\omega_{d-1}\tilde r^{d-1})$.
Using the change of variable $z''=z'+\zd$ and the inclusion  $(\delta_1I+z_d)\subset 2\delta_1I$, the second term on the right-hand side reads
$$
\int_{C_{\e \tilde r}^{2\delta_1}}\int_{A\x I}|\tilde u(x+z'')-\tilde u(x)|^p\d x\d z''.
$$
By \eqref{eq:conv-z} and the fact that $A\x I\subset(\tilde A\x3I)_1(z'')$ for all $z''\in C_{\e \tilde r}^{2\delta_1}$, we deduce
\begin{equation}\label{eq:vert-control1}
\int_{A\x I_1(z_d)}|u(x+\zd)- u(x)|^p\d x \le 2c \frac{\e}{\delta_1(\e\vee\gamma)}\e^p \G_{\e,\gamma}^{\tilde r}(\tilde u,\tilde A\x 3I)\le \tilde c \, \e^p \G_{\e,\gamma}^{\tilde r}(\tilde u,\tilde A\x 3I),
\end{equation}
for some constant $\tilde c=\tilde c(\tilde r)>0$, where we used that $\frac{\e}{\e\vee\gamma}=\frac{\e}{\gamma}\wedge1\in\big[\frac{2}{\tilde r}\wedge1,\frac{2}{\tilde r}\vee1\big]\delta_1$.
In particular, \eqref{eq:vert-control1} implies \eqref{eq:vert-control} by Lemma \ref{lem:ext}.

\smallskip

\emph{Step 2: general case.}
Let  $|z_d|<2$ and write $z_d=k\delta_1+z_d'$ with $k\in\Z$, $|k|\le\lfloor\frac{2}{\delta_1}\rfloor$ and $|z_d'|<\delta_1$. Set $s_j:=j\delta_1$ for $j=0,\dots, k$, $s_{k+1}:=z_d$, and $s_j':=s_{j+1}-s_j$.
Then $s_j'=\delta_1$ for $j=0,\dots,k-1$ and $s_k'=z_d'$.
By Jensen's inequality,
\begin{equation*}\label{eq:jensen}
\begin{split}
\int_{A\x I_1(z_d)} |u(x+\zd)-u(x)|^p\d x & \le (k+1)^{p-1} \sum_{j=0}^k \int_{A\x I_1(z_d)} \big|u(x+s_{j+1}e_d)-u(x+s_{j}e_d)\big|^p\d x 
\\
& \le (k+1)^{p-1} \sum_{j=0}^k \int_{A\x I_1(s_j')} \big|u(x+s_j'e_d)-u(x)\big|^p\d x\,,
\end{split}
\end{equation*}
where we used suitable changes of variables, without relabelling, and the inclusion $(I_1(z_d)+s_j')\subset I_{1}(s_j')$.
Since $|s_j'|\le\delta_1$, we apply \eqref{eq:vert-control1} to the right-hand side of the last formula, to obtain
\begin{equation}\label{eq:zed-est}
\int_{A\x I_1(z_d)}|u(x+\zd)-u(x)|^p \d x \le \tilde c\Big(\frac{|z_d|}{\delta_1}+1\Big)^p \e^p \G_{\e,\gamma}^{\tilde r}(\tilde u, \tilde A\x3I).
\end{equation}
Since $\delta_1$ is controlled by $\frac{\e}{\gamma}\wedge1$ (cf. Step 1), the claim follows  from Lemma \ref{lem:ext}.
\end{proof}

\begin{lemma}\label{lem:plan-control}
Let $r>0$, and let $A\subset\R^{d-1}$ be an open, bounded set with Lipschitz boundary.
Let $\tilde r=\tilde r(r,A)$ and $\e_0=\e_0(r,A)$ be as in Lemma \ref{lem:ext}.
Then there exists a constant $C=C(r,A)>0$ such that for every $u\in L^p(A\x I;\R^m)$ and every $|\xi_\alpha|<\tilde r/2$ it holds that
\begin{equation}\label{eq:plan-control}
\int_{A_\e(\xi_\alpha)\x I}\Big|\frac{u(x+\e\xia)-u(x)}{\e}\Big|^p\d x \le C \Big(\G_{\e,\gamma}^r(u,A)+\|u\|_{L^p(A\x I;\R^m)}^p\Big),
\end{equation}
for every $\e\in(0,\e_0)$ and every $\gamma\in(0,1)$.
\end{lemma}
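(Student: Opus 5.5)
We estimate the pure horizontal translation by $\e\xia$ through an averaging argument, in the same spirit as Step 1 of the proof of Lemma~\ref{lem:vert-control}. Let $\tilde A$, $\tilde u$ and $\tilde r$ be given by Lemma~\ref{lem:ext}, and set $\delta_1:=((\e\tilde r/\gamma)\wedge2)/2$ as before.

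Fix $|\xi_\alpha|<\tilde r/2$. For any $z'\in C^{\delta_1}_{\e\tilde r/2}$ we insert the intermediate point $x+z'$ and use Jensen's inequality:
$$
|u(x+\e\xia)-u(x)|^p\le 2^{p-1}\big(|\tilde u(x+\e\xia)-\tilde u(x+z')|^p+|\tilde u(x+z')-\tilde u(x)|^p\big).
$$
We then integrate over $x\in A_\e(\xi_\alpha)\x I$ and average in $z'$ over $C^{\delta_1}_{\e\tilde r/2}$, a set of measure comparable to $\e^{d-1}\delta_1$.

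\emph{Second term.} This is bounded by the average over $z'$ of $\int_{A\x I}|\tilde u(x+z')-\tilde u(x)|^p\,\d x$. Since $A\x I\subset(\tilde A\x 3I)_1(z')$, formula \eqref{eq:conv-z} bounds it by
$$
\frac{\e^d}{\e^{d-1}\delta_1(\e\vee\gamma)}\,\e^p\,\G^{\tilde r}_{\e,\gamma}(\tilde u,\tilde A\x3I).
$$
As already observed in Lemma~\ref{lem:vert-control}, $\frac{\e}{\e\vee\gamma}$ is comparable to $\delta_1$ with constants depending only on $\tilde r$, so this contribution is $\lesssim \e^p\,\G^{\tilde r}_{\e,\gamma}(\tilde u,\tilde A\x3I)$.

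\emph{First term.} Change variables $x'=x+\e\xia$ and set $w=z'-\e\xia$. Since $|\e\xi_\alpha|<\e\tilde r/2$, we have $w\in C^{\delta_1}_{\e\tilde r}$, and for $\e<\e_0$ the point $x'$ stays in $\tilde A\x I$. The same computation then gives a bound by $\e^p\,\G^{\tilde r}_{\e,\gamma}(\tilde u,\tilde A\x3I)$, up to the ratio of the measures of $C^{\delta_1}_{\e\tilde r}$ and $C^{\delta_1}_{\e\tilde r/2}$, which is a constant depending only on $d$.

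Dividing by $\e^p$ and applying Lemma~\ref{lem:ext} gives \eqref{eq:plan-control}.

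The main point to check is the bookkeeping in the vertical direction. The averaging set in $z'$ has vertical height $\delta_1$, which must be within the vertical range $\e\tilde r/\gamma$ allowed by \eqref{eq:conv-z}. It is, because $\delta_1\le(\e\tilde r/\gamma)\wedge 2$. Moreover, the normalization $\frac{\e\vee\gamma}{\e^d}$ in \eqref{eq:conv-z} combines with $1/(\e^{d-1}\delta_1)$ to give a constant independent of $\gamma$, precisely because $\delta_1\simeq\frac{\e}{\gamma}\wedge1$. This is exactly why the extension to $\tilde A\x3I$ from Lemma~\ref{lem:ext} is needed.
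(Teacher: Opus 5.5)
Your proposal is correct and is essentially the paper's proof. The paper also averages over an intermediate shift in the same anisotropic cylinder: its $\zeta\in C^{\delta_2}_{\tilde r/2}$ with $\delta_2=\frac{\tilde r}{2}\wedge\frac{\gamma}{\e}$, written in $\xi$-coordinates, is exactly your $z'\in C^{\delta_1}_{\e\tilde r/2}$. It then bounds both resulting terms by $\G^{\tilde r}_{\e,\gamma}(\tilde u,\tilde A\x 2I)$, using that $(\frac{\e}{\gamma}\vee1)\delta_2$ is bounded above and below, and concludes with Lemma \ref{lem:ext}. The only cosmetic difference is that the paper inserts the intermediate point at $x+\e\xia+z'$ rather than $x+z'$, which just swaps which of the two terms carries the enlarged planar radius $\e\tilde r$.
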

\begin{proof}
Let $\tilde A$ and $\tilde u$ 
be given by Lemma \ref{lem:ext}, and set $\delta_2:=(\frac{\tilde r}{2})\wedge\frac{\gamma}{\e}$. Let $\zeta\in C_{\frac{\tilde r}{2}}^{\delta_2}$.
Arguing as in the proof of Lemma \ref{lem:vert-control}, by the triangle inequality and the change of variables $x'= x+\e\xia$, we obtain
\begin{multline*}
\int_{A_\e(\xi_\alpha)\x I}\Big|\frac{u(x+\e\xia)-u(x)}{\e}\Big|^p\d x \le 2^{p-1}\int_{A\x I}\Big|\frac{\tilde u(x'+\e\zea+\frac{\e}{\gamma}\zed)-\tilde u(x')}{\e}\Big|^p\d x' \\
+ 2^{p-1}\int_{A\x I}\Big|\frac{\tilde u(x+\e(\zea+\xia)+\frac{\e}{\gamma}\zed)-\tilde u(x)}{\e}\Big|^p\d x.
\end{multline*}
Averaging  with respect to $\zeta$, and observing that $A\subset\tilde A_\e(\zeta_\alpha)$ for $\e$ sufficiently small (depending on $A$ and $r$), we obtain, after  the change of variables $\zeta':=\xia+\zeta$,
\begin{align*}
\int_{A_\e(\xi_\alpha)\x I}\Big|\frac{u(x+\e\xia)-u(x)}{\e}\Big|^p\d x &\le c\frac{1}{\delta_2}\int_{C_{\frac{\tilde r}{2}}^{\delta_2}}\int_{(\tilde A\x 2I)_{\e,\frac{\e}{\gamma}}(\zeta)}\Big|\frac{\tilde u(x'+\e\zea+\frac{\e}{\gamma}\zed)-\tilde u(x')}{\e}\Big|^p\d x'\d\zeta \\
&+c\frac{1}{\delta_2}\int_{C_{\tilde r}^{\delta_2}}\int_{(\tilde A\x2I)_{\e,\frac{\e}{\gamma}}(\zeta')}\Big|\frac{\tilde u(x+\e\zea'+\frac{\e}{\gamma}\zed')-\tilde u(x)}{\e}\Big|^p\d x\d\zeta',
\end{align*}
with $c=2^{d+p-3}/(\omega_{d-1}\tilde r^{d-1})$. This yields
\begin{equation}\label{eq:planar-noext}
\int_{A_\e(\xi_\alpha)\x I}\Big|\frac{u(x+\e\xia)-u(x)}{\e}\Big|^p\d x \le 2c\frac{1}{\delta_2(\frac{\e}{\gamma}\vee1)}\G_{\e,\gamma}^{\tilde r}(\tilde u,\tilde A\x 2I).
\end{equation}
Since $(\frac{\e}{\gamma}\vee1)\delta_2\in\big[\frac{2}{\tilde r}\vee1,\frac{2}{\tilde r}\wedge1\big]$, the claim follows from Lemma \ref{lem:ext}.
\end{proof}

We now prove an $L^p$-compactness result for uniformly bounded sequences with uniformly bounded  interaction energy.

\begin{theorem}\label{kolcom}
Let $A\subset\R^{d-1}$ be an open, bounded set with Lipschitz boundary, and let $\e_j,\gamma_j\to0$.
Then every sequence $\{u_j\}_j\subset L^p(A\x I;\mathbb{R}^m)$ such that, for some $r>0$,
$$
\sup_{j\in\N} \left\{\| u_j\|_{L^p(A\x I;\R^m)}+\G_{\e_j,\gamma_j}^{r}(u_j,A)\right\}<+\infty,
$$
is strongly relatively compact in $L^p(A\x I;\mathbb{R}^m)$, and every limit of a converging subsequence belongs to $W^{1,p}(A\x I;\mathbb{R}^m)$ and is independent of $x_d$.
\end{theorem}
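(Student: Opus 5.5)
The plan is to use the Fréchet–Kolmogorov criterion on $A\x I$, treating vertical and planar translations separately with Lemmas \ref{lem:vert-control} and \ref{lem:plan-control}. Set $M:=\sup_j\{\|u_j\|^p_{L^p}+\G^r_{\e_j,\gamma_j}(u_j,A)\}$. For any fixed $A'\Subset A$ and any $h\in\R^d$ small, I bound $\int_{A'\x I_1(h_d)}|u_j(x+h)-u_j(x)|^p\d x$ by splitting $h=h_\alpha+h_d e_d$ and using the triangle inequality.

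\emph{Vertical part.} Lemma \ref{lem:vert-control}, applied with $z_d=h_d$, gives a bound $C(\e_j\vee\gamma_j)^pM$. This tends to $0$ uniformly in $h_d$. It is even independent of how small $h_d$ is, which also shows that any limit is constant in $x_d$: for every $z_d$, passing to the limit in \eqref{eq:vert-control} gives $\int|u(x+\zd)-u(x)|^p=0$.

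\emph{Planar part.} Write $h_\alpha=\e_j k\xi_\alpha+\e_j\eta_\alpha$ with $|\xi_\alpha|=\tilde r/4$, $k\le |h_\alpha|/(\e_j\tilde r/4)$, and $|\eta_\alpha|<\tilde r/4$. Telescoping with Jensen (as in Step 2 of Lemma \ref{lem:vert-control}) and applying \eqref{eq:plan-control} to each step of size $\e_j\xi_\alpha$ bounds the planar translation by $C(k+1)^p\e_j^pM\lesssim (|h_\alpha|+\e_j)^pM$. The translates are handled on $A'$ so that all intermediate points stay in $A$ once $|h_\alpha|<\dist(A',\partial A)$.

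Combining the two parts, $\sup_j\|u_j(\cdot+h)-u_j\|^p_{L^p(A'\x I_1(h_d))}\lesssim M\big(|h|^p+\e_j^p+(\e_j\vee\gamma_j)^p\big)$. The error terms $\e_j$ go to zero, so they do not spoil equicontinuity: for fixed $h$ only finitely many $j$ violate a given threshold, and each single $L^p$ function is continuous under translation. Near the boundary (lateral and vertical), I use the extension $\tilde u_j$ of Lemma \ref{lem:ext}, whose $L^p$ norm and energy on $\tilde A\x3I$ are controlled by $M$. This makes the above estimate available on all of $A\x I$, so no mass escapes and Fréchet–Kolmogorov applies on $A\x I$. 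This yields a subsequence converging strongly in $L^p(A\x I)$ to some $u$ independent of $x_d$.

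\emph{Regularity.} For fixed $\xi_\alpha$ with $|\xi_\alpha|<\tilde r/2$, the difference quotients $\e_j^{-1}(u_j(x+\e_j\xia)-u_j(x))$ are bounded in $L^p(A'\x I)$ by Lemma \ref{lem:plan-control}. Testing against $\varphi\in C_c^\infty$ and passing to the limit, the distributional derivative $\nabla_\alpha u\,\xi_\alpha$ lies in $L^p$ with norm $\le CM$, uniformly in $A'$. Since $u$ does not depend on $x_d$, we have $\partial_d u=0$, hence $u\in W^{1,p}(A\x I)$. Alternatively, one may view $u$ as a function on $A$ and invoke the $(d-1)$-dimensional result of \cite{AABPT23}.

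I expect the main obstacle to be the uniformity of the translation estimate in $j$ when $|h|\lesssim\e_j$. For those $h$ the planar bound reduces to $C\e_j^pM$, which is small only because $\e_j\to0$, so the Kolmogorov argument has to be organized as \emph{$\limsup_{h\to0}\sup_{j\ge j_0}$ plus finitely many $j$}. A second, more routine point is the boundary bookkeeping through the extension.
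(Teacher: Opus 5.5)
Your proposal is correct, but it takes a different route from the paper.

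The paper never runs a $d$-dimensional Fr\'echet--Kolmogorov argument. Instead it works as follows.
\begin{itemize}
\item It passes to the vertical averages $v_j(x_\alpha)$ of $u_j$. Jensen's inequality and Lemma~\ref{lem:plan-control}, integrated over $\xi_\alpha\in B_{\tilde r/2}$, bound a $(d-1)$-dimensional convolution energy of $v_j$.
\item It then invokes the compactness theorem of \cite{AABPT23} in dimension $d-1$, which gives $v_j\to v$ in $L^p(A)$ with $v\in W^{1,p}(A)$.
\item Finally, Lemma~\ref{lem:vert-control} averaged over $z_d\in(-2,2)$ gives $\|u_j-v_j\|^p_{L^p(A\times I)}\le C(\e_j\vee\gamma_j)^p$, hence $u_j\to v(x_\alpha)$.
\end{itemize}
In this way tightness at $\partial(A\times I)$ and $W^{1,p}$ regularity are inherited from the cited result, and independence of $x_d$ is automatic. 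The argument also yields a quantitative rate for the distance to the vertical average, which is reused later, e.g.\ in Lemma~\ref{lem:poinc}.

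Your approach is more self-contained. The telescoped planar bound $\lesssim(|h_\alpha|+\e_j)^pM$, combined with the vertical bound, essentially re-proves the lower-dimensional compactness anisotropically. Your duality argument with difference quotients gives $\partial_i u\in L^p$ in an elementary way.

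The price is the boundary bookkeeping you only sketch. Lemmas~\ref{lem:vert-control} and~\ref{lem:plan-control} control translations only when both endpoints lie in $A\times I$.
\begin{itemize}
\item For the lateral boundary, apply the two lemmas to $\tilde u_j$ on $A''\times I$, with $A\Subset A''\Subset\tilde A$ Lipschitz.
\item For the faces $x_d=\pm1$, you can rescale vertically: the energy on $A''\times 2I$ is comparable to a $\G_{\e,2\gamma}$-energy of the rescaled function on $A''\times I$.
\item More simply, you can use the averaged bound $\|u_j-v_j\|_{L^p}\lesssim\e_j\vee\gamma_j$. This is exactly the paper's second step, and it makes tightness near the top and bottom immediate.
\end{itemize}
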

\begin{proof}
Let $\tilde r$ be as in Lemma \ref{lem:ext}. We split the proof into two steps: precompactness of planar averages of $u_j$ and control of vertical interactions. More precisely, 
we first show that the averages of $u_j$ are precompact in dimension $d-1$.
This is achieved by exploiting the control of planar interactions provided by Lemma \ref{lem:plan-control}, together with the compactness proved in \cite{AABPT23} for full-dimensional domains.
We then show that $u_j$ is close to its average by means of the control of vertical interactions given by Lemma \ref{lem:vert-control}.

\smallskip

\emph{Step 1: planar compactness.}
Define $v_j\in L^p(A;\R^m)$ by
$$
v_j(x_\alpha) := \ave_{-1}^1 u_j(x_\alpha,x_d) \d x_d.
$$
By Jensen's inequality and integrating \eqref{eq:plan-control} over $B_{\frac{\tilde r}{2}}$, we obtain
$$
\int_{B_{\frac{\tilde r}{2}}}\int_{A_{\e_j}(\xi_\alpha)}\Big|\frac{v_j(x_\alpha+\e_j\xi_\alpha)-v_j(x_\alpha)}{\e_j}\Big|^p \d x_\alpha \d\xi_\alpha \le C \big(\G_{\e_j,\gamma_j}^r(u_j,A)+\|u_j\|^p_{L^p(A\times I; \R^m)}\big)\,,
$$
for some constant $C=C(r,A)$.
By the compactness result \cite[Theorem 4.2]{AABPT23} applied in dimension $d-1$, up to subsequences, $v_j\to v$ in $L^p(A;\R^m)$ for some $v\in W^{1,p}(A;\R^m)$.

\smallskip

\emph{Step 2: conclusion.} Let $u\in W^{1,p}(A\x I;\R^m)$ be such that $u(x):=v(x_\alpha)$.
By Jensen's inequality and the compactness established in Step 1, up to subsequences, we have
\begin{align*}
\int_{A\x I}|u_j(x)-u(x)|^p\d x &\le 2^{p-1}\int_I\int_A|u_j(x_\alpha,x_d)-v_j(x_\alpha)|^p \d x_\alpha\d x_d + 2^p \int_A |v_j(x_\alpha)-v(x_\alpha)|^p\d x_\alpha \\
&\le 2^{p-1}\int_I\int_A|u_j(x_\alpha,x_d)-v_j(x_\alpha)|^p \d x_\alpha\d x_d + o(1)\,.
\end{align*}
Exploiting again Jensen's inequality we have
\begin{equation}\label{eq:translation}
\begin{aligned}
\int_I\int_A |u_j(x_\alpha,x_d)-v_j(x_\alpha)|^p \d x_\alpha\d x_d &= \int_I\int_A\Big|u_j(x_\alpha,x_d)-\ave_I u_j(x_\alpha,y_d)\d y_d\Big|^p \d x_\alpha\d x_d \\
&\le  \int_I\int_A \ave_I |u_j(x_\alpha,x_d)-u_j(x_\alpha,y_d)|^p \d y_d \d x_\alpha \d x_d \\
&\le \frac{1}{2} \int_{-2}^{2}\int_{A\x I_1(z_d)} |u_j(x+\zd)-u_j(x)|^p \d x \d z_d,
\end{aligned}
\end{equation}
where the last inequality is obtained after the change of variables $y_d=x_d+z_d$.
By averaging \eqref{eq:vert-control} over $z_d\in (-2,2)$, we infer that
$$
\int_I\int_A |u_j(x_\alpha,x_d)-v_j(x_\alpha)|^p \d x_\alpha \d x_d \le C (\e_j\vee\gamma_j)^p
$$
possibly enlarging $C$. This concludes the proof.
\end{proof}

\subsection{Poincar{\'e}-type inequalities}

As a last technical tool, we establish a nonlocal Poincar{\'e}-type inequality for thin domains.
As the full-dimensional counterparts are well known (see \cite{AABPT23,P04b}), we combine those results with Lemma \ref{lem:vert-control} to obtain the corresponding estimate in the thin-film setting.

\begin{lemma}\label{lem:poinc}
Let $A\subset\R^{d-1}$ be an open, bounded, set with Lipschitz boundary, and let $r>0$.
Let $u\in L^p(A\x I;\R^m)$ satisfy $u(x)=0$ for almost every $x\in A\x I$ such that $\dist(x_\alpha,\partial A)<r\e$.
Then there exists a constant $C=C(r,A)>0$ such that 
$$
\int_{A\x I}|u(x)|^p \le C \G_{\e,\gamma}^r(u,A).
$$
for every $\e,\gamma\in(0,1)$.
\end{lemma}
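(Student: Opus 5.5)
We split $u$ into its vertical average and its oscillation around it, as in the proof of Theorem \ref{kolcom}. Set $v(x_\alpha):=\ave_I u(x_\alpha,x_d)\,\d x_d$. By Jensen's inequality,
$$
\int_{A\x I}|u|^p\,\d x \le 2^{p-1}\int_{A\x I}|u(x)-v(x_\alpha)|^p\,\d x+2^{p}\int_A|v|^p\,\d x_\alpha .
$$
We estimate the two terms separately. The planar term will use a $(d-1)$-dimensional nonlocal Poincar\'e inequality. The vertical term will use the argument of Lemma \ref{lem:vert-control}, run without the $\|u\|_{L^p}$ correction.

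\emph{Planar term.} The function $v$ vanishes for $\dist(x_\alpha,\partial A)<r\e$. Extend $v$ and $u$ by zero outside $A$ (respectively $A\x I$); no reflection is needed, so Lemma \ref{lem:ext} is not used. By Jensen in $x_d$, for $|\xi_\alpha|<r/2$ we have
$$
\int|v(x_\alpha+\e\xi_\alpha)-v(x_\alpha)|^p\,\d x_\alpha\le\tfrac12\int_{\R^{d-1}\x I}|u(x+\e\xia)-u(x)|^p\,\d x .
$$
The right-hand side should be bounded by $C\e^p\G_{\e,\gamma}^r(u,A)$, following the proof of Lemma \ref{lem:plan-control}. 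We average over auxiliary interactions $\zeta\in C^{\delta_2}_{r/2}$ with $\delta_2=\frac r2\wedge\frac\gamma\e$ and use the triangle inequality. Because $u$ has been extended by zero and vanishes in an $r\e$-strip near $\partial A$, interactions leaving $A\x I$ horizontally, with $|\xi_\alpha|<r$, cost nothing beyond what $\G^r_{\e,\gamma}(u,A)$ already counts. Interactions leaving through the top and bottom are avoided by keeping the vertical translation inside $I$: for $x_d>0$ we average over $\zeta_d$ of the opposite sign, which is a half-cylinder and costs only a factor $2$. Summing up, $v$ has bounded $(d-1)$-dimensional nonlocal energy with kernel $\chi_{B_{r/2}}$ and vanishes near $\partial A$. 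The full-dimensional Poincar\'e inequality of \cite{AABPT23,P04b}, applied in dimension $d-1$, then gives $\int_A|v|^p\le C\,\G^r_{\e,\gamma}(u,A)$, with $C$ depending only on $r$ and $A$ (we may assume $\e<1$).

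\emph{Vertical term.} As in \eqref{eq:translation}, we have
$$
\int_{A\x I}|u-v|^p\,\d x\le\tfrac12\int_{-2}^2\int_{A\x I_1(z_d)}|u(x+\zd)-u(x)|^p\,\d x\,\d z_d .
$$
We repeat Steps 1 and 2 of Lemma \ref{lem:vert-control}, using the zero extension in the horizontal directions and one-sided averaging in the vertical direction instead of $\tilde u$. This gives the bound $C(\e\vee\gamma)^p\,\G^r_{\e,\gamma}(u,A)\le C\,\G^r_{\e,\gamma}(u,A)$, without the $L^p$ norm on the right-hand side. Combining the two estimates proves the lemma.

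The main obstacle is removing the $\|u\|_{L^p}^p$ term that Lemma \ref{lem:ext} introduces through the partition of unity. The boundary condition makes the zero extension admissible horizontally. The delicate point is the top and bottom faces: the one-sided averaging must really control every vertical increment, including in the regime $\gamma\ll\e$ where $\delta_1$ is of order $1$. If this fails, the fallback is to use the vertical reflection from Lemma \ref{lem:ext}, which by \eqref{eq:ext-vert} costs no $L^p$ term, and to combine it with the zero extension in $x_\alpha$.
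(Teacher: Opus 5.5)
Your proposal is correct and follows the paper's proof. The paper uses the same split into the vertical average $v$ and the oscillation $u-v$, the $(d-1)$-dimensional Poincar\'e inequality \cite[Proposition 4.1]{AABPT23} together with \eqref{eq:planar-noext} (no planar extension, thanks to the boundary condition), and the averaged form of \eqref{eq:vert-control}; the only difference is at the top and bottom faces, where the paper uses the vertical reflection, which by \eqref{eq:ext-vert} costs no zero-order term (your fallback), whereas your one-sided averaging also works, since $\delta_1\le 1$ and $\frac{\e}{\gamma}\delta_2\le 1$ keep every intermediate point inside $\R^{d-1}\x I$.
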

\begin{proof}
Define
$$
v(x_\alpha):=\ave_I u(x_\alpha,x_d)\d x_d.
$$
By definition, $v(x_\alpha)=0$ for every $x_\alpha\in A$ such that $\dist(x_\alpha,\partial A)<\frac{r}{2}\e$, so that we may apply \cite[Proposition 4.1]{AABPT23} to obtain
$$
\int_A|v(x_\alpha)|^p\d x_\alpha \le C \int_{B_\frac{r}{2}}\int_{A_\e(\xi_\alpha)}\Big|\frac{v(x_\alpha+\e\xi_\alpha)-v(x_\alpha)}{\e}\Big|^p\d x_\alpha\d\xi_\alpha.
$$
By Jensen's inequality and \eqref{eq:planar-noext}, and observing that no extension is required in view of the boundary conditions, we deduce
\begin{equation}\label{eq:poinc-plan}
\int_A|v(x_\alpha)|^p\d x_\alpha \le C \G_{\e,\gamma}^r(u,A),
\end{equation}
possibly enlarging $C$.

Arguing as in \eqref{eq:translation}, namely by combining Jensen's inequality, a change of variables, and the averaged version of \eqref{eq:vert-control}, we also have that
\begin{equation}\label{eq:poinc-vert}
\int_I\int_A|u(x_\alpha,x_d)-v(x_\alpha)|^p\d x_\alpha\d x_d \le C (\e\vee\gamma)^p \G_{\e,\gamma}^r(u,A).
\end{equation}
Here again no planar extension is needed, while vertical extension does not produce a zero-order term thanks to \eqref{eq:ext-vert}.

Gathering \eqref{eq:poinc-plan} and \eqref{eq:poinc-vert}, we conclude that
$$
\int_{A\x I}|u(x)|^p\d x \le 2^{p-1}\int_{A\x I}|u(x)-v(x_\alpha)|^p\d x+2^{p}\int_{A}|v(x_\alpha)|^p\d x_\alpha \le 2^{p+1}C \G_{\e,\gamma}^r(u,A)
$$
which proves the claim.
\end{proof}

\section{Compactness and Integral representation}\label{sec:Integral_R}
In this section we establish a $\Gamma$-compactness result for the family of energies under consideration, together with an integral representation for the corresponding $\Gamma$-limits.

\begin{theorem}\label{thm:integral-rep}
Let $F_{\e, \gamma}$ be defined as in \eqref{eq:functionals} and assume that \eqref{eq:h0}--\eqref{eq:h4} hold.
Then, for every sequences $\{\e_j\}_j$ and $\{\gamma_j\}_j$ such that $\e_j,\gamma_j\to0$, there exists a sequence $j_k\to\infty$ and a Charath{\'e}odory function $f_0:\omega\times\R^{m\times(d-1)}\to\R$, quasiconvex in the second variable and satisfying
$$
c(|M|^p-1)\le f_0(x_\alpha,M) \le C(|M|^p+1),
\quad
\text{for every } x_\alpha\in\omega,\, M\in\R^{m\times(d-1)},
$$
such that
$$
\Glim_{k\to\infty} F_{\e_{j_k},\gamma_{j_k}}(v,A)=
\begin{cases}
\displaystyle
\int_A f_0(x_\alpha,\nabla_\alpha v(x_\alpha))\d x_\alpha & v\in W^{1,p}(A;\R^m), \\[4pt]
+\infty & \text{otherwise,}
\end{cases}
$$
for every $A\in\Ar(\omega)$, with respect to the convergence in Definition \ref{def:conv}.
\end{theorem}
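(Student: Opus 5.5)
We follow the localization method of $\Gamma$-convergence, adapted to the cross-section $\omega$. We work with the rescaled functionals $\F_{\e,\gamma}(u,A)$ on $L^p(A\x I;\R^m)$ and strong $L^p$-convergence; by the remark in Section \ref{sec:rescaled} this is equivalent to the dimension-reduction convergence of Definition \ref{def:conv}. By the general compactness theorem for $\Gamma$-convergence (applied on a countable dense family of open sets, e.g.\ finite unions of polyrectangles in $\omega$), we extract a subsequence $j_k$ such that the $\Gamma$-liminf and $\Gamma$-limsup, $F'(u,A)$ and $F''(u,A)$, coincide on that family. The aim is to show that the common limit $F(u,A)$, defined through inner regularity, is the restriction to $\Ar(\omega)$ of a Borel measure in $A$, local, lower semicontinuous, translation-compatible with $p$-growth. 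The Buttazzo--Dal Maso (or Fonseca--Müller) integral representation theorem in dimension $d-1$ then yields a Carathéodory density $f_0$ that is quasiconvex in $M$.

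\emph{Domain and growth.} By Theorem \ref{kolcom} and the lower bound in \eqref{growth-cond}, finiteness of $F'(u,A)$ forces $u$ to be independent of $x_d$ with $v\in W^{1,p}(A;\R^m)$. Hence the limit is $+\infty$ otherwise, and there is a lower bound $c(\|\nabla_\alpha v\|_p^p-|A|)$. For the latter we use that the liminf of $\G^{r_0}_{\e,\gamma}$ dominates the planar interactions; to see this, average \eqref{eq:plan-control} on the vertical mean $v_j$ and invoke Proposition \ref{prop:GlimG} (or the lower bound of \cite{AABPT23} in dimension $d-1$). The upper bound $F''(v,A)\le C(\|\nabla_\alpha v\|_p^p+|A|)$ follows from \eqref{growth-cond} together with the scaling estimate of Section \ref{sec:scaling} with the constant recovery sequence $u_j=v$, using \eqref{eq:h1} and \eqref{eq:h3}. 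Locality is immediate since $F_{\e,\gamma}(\cdot,A)$ depends only on values in $A\x I$. Translation invariance in $v\mapsto v+c$ is immediate as well.

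\emph{Main obstacle: the fundamental estimate.} The key step is a nonlocal fundamental estimate, uniform in $\gamma$. Take $A'\Subset A$, $B$, recovery sequences $u_j$ on $A$ and $w_j$ on $B$, and cut-offs $\varphi_i(x_\alpha)$ depending only on the planar variable, with $N$ layers of width $\gg\e$ between $A'$ and $A$. We glue $\varphi_iu_j+(1-\varphi_i)w_j$ and must estimate the interactions across layers. Interactions with $|\xi_\alpha|\le r_\eta$ are controlled, as in \cite[Prop.\ 5.1]{AABPT23}, by the energies of $u_j$ and $w_j$ plus $\|\nabla\varphi_i\|^p_\infty\|u_j-w_j\|^p_{L^p}$ on the layer. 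Here two things matter. The cutoff not depending on $x_d$ means that vertical interactions produce no gradient-of-$\varphi$ term. The planar differences are handled by Lemma \ref{lem:plan-control} at every scale ratio $\e/\gamma$. Long-range interactions $|\xi_\alpha|>r_\eta$ are estimated via \eqref{eq:h2} when $\e\lesssim\gamma$ and via \eqref{eq:h4} when $\e/\gamma\to\infty$; this uses the factor $(\e/\gamma\vee1)$ and the fact that only $|\xi_d|<2\gamma/\e$ contributes, exactly as in \eqref{eq:scal-ub}. The resulting error is $O(\eta)$ times norms of $\nabla_\alpha v$, by Lemma \ref{lem:plan-control}. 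Averaging over $i$ and letting $N\to\infty$, then $\eta\to0$, gives
\[
F''(u,A'\cup B)\le F''(u,A)+F''(u,B)+\sigma .
\]
This is subadditivity, which combined with inner regularity (from the growth bound) and superadditivity on disjoint sets yields the measure property through the De Giorgi--Letta criterion. Dirichlet-type constructions near $\partial A$ use the extension of Lemma \ref{lem:ext}.

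\emph{Conclusion.} With the measure property, lower semicontinuity and $p$-growth, the representation theorem gives $F(v,A)=\int_Af_0(x_\alpha,\nabla_\alpha v)\,\d x_\alpha$ on $\Ar(\omega)$, with $f_0$ quasiconvex and satisfying the stated bounds. Finally, inner regularity combined with the fundamental estimate extends the equality $F'=F''$ from the dense family to every Lipschitz $A$, so the $\Gamma$-limit exists for all $A\in\Ar(\omega)$.
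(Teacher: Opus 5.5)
Your proposal is correct and follows essentially the paper's route. The ingredients match: the domain and $p$-growth bounds via Theorem~\ref{kolcom} and the scaling estimate, a fundamental estimate with cut-offs depending only on $x_\alpha$, inner regularity, the De Giorgi--Letta criterion, and the Buttazzo--Dal Maso compactness and representation theorems. The paper first reduces to interactions with $|\xi_\alpha|<T$ via Lemma~\ref{lem:trunc}; this is the same tail control through \eqref{eq:h2} and \eqref{eq:h4} that you build directly into the fundamental estimate.

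Your vertical-averaging argument for the lower bound is, if anything, more careful than the paper's appeal to Proposition~\ref{prop:GlimG}, which as stated concerns $x_d$-independent functions. It works provided you apply the planar control on interior sets $A'\Subset A$, where no planar extension and hence no zero-order term $\|u\|^p_{L^p}$ enters, and then let $A'$ increase to $A$.
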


The proof relies on the localization method of $\Gamma$-convergence (cf.\ \cite{B89,BD98}), suitably adapted to the nonlocal-to-local framework (see \cite{AC04,AABPT23} for the full-dimensional case), and already employed in the context of dimension reduction for discrete-to-continuum models in \cite{ABC08}. 

We begin by collecting some auxiliary results.
We introduce the following notation.
Given $F_{\e,\gamma}$ as in \eqref{loc-functionals}, we define the functionals $F', F'': L^p(\omega;\R^m)\x\A(\omega)\to[0,+\infty]$ as
\begin{equation}\label{eq:Gliminfsup}
F'(\cdot,A):= \Gamma\text{-}\liminf_{\e\to0} F_{\e,\gamma}(\cdot,A),
\quad
F''(\cdot,A):= \Gamma\text{-}\limsup_{\e\to0} F_{\e,\gamma}(\cdot,A),
\end{equation}
with respect to the convergence in Definition \ref{def:conv}.

Notice that, for any $A\in\Ar(\omega)$, in view of the compactness result in Theorem \ref{kolcom}, $F'(\cdot,A)$ and $F''(\cdot,A)$ coincide with the $\Gamma$-liminf and $\Gamma$-limsup of the rescaled functionals $\F_{\e,\gamma}$ with respect to the strong $L^p$ topology, upon identifying $L^p(\omega;\R^m)$ with functions in $L^p(\Omega;\R^m)$ that are constant in $x_d$.

As a consequence of the analysis developed in the previous sections, we can derive upper and lower bounds for the $\Gamma$-limits of $\F_{\e,\gamma}$.
In particular, the growth assumptions in Section \ref{sec:growth}, together with the compactness result in Theorem \ref{kolcom}, determine the effective domain of the $\Gamma$-limits.

\begin{proposition}\label{liminf-limsupBound}
Let $A\in \Ar(\omega)$, let $F_{\e, \gamma}$ be defined as in \eqref{loc-functionals}, and assume that \eqref{eq:h0}--\eqref{eq:h4} hold.
Then there exist two constants $C_2>C_1>0$ such that
\begin{equation}\label{eq:Glim-growth-cond}
C_1(\|\nabla_\alpha v\|^p_{L^p(A;\R^m)}-|A|) \le F'(v,A) \le F''(v,A) \le C_2(\|\nabla_\alpha v\|_{L^p(A;\R^m)}^p+|A|),
\end{equation}
for every $v\in W^{1,p}(\omega;\R^m)$ and $F'(\cdot,A)=+\infty$ on $L^p(\omega;\R^m)\setminus W^{1,p}(A ;\R^m)$.
\end{proposition}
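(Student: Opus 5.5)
The plan has three parts: the upper bound, the domain statement, and the lower bound. All three work with the rescaled functionals $\F_{\e,\gamma}$ and use the growth bounds \eqref{growth-cond}.

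\emph{Upper bound.} Take $v\in W^{1,p}(\omega;\R^m)$, viewed as a function on $\Omega$ constant in $x_d$, and use the constant recovery sequence $u_\e=v$. By the right inequality in \eqref{growth-cond} it suffices to bound $\G_{\e,\gamma}[\psi_\e](v,A)$. This is the computation of Section \ref{sec:scaling}, localized to $A$. Extend $v|_A$ to a $W^{1,p}(\R^{d-1})$ function with $\|\nabla_\alpha \tilde v\|_{L^p(\R^{d-1})}\le C_A(\|\nabla_\alpha v\|_{L^p(A)}+\|v\|_{L^p(A)})$. Then estimate the translations by $\e|\xi_\alpha|\,\|\nabla_\alpha\tilde v\|_{L^p}$ and split into two regimes. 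If $\e\le\gamma$, use \eqref{eq:h1}. If $\e>\gamma$, the factor $\frac{\e}{\gamma}$ is compensated by the vertical range $|\xi_d|<2\gamma/\e$ together with \eqref{eq:h3}, exactly as in \eqref{eq:scal-ub}. The $\rho$-term gives $c'|A|$.

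This yields a bound depending on $\|v\|_{L^p}$ as well as on $\nabla_\alpha v$, so the zero-order term has to be removed. To do so, I would observe that $F_{\e,\gamma}(v+c)=F_{\e,\gamma}(v)$ for constants $c$, and apply the estimate on each Lipschitz connected component of $A$ (finitely many) with the mean subtracted. Poincar\'e--Wirtinger on that component then gives $\|v-\bar v\|_{L^p}\lesssim\|\nabla_\alpha v\|_{L^p}$. This gives the $\Gamma$-limsup bound.

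\emph{Domain and lower bound.} Let $v_\e\to v$ in the sense of Definition \ref{def:conv} with $\liminf F_{\e,\gamma}(v_\e,A)<\infty$, and pass to a subsequence realizing the liminf. By the left inequality in \eqref{growth-cond}, $\G^{r_0}_{\e,\gamma}(u_\e,A)$ is bounded, and $u_\e$ is bounded in $L^p$ since it converges. Theorem \ref{kolcom} then gives $v\in W^{1,p}(A)$, so $F'=+\infty$ off $W^{1,p}(A)$.

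For the quantitative lower bound, I would average $u_\e$ in $x_d$ to get $v_\e^{\rm av}\to v$ in $L^p(A)$. Integrating \eqref{eq:plan-control} (more precisely, its non-extended form \eqref{eq:planar-noext}) over $B_{\tilde r/2}$, together with Jensen's inequality, controls a $(d-1)$-dimensional convolution energy of $v^{\rm av}_\e$ with kernel $\chi_{B_{\tilde r/2}}$. To avoid the $\|u\|^p$ term, I would apply the estimate on compactly contained $A'\Subset A$, where no extension is needed. The full-dimensional $\Gamma$-liminf in dimension $d-1$ (\cite[Theorem 3.1]{AABPT23}, or Proposition \ref{prop:GlimG}) then gives
\[
\liminf\ \text{energy}\ \ge c\int_{B_{\tilde r/2}}\int_{A'}|\nabla_\alpha v\,\xi_\alpha|^p\ge c'\|\nabla_\alpha v\|^p_{L^p(A')}.
\]
Letting $A'\uparrow A$ and absorbing the $-|A|$ from \eqref{growth-cond} gives the left inequality.

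\emph{Main obstacle.} The delicate point is making the constants in the planar control of the lower bound uniform in $\e/\gamma$. Concretely, the relation $(\frac{\e}{\gamma}\vee1)\delta_2\approx1$ from the proof of Lemma \ref{lem:plan-control} must survive when the argument is run without the extension. My first attempt would be to redo \eqref{eq:planar-noext} on $A'\Subset A$, using that for $\e$ small the shifted points stay in $A\x I$. In the vertical direction, only interactions within $I$ are kept: I would replace $2I$ by $I$ at the cost of restricting $x_d$ to a subinterval and using symmetry.
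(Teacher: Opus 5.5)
Your treatment of the domain statement (via Theorem \ref{kolcom}) and of the upper bound (via \eqref{growth-cond} and the computation leading to \eqref{eq:scal-ub}) follows the paper. The lower bound is where you genuinely differ. The paper obtains it in one line from Proposition \ref{prop:GlimG}, using $\theta(\delta,r_0)\ge 2r_0\wedge 4$ and a rotation argument, which gives an explicit constant. As stated, however, Proposition \ref{prop:GlimG} concerns functions constant in $x_d$, while $F'$ is computed along arbitrary $u_\e$, so that one-liner tacitly relies on the unrestricted $\Gamma$-liminf of $\G^{r_0}_{\e,\gamma}$. Your route handles sequences that are not constant in $x_d$ directly: vertical averages $v^{\rm av}_\e$, planar control redone on $A'\Subset A$, the classical $(d-1)$-dimensional $\Gamma$-liminf, then $A'\uparrow A$. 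It gives constants independent of $A$ and of $\e/\gamma$, at the price of a less explicit constant. The step you flag as the main obstacle does go through. With $\delta_2=\frac{r_0}{2}\wedge\frac{\gamma}{\e}$ one has $\delta_2(\frac{\e}{\gamma}\vee1)\ge\frac{r_0}{2}\wedge1$ in every regime. The planar shifts stay in $A$ once $\e r_0<\dist(A',\R^{d-1}\setminus A)$. Since $\frac{\e}{\gamma}\delta_2\le1$, you may average over $\zeta_d\in(-\delta_2,0)$ when $x_d>0$ and over $\zeta_d\in(0,\delta_2)$ when $x_d<0$. Alternatively, just use \eqref{eq:ext-vert}, which extends vertically without any zero-order term.

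One step of your upper bound needs an extra remark. If $A$ is disconnected, $F_{\e,\gamma}(\cdot,A)$ is not a sum over components: it contains interactions between $A_i\x I$ and $A_j\x I$ for $i\neq j$. Subtracting a different mean on each component therefore changes the energy, and invariance under a single constant does not justify it. The fix is easy because you only need a $\limsup$. The finitely many components of a bounded Lipschitz open set lie at mutual distance $d_0>0$, so cross interactions require $|\xi_\alpha|\ge d_0/\e$. They are then bounded by $C d_0^{-p}\|v\|^p_{L^p(A;\R^m)}$ times a tail of $\psi_\e|\xi|^p$, which vanishes as $\e\to0$ by \eqref{eq:h2} and \eqref{eq:h4}. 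Even simpler, you can drop Poincar\'e--Wirtinger altogether. For $|\xi_\alpha|<T$, bound $\int_{A_\e(\xi_\alpha)}|\tilde v(x_\alpha+\e\xi_\alpha)-\tilde v(x_\alpha)|^p\d x_\alpha$ by $\e^p|\xi_\alpha|^p\|\nabla_\alpha\tilde v\|^p_{L^p(A+B_{\e T})}$, and note that $\|\nabla_\alpha\tilde v\|_{L^p(A+B_{\e T})}\to\|\nabla_\alpha v\|_{L^p(A)}$ because $|\partial A|=0$. Then treat $|\xi_\alpha|\ge T$ with \eqref{eq:h2} and \eqref{eq:h4}, and let $T\to\infty$. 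This yields $C_2$ depending only on the constants in \eqref{eq:GC}--\eqref{eq:h4}, not on $A$, which is the form of the growth condition used in the proof of Theorem \ref{thm:integral-rep}. The paper's one-line upper bound passes over this zero-order term as well.
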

\begin{proof}
The last part of the statement follows directly from Theorem \ref{kolcom}.

The lower bound in  \eqref{eq:Glim-growth-cond} follows from \eqref{growth-cond} and Proposition \ref{prop:GlimG}. Indeed, observing that $\theta(\delta,r) \ge 2r_0\wedge 4$, where $\delta:=\liminf_{\e\to0}\e/\gamma$, we get that
$$
F'(v,A) \ge C_1\Big((2r_0\wedge4)\int_{B_{r_0}}\int_A|\nabla_\alpha v(x_\alpha)\xi_\alpha|^p\d x_\alpha\d\xi_\alpha-|A|\Big).
$$
Estimating $|\nabla_\alpha u(x_\alpha)\xi_\alpha|^p$ by the sum of the $p$-th powers of its components and applying a rotation argument, we obtain 
$$
\int_{B_{r_0}}\int_A|\nabla_\alpha v(x_\alpha)\xi_\alpha|^p\d x_\alpha\d\xi_\alpha \ge m^{1-p} c_0\|\nabla_\alpha v\|_{L^p(A;\R^m)}^p,
\quad \text{with } c_0:=\int_{B_{r_0}}|\xi_1|^p\d\xi_\alpha,
$$
which yields the desired estimate.

The upper bound in \eqref{eq:Glim-growth-cond} follows directly from \eqref{growth-cond} and \eqref{eq:scal-ub}
\end{proof}

\subsection{Control of long-range interactions}

We now show that interactions at arbitrary range can be controlled by short-range interactions.
The following  result is analogous to \cite[Lemma 4.1]{AABPT23} (see also \cite[Lemma 1]{ABC08} and \cite[Lemma 3.6]{AC04}). 
\begin{lemma}\label{lem:short-r-control}
Let $u\in L^p(\omega\x I;\R^m)$.
For every $r>0$ there exists a constant $C=C(r,d,p)>0$ such that, for any $E\in\mathcal{A}(\omega)$ and every $\xi\in\R^d$ it holds that
$$
\int_{E\times I_\frac{\e}{\gamma}(\xi_d)}\Big|\frac{u(x_\alpha+\e\xia+\frac{\e}{\gamma}\xid)-u(x)}{\e}\Big|^p \d x \le C (|\xi|^p+1)\G_{\e,\gamma}^r(u, E_{\e,\xi_\alpha})
$$
for every $0<\e<(r+|\xi_\alpha|)^{-1}\dist(E,\R^{d-1}\setminus \omega)$, and every $\gamma\in(0,1)$, where $E_{\e,\xi_\alpha}:=E+B_{\e (r+|\xi_\alpha|)}$.
\end{lemma}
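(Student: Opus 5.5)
The plan is to prove the estimate by a chaining (telescoping) argument, as in \cite[Lemma 4.1]{AABPT23}. A long interaction $h:=\e\xia+\frac{\e}{\gamma}\xid$ is broken into $N$ short ones, and each endpoint is averaged over a small cylinder, so that every short increment becomes an admissible interaction for $\G_{\e,\gamma}^r$.

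\emph{Choice of the chain.} Set $N:=\lceil 4|\xi|/r\rceil+1$, so that $N\lesssim |\xi|/r+1$. The step $h/N=\e\frac{\xia}{N}+\frac{\e}{\gamma}\frac{\xid}{N}$ then satisfies $|\xi_\alpha|/N<r/4$ and $|\xi_d|/N<r/4$; the vertical bound matters as well, because the admissible vertical range in \eqref{eq:conv-z} scales like $\e r/\gamma$. Define the grid points $x+kh/N$ for $k=0,\dots,N$, and perturb the interior ones: for $k=1,\dots,N-1$ replace $kh/N$ by $kh/N+w_k$, with $w_k=\e\zeta^k_\alpha+\frac{\e}{\gamma}\zeta^k_d$ and $\zeta^k\in C_{r/4}$. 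Keep $w_0=w_N=0$. Every consecutive increment $h/N+w_{k+1}-w_k$ then lies in $C_{\e r}^{\e r/\gamma}$. By Jensen's inequality,
$$
|u(x+h)-u(x)|^p\le N^{p-1}\sum_{k=0}^{N-1}\big|u(x+\tfrac{(k+1)h}{N}+w_{k+1})-u(x+\tfrac{kh}{N}+w_k)\big|^p .
$$

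\emph{Averaging.} Integrate in $x\in E\x I_{\frac{\e}{\gamma}}(\xi_d)$ and average over all the $\zeta^k$ independently. In the $k$-th term only $w_k,w_{k+1}$ appear. After the change of variables $x'=x+kh/N+w_k$ and $z=h/N+w_{k+1}-w_k$, and averaging out the remaining variable, each term is bounded by
$$
\frac{1}{|C_{r/4}|\,\e^{d-1}\big(\frac{\e r}{4\gamma}\wedge1\big)}\int_{C^{(\e r/\gamma)\wedge2}_{\e r}}\int_{(E_{\e,\xi_\alpha}\x I)_1(z)}|u(x'+z)-u(x')|^p\d x'\d z .
$$
Planarly, $x'_\alpha$ lies in $E+B_{\e(|\xi_\alpha|+r)}=E_{\e,\xi_\alpha}$, which is contained in $\omega$ by the assumed smallness of $\e$. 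Comparing the normalization with the prefactor $\frac{\e\vee\gamma}{\e^d}$ in \eqref{eq:conv-z} gives, exactly as in Step 1 of Lemma \ref{lem:vert-control}, the bound $C(r,d)\,\e^p\,\G_{\e,\gamma}^r(u,E_{\e,\xi_\alpha})$, because $\frac{\e}{\e\vee\gamma}$ is comparable to $(\frac{\e r}{\gamma}\wedge 1)$. Summing the $N$ terms, dividing by $\e^p$, and using $N^p\lesssim |\xi|^p+1$ yields the claim.

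\emph{Main obstacle: the vertical constraint.} The unperturbed grid points have vertical coordinate between $x_d$ and $x_d+\frac{\e}{\gamma}\xi_d$, both in $I$, so they stay in $I$ by convexity. The random vertical perturbation $\frac{\e}{\gamma}\zeta^k_d$, however, can leave $I$, and no extension with a zero-order term is allowed. I would first try to restrict, for each $k$, the vertical perturbation to the half-interval pointing towards the centre of $I$. Its length is of order $(\frac{\e r}{\gamma}\wedge 1)$ after truncation, so the measure lower bound in the averaging step is unchanged up to a factor $2$, and every perturbed point remains in $I$. If this bookkeeping becomes awkward, the fallback is to extend $u$ vertically by reflection as in Lemma \ref{lem:ext}. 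By \eqref{eq:ext-vert} this costs only a factor $4$ in $\G^r_{\e,\gamma}$ and produces no $L^p$ term, so the planar domain $E_{\e,\xi_\alpha}$ is untouched.
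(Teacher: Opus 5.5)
Your proof is correct, but it takes a different route from the paper's.

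\textbf{The paper's route.} The paper never chains the full anisotropic increment. It uses the triangle inequality to split $u(x+\e\xia+\frac{\e}{\gamma}\xid)-u(x)$ into two pieces.
\begin{itemize}
\item A vertical increment at the shifted planar point. This is bounded by the vertical chaining estimate \eqref{eq:zed-est} from Lemma \ref{lem:vert-control}, together with the vertical reflection \eqref{eq:ext-vert}.
\item A purely planar increment. This is bounded by applying \cite[Lemma 4.1]{AABPT23} on each slice $x_d=\mathrm{const}$, and then Lemma \ref{lem:plan-control} to pass from planar short-range interactions to $\G^r_{\e,\gamma}$.
\end{itemize}
This is modular: it reuses the one-directional lemmas and handles the top and bottom of $I$ by reflection. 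It leaves implicit why no $\|u\|^p_{L^p}$ term and no dependence on $E$ survive. As stated, Lemmas \ref{lem:vert-control} and \ref{lem:plan-control} carry both. One has to notice that $E_{\e,\xi_\alpha}\subset\omega$ makes the planar extension unnecessary, so only the vertical reflection is used.

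\textbf{What your route buys, and the bookkeeping it needs.} Your single $d$-dimensional chain with inward, truncated vertical perturbations avoids extensions entirely and makes $C=C(r,d,p)$ transparent. The bookkeeping is short but should be written out.
\begin{itemize}
\item The admissible half-cylinder for $w_k$ depends on $x_d$. So bound the average over the chosen half by the integral over the full truncated cylinder.
\item Then change variables for a fixed perturbation. The map $x\mapsto x+kh/N+w_k$ is a translation into $E_{\e,\xi_\alpha}\x I$, and this costs only a factor $2$.
\item The endpoint terms $k=0$ and $k=N-1$ have only one random point. They are handled the same way, using the symmetry of the cylinder.
\item With $\tau=\frac{\e r}{4\gamma}\wedge 1$, one has $\frac{\e}{(\e\vee\gamma)\tau}\le\max\{4/r,1\}$, which closes the comparison with \eqref{eq:conv-z}.
\end{itemize}

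\textbf{On your fallback.} The reflection fallback also needs the truncation $\tau\le 1$. Untruncated perturbations of size $\frac{\e r}{4\gamma}$ leave $3I$ once $\e/\gamma$ is large, and there \eqref{eq:ext-vert} says nothing.
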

\begin{proof}
By the triangle inequality we obtain
\begin{equation}\label{eq:est1}
\begin{split}
\int_{E\x I_\frac{\e}{\gamma}(\xi_d)}\Big|\frac{u(x+\e\xia+\frac{\e}{\gamma}\xid)-u(x)}{\e}\Big|^p \d x
&\le 2^{p-1}\int_{E\x I_\frac{\e}{\gamma}(\xi_d)} \Big|\frac{u(x+\e\xia+\frac{\e}{\gamma}\xid)-u(x_\alpha+\e\xia)}{\e}\Big|^p \d x \\
&\qquad +2^{p-1}\int_{E\x I_\frac{\e}{\gamma}(\xi_d)} \Big|\frac{u(x+\e\xia)-u(x)}{\e}\Big|^p \d x.
\end{split}
\end{equation}
Reasoning as in Lemma \ref{lem:vert-control}, and applying \eqref{eq:zed-est} together with \eqref{eq:ext-vert}, we estimate the first term on the right-hand side by
\begin{equation}\label{eq:est2}
\int_{E\x I_\frac{\e}{\gamma}(\xi_d)} \Big|\frac{u(x+\e\xia+\frac{\e}{\gamma}\xid)-u(x+\e\xia)}{\e}\Big|^p \d x \le C(|\xi_d|^p+1)\G_{\e,\gamma}^r(u, E_{\e,\xi_\alpha}).
\end{equation}
To estimate the second term, we exploit \cite[Lemma 4.1]{AABPT23} in the planar variables.
For almost every $x_d\in I$, $u(\cdot,x_d)\in L^p(A;\R^m)$; hence, by Fubini's Theorem and \cite[Lemma 4.1]{AABPT23}, we obtain
$$
\int_{E\x I_\frac{\e}{\gamma}(\xi_d)}\Big|\frac{u(x+\e\xia)-u(x)}{\e}\Big|^p\d x \le C(|\xi_\alpha|^p+1)\int_{B_\frac{r}{2}}\int_{(\tilde E_{\e,\xi_\alpha})_\e(\zeta_\alpha)\x I}\Big|\frac{u(x+\e\zea)-u(x)}{\e}\Big|^p\d x\d\zeta_\alpha,
$$
where $\tilde E_{\e,\xi_\alpha}=E+B_{\e(\frac{r}{2}+|\xi_\alpha)}$.
Applying Lemma \ref{lem:plan-control} with $A=\tilde E$, and using \eqref{eq:ext-vert}, we conclude that
\begin{equation}\label{eq:est3}
\int_{E\x I_\frac{\e}{\gamma}(\xi_d)}\Big|\frac{u(x+\e\xia)-u(x)}{\e}\Big|^p\d x \le C(|\xi_\alpha|^p+1)\int_{B_\frac{r}{2}}\G_{\e,\gamma}^r(u,E_{\e,\xi_\alpha})\d\zeta_\alpha.
\end{equation}
Gathering \eqref{eq:est1}--\eqref{eq:est3} yields the desired inequality.
\end{proof}

Note that Lemma \ref{lem:short-r-control} generalizes the estimates of Lemmas \ref{lem:vert-control} and \ref{lem:plan-control}.
Nevertheless, we choose to present the two arguments separately, since planar and vertical interactions are often treated independently.

\begin{definition}[Truncated functionals]
For any $A\in\mathcal{A}(\omega)$ and $T>0$, we define the truncated functionals $F_{\e,\gamma}^T:L^p(\Omega^\gamma;\R^m)\x\A(\omega)\to[0,+\infty]$ and $\F_{\e,\gamma}^T:L^p(\omega\x I;\R^m)\x\A(\omega)\to[0,+\infty]$ as
\begin{equation}\label{eq:trunc}
\begin{split}
F_{\e,\gamma}^T(v,A) &:=\Big(\frac{\e}{\gamma^2}\vee\frac{1}{\gamma}\Big)\int_{B_T\x\R}\int_{(A\x \gamma I)_{\e}(\xi)} f_\e(x,\xi,D_{\e}^\xi v(x))\d x\, \d\xi,\\
\F_{\e,\gamma}^T(u,A) &:=\Big(\frac{\e}{\gamma}\vee1\Big)\int_{B_T\x\R}\int_{(A\x I)_{\e,\frac{\e}{\gamma}}(\xi)} \tilde f_\e(x,\xi,D_{\e,\frac{\e}{\gamma}}^\xi u(x))\d x\, \d\xi.
\end{split}
\end{equation}
\end{definition}

In the sequel, it will often be convenient to first work with the truncated functionals and then recover the full energy. To this end, we state the following technical result, whose proof is only sketched being identical to that of its full-dimensional analog \cite[Lemma 5.1]{AABPT23}.

\begin{lemma}\label{lem:trunc}
Let $F_{\e,\gamma}$ and $F_{\e,\gamma}^T$ be defined as in \eqref{loc-functionals} and \eqref{eq:trunc}, respectively, and assume that \eqref{eq:h0}--\eqref{eq:h4} hold.
Then for every $A\in\Ar(\omega)$ and $v\in L^p(\omega; \R^m)$ it holds that
\begin{align*}
F'(v,A) &=\lim_{T\to+\infty}\big(\Gliminf_{\e\to0}F_{\e,\gamma}^T(v,A)\big), \\
F''(v,A) &=\lim_{T\to+\infty}\big(\Glimsup_{\e\to0}F_{\e,\gamma}^T(v,A)\big),
\end{align*}
where the $\Gamma$-limits are taken with respect to the convergence in Definition \ref{def:conv}.
\end{lemma}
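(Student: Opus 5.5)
Since $f_\e\ge0$, we have $F^T_{\e,\gamma}(v,A)\le F_{\e,\gamma}(v,A)$, and $F^T_{\e,\gamma}$ is nondecreasing in $T$. Hence $\Gamma$-$\liminf_\e F^T_{\e,\gamma}\le F'$ and $\Gamma$-$\limsup_\e F^T_{\e,\gamma}\le F''$. Both families are monotone in $T$, so the limits in $T$ exist and satisfy the ``$\le$'' inequalities. The plan is therefore to prove the reverse inequalities. For this it suffices to show that the tail energy
$$
R^T_{\e}(u,A):=\Big(\frac{\e}{\gamma}\vee1\Big)\int_{(\R^{d-1}\setminus B_T)\x\R}\int_{(A\x I)_{\e,\frac{\e}{\gamma}}(\xi)}\tilde f_\e\big(x,\xi,D^\xi_{\e,\frac{\e}{\gamma}}u(x)\big)\d x\d\xi
$$
is small along suitable sequences, uniformly in $\e$ small. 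We work with the rescaled functionals throughout.

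\emph{Estimating the tail.} By \eqref{eq:GC}, the tail is bounded by $C$ times a $\psi_\e|D u|^p$ term plus a $\rho$ term. For the $\psi_\e$ term, apply Lemma \ref{lem:short-r-control} with $r=r_0$ on a set $E\Subset A$:
$$
\int_{E\x I_{\frac{\e}{\gamma}}(\xi_d)}|D^\xi_{\e,\frac{\e}{\gamma}}u|^p\d x\lesssim(|\xi|^p+1)\,\G^{r_0}_{\e,\gamma}(u,A).
$$
Integrating against $\psi_\e$ over $|\xi_\alpha|>T$ gives the following. If $\e\lesssim\gamma$, the prefactor is $1$ and \eqref{eq:h2} (together with \eqref{eq:h1}, which controls $\int\psi_\e$ on $|\xi_\alpha|>T\ge1$) makes the tail $\le\eta\,\G^{r_0}_{\e,\gamma}(u,A)$. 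If $\e\gg\gamma$, the admissible vertical range is $|\xi_d|<2\gamma/\e$, so the prefactor $\e/\gamma$ is compensated by integrating in $\xi_d$ over that range, and \eqref{eq:h4} handles the $\sup_{|\xi_d|<2}$ planar tail. The $\rho$ contribution is treated in the same way, using the $\rho$ part of \eqref{eq:h4}; it gives $o_T(1)|A|$. By \eqref{growth-cond}, $\G^{r_0}_{\e,\gamma}(u,A)\lesssim F_{\e,\gamma}(u,A)+|A|$.

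\emph{Handling the boundary layer.} Lemma \ref{lem:short-r-control} needs an inner set $E$ and controls long interactions only inside $E$. Interactions starting near $\partial A$ fall outside its scope. In the spirit of \cite[Lemma 5.1]{AABPT23}, I would prove the inequality for $A'\Subset A$ and then exploit inner regularity, or the fundamental estimate, of $F',F''$ in the set variable. Alternatively, I would use the extension Lemma \ref{lem:ext} to move the problem to $\tilde A\Supset A$, at the cost of an $L^p$ term. The cleanest route seems to be the following. Take a recovery sequence (respectively, a liminf sequence) $u_\e$ for $\lim_T$ of the truncated functionals on a slightly larger set $A''\Supset A$. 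Then control $R^T_\e(u_\e,A)$ with $E=A$ and ambient $A''$, and finally let $A''\downarrow A$, using the growth bound of Proposition \ref{liminf-limsupBound} to make $F''(v,A''\setminus\bar A)$ small. This boundary step is the main obstacle: we must check that the $\Gamma$-limits of the truncated functionals also satisfy the needed regularity in $A$, which follows from the same upper bound $\lesssim\|\nabla_\alpha v\|^p+|A|$.

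\emph{Concluding.} For the $\Gamma$-limsup, take $u_\e$ with $\limsup F^T_{\e,\gamma}(u_\e,A'')\le$ the truncated $\Gamma$-limsup. Then $F_{\e,\gamma}(u_\e,A)\le F^T_{\e,\gamma}(u_\e,A'')+\eta_T(F_{\e,\gamma}(u_\e,A'')+|A''|)$ with $\eta_T\to0$. To close this estimate we need a uniform bound on $F_{\e,\gamma}(u_\e,A'')$. I would get it by a diagonal choice, or by first noting that $\G^{r_0}$ is bounded by $F^T$ for $T\ge r_0$, so that everything is bounded in terms of $F^T$. The liminf inequality is analogous: it reads $F_{\e,\gamma}\ge F^T_{\e,\gamma}$ trivially in one direction, and in the other the same tail estimate is applied along any converging sequence of bounded energy. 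Letting $\e\to0$, then $T\to\infty$, then $A''\downarrow A$ gives the equalities.
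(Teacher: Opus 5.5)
Your tail estimate is the one the paper uses. It combines Lemma \ref{lem:short-r-control} with \eqref{eq:h2} when $\e\lesssim\gamma$, and with \eqref{eq:h4} plus Fubini when $\e>\gamma$, where the factor $\e/\gamma$ is compensated by the vertical range $|\xi_d|<2\gamma/\e$. It also uses $\G^{r_0}_{\e,\gamma}\lesssim F^T_{\e,\gamma}+|A|$ for $T\ge r_0$.

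The gap is in the boundary step you single out as the cleanest. Using a recovery sequence on some $A''\Supset A$ requires $A''\subset\omega$, because $f_\e$ is only defined on $\Omega^\gamma$, and so are $F_{\e,\gamma}(\cdot,A'')$ and $F^T_{\e,\gamma}(\cdot,A'')$. This route therefore only treats $A\Subset\omega$. The lemma, however, is stated for every $A\in\Ar(\omega)$, in particular for $A=\omega$. (One could extend the densities outside $\Omega^\gamma$, e.g. by $\psi_\e|z|^p$, but that is not in your sketch.)

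Even for $A\Subset\omega$, the passage $A''\downarrow A$ does not follow from the upper bound of Proposition \ref{liminf-limsupBound} alone. You must split $A''$ into a set compactly contained in $A$ and a thin layer around $\partial A$. That is, you need the fundamental estimate for the \emph{truncated} $\Gamma$-limits at fixed $T$, which is the argument of Proposition \ref{subadditivity} before its last line. Your other suggestion is circular: proving the bound on $A'\Subset A$ and then invoking inner regularity or subadditivity of the full $F',F''$ fails because Propositions \ref{subadditivity} and \ref{InnerReg} are derived from this very lemma.

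The missing idea is to extend the function rather than the set. You mention this only in passing, but it is what the paper does.

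\begin{itemize}
\item Take any sequence $u_\e$ bounded in $L^p(A\x I)$ and in $\F^T_{\e,\gamma}(\cdot,A)$, hence bounded in $\G^{r_0}_{\e,\gamma}(\cdot,A)$.
\item Lemma \ref{lem:ext} gives $\tilde u_\e$ on $\tilde A\x3I$ with $\G^{\tilde r}_{\e,\gamma}(\tilde u_\e,\tilde A\x3I)\le C(\G^{r_0}_{\e,\gamma}(u_\e,A)+\|u_\e\|^p_{L^p(A\x I)})$.
\item Apply Lemma \ref{lem:short-r-control} to $\tilde u_\e$ with $E=A$ and ambient set $\tilde A$. This controls all interactions in $(A\x I)_{\e,\frac{\e}{\gamma}}(\xi)$, including those near $\partial A$. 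Only the convolution energy of $\tilde u_\e$ enters; no density outside $\Omega^\gamma$ is involved.
\end{itemize}

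The result is $\F_{\e,\gamma}(u_\e,A)\le\F^T_{\e,\gamma}(u_\e,A)+o_T(1)$ on the same set $A$ and along the same sequence. Both equalities then follow at once, with no regularity in the set variable required.

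In either route, note that Lemma \ref{lem:short-r-control} requires $\e(r+|\xi_\alpha|)<\dist(A,\R^{d-1}\setminus\tilde A)$. The range $|\xi_\alpha|\ge c/\e$ therefore has to be handled separately. One way is the crude bound $\int|D^\xi_{\e,\frac{\e}{\gamma}}u_\e|^p\le2^p\e^{-p}\|u_\e\|^p_{L^p(A\x I)}$, combined with $\e^{-p}\int_{\{|\xi_\alpha|>c/\e\}}\psi_\e\le c^{-p}\int_{\{|\xi_\alpha|>c/\e\}}\psi_\e|\xi|^p$ and \eqref{eq:h2}, resp.\ \eqref{eq:h4}.
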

\begin{proof}
Since $\F_{\e,\gamma}^T\le \F_{\e,\gamma}$, it suffices to prove the reverse inequality.
By \eqref{eq:h0} and Theorem \ref{kolcom}, the result holds trivially for any $v\not\in W^{1,p}(A;\R^m)$. Thus, let $u_\e$ be a family uniformly bounded in energy and in $L^p$-norm.
By \eqref{eq:GC} we get
\begin{multline*}
\F_{\e,\gamma}(u_\e,A) \le \F_{\e,\gamma}^T(u_\e,A)
+ C\Big(\frac{\e}{\gamma}\vee1\Big)\int_{\R^d\setminus(B_T\x\R)}\int_{(A\x I)_{\e,\gamma}(\xi)}\psi_\e(\xi)|D_{\e,\frac{\e}{\gamma}}^\xi u_\e(x)|^p+\rho(\xi)\d x\d\xi.
\end{multline*}
Applying Lemma \ref{lem:short-r-control} and Lemma \ref{lem:ext}, we deduce 
\begin{align*}
\F_{\e,\gamma}(u_\e,A) \le \F_{\e,\gamma}^T(u_\e,A) + C\Big(\frac{\e}{\gamma}\vee1\Big)\int_{(\R^{d-1}\setminus B_T)\x\frac{2\gamma}{\e}I}\psi_\e(\xi)(|\xi|^p+1)+\rho(\xi)\d\xi.
\end{align*}
If $\e\le\gamma$, the conclusion follows from the locality assumption \eqref{eq:h2}; if $\e>\gamma$, it follows from \eqref{eq:h4} combined with Fubini's Theorem.
\end{proof}

\subsection{Measure properties of the localized functionals}

We now provide (a consequence of) the \emph{Fundamental Estimate} for both the $\Gamma$-$\limsup$ and the $\Gamma$-$\liminf$, which yield their inner regularity and subadditivity as set functions.

\begin{proposition}[Subadditivity]\label{subadditivity}
Let $F_{\e,\gamma}$ be defined as in \eqref{loc-functionals} and assume that \eqref{eq:h0}--\eqref{eq:h4} hold.
Let $F', F''$ be as in \eqref{eq:Gliminfsup}, and let $A,A',B,B'\in\mathcal{A}(\omega)$ with $A'\Subset A$ and $B'\Subset B$.
Then, for every $v\in W^{1,p}(\omega; \R^m)$, it holds that
$$
F''(v,A'\cup B') \le F''(v,A)+F''(v,B), \quad F'(v,A'\cup B') \le F'(v,A)+F''(v,B).
$$
\end{proposition}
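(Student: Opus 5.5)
The plan is to prove a \emph{fundamental estimate} for the rescaled functionals $\F_{\e,\gamma}$ by the usual cut-off argument, working with the truncated functionals of Lemma \ref{lem:trunc} and using Lemma \ref{lem:short-r-control} to control the interactions across the transition layer. We work throughout with $\F^T_{\e,\gamma}$ for fixed $T$ and pass to $T\to+\infty$ only at the end via Lemma \ref{lem:trunc}.

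\emph{The joining construction.} Fix $v\in W^{1,p}(\omega;\R^m)$, identified with $u(x)=v(x_\alpha)$. For the $F''$ inequality, take recovery sequences $u_\e\to u$ for $F''(v,A)$ and $w_\e\to u$ for $F''(v,B)$ in $L^p(\Omega)$; for the $F'$ inequality, take $u_\e$ realizing $F'(v,A)$ along a subsequence. Set $\eta:=\dist(A',\R^{d-1}\setminus A)>0$ and fix $N\in\N$. For $i=1,\dots,N$, let $A_i:=\{x_\alpha : \dist(x_\alpha,A')<i\eta/(N+1)\}$, and choose cut-offs $\varphi_i\in C^\infty_c(A_{i+1})$ depending only on $x_\alpha$, with $\varphi_i=1$ on $A_i$ and $|\nabla\varphi_i|\le 2(N+1)/\eta$. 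Define
\[
z^i_\e:=\varphi_i u_\e+(1-\varphi_i)w_\e .
\]
Since the cut-off is planar, vertical differences of $z^i_\e$ are convex combinations of those of $u_\e$ and $w_\e$.

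\emph{Splitting the energy.} For $x$ and $x+\e\xia+\frac{\e}{\gamma}\xid$ with $|\xi_\alpha|<T$, and $\e T\ll\eta/N$, the points lie in one of three regions. If both lie in $A_i$, the energy is that of $u_\e$. If both lie outside $\overline{A_{i+1}}$, the energy is that of $w_\e$, which lives in $B$ because $A'\cup B'\subset A\cup B$; here $x_\alpha\in B'$ or in the part of $A\setminus A_{i+1}$ contained in $B$, and the portion in $A'\setminus B$ is excluded since $A'\subset A_i$. Otherwise both points lie in the layer $S_i:=(A_{i+1}\setminus \overline{A_{i-1}})\cap B'$ enlarged by $\e T$. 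In the layer we use the discrete product rule
\[
D^\xi z = \varphi(x+\cdot)\,D^\xi u_\e+(1-\varphi(x+\cdot))\,D^\xi w_\e+\frac{\varphi(x+\e\xia)-\varphi(x)}{\e}\,(u_\e-w_\e)(x),
\]
together with the upper growth \eqref{eq:GC}. Because $\varphi_i$ is $x_d$-independent, the last term is bounded by $|\nabla\varphi_i|\,|\xi_\alpha|\,|u_\e-w_\e|$, and $(\frac{\e}{\gamma}\vee1)\int\psi_\e|\xi_\alpha|^p$ over $|\xi_d|<2\gamma/\e$ is bounded by \eqref{eq:h1} or \eqref{eq:h3}, exactly as in \eqref{eq:scal-ub}. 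The remaining layer terms are bounded by $C\big(\G_{\e,\gamma}[\psi_\e](u_\e,S_i^\e)+\G_{\e,\gamma}[\psi_\e](w_\e,S_i^\e)+|S_i^\e|\big)$. Through Lemma \ref{lem:short-r-control} and the lower bound \eqref{growth-cond}, these are in turn controlled by $\G^{r_0}_{\e,\gamma}$ on slightly enlarged layers, and hence by the $F_{\e,\gamma}$-energies there.

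\emph{Averaging and conclusion.} The enlarged layers overlap only boundedly often, so summing over $i$ and choosing the index $i$ with minimal layer contribution gives
\[
\F^T_{\e,\gamma}(z^i_\e,A'\cup B')\le \F_{\e,\gamma}(u_\e,A)+\F_{\e,\gamma}(w_\e,B)+\frac{C}{N}\Big(\sup_\e\big(\F_{\e,\gamma}(u_\e,A)+\F_{\e,\gamma}(w_\e,B)\big)+|A\cup B|\Big)+C\Big(\frac{N}{\eta}\Big)^p\|u_\e-w_\e\|^p_{L^p}.
\]
Since $z^i_\e\to u$, we let $\e\to0$, then $T\to\infty$ using Lemma \ref{lem:trunc}, and finally $N\to\infty$. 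This yields both inequalities; for $F'$ we pass to a subsequence along which $u_\e$ realizes the liminf.

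\emph{Main obstacle.} The delicate step is the layer estimate. Long-range planar interactions $|\xi_\alpha|\le T$ must be absorbed with constants independent of the regime of $\e/\gamma$, and the vertical interactions with $\e/\gamma\to\infty$ must not spoil the scaling. This is why we truncate in the planar variable only and rely on \eqref{eq:h3}--\eqref{eq:h4} in Lemma \ref{lem:trunc}. The boundedness of the energies of $u_\e$ and $w_\e$, needed for the $1/N$ term, follows from Proposition \ref{liminf-limsupBound}.
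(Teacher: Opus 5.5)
Your overall scheme matches the paper's proof: planar truncation to $\F^T_{\e,\gamma}$, a planar cut-off $\varphi_i$ between the nested sets $A_i$, and the discrete product rule, with the cut-off gradient term handled by \eqref{eq:h1}/\eqref{eq:h3} as in \eqref{eq:scal-ub}. It also uses De Giorgi's averaging over $i$, with bounded overlap of the layers for $\e T\ll\eta/N$, and Lemma~\ref{lem:trunc} to remove the truncation. Keeping full energies on the right-hand side is a harmless simplification. Your three-region decomposition is also correct. For $\e T<\eta/(N+1)$, both points of a layer pair lie in $B'$ and in $A$.

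The step that fails as written is the layer estimate, where you pass from $\G_{\e,\gamma}[\psi_\e](u_\e,S_i^\e)$ to $\G^{r_0}_{\e,\gamma}$ on enlarged layers through Lemma~\ref{lem:short-r-control}. That lemma bounds $\int|D^\xi u|^p$ by $C(|\xi|^p+1)\,\G^{r_0}_{\e,\gamma}$ for each fixed $\xi$, and the constant does not vanish as $\xi\to0$. After multiplying by $\psi_\e(\xi)$ and integrating over $B_T\times\R$, you would need a uniform bound on $(\frac{\e}{\gamma}\vee1)\int_{B_T\times\frac{2\gamma}{\e}I}\psi_\e(\xi)\,\d\xi$, i.e.\ integrability of $\psi_\e$ at the origin. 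Hypotheses \eqref{eq:h0}--\eqref{eq:h4} only control $\psi_\e|\xi|^p$. The model kernel $\psi_\e(\xi)=\psi(\xi)|\xi|^{-p}$ of Example~\ref{ex} is not integrable near $0$ once $p\ge d$, so this detour gives an infinite constant.

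The fix is to drop short-range control here and use the lower bound in \eqref{eq:GC} pointwise:
$$
c\,\psi_\e(\xi)|z|^p\le f_\e(x,\xi,z)+c\,\rho(\xi).
$$
Apply it with $z=D^\xi_{\e,\frac{\e}{\gamma}}u_\e$ and with $z=D^\xi_{\e,\frac{\e}{\gamma}}w_\e$ on the layer pairs. This bounds the layer contribution directly by the truncated energies of $u_\e$ on $A$ and of $w_\e$ on $B$, restricted to those pairs. There is an extra $\rho$-term of order $|S_i^\e|$, controlled by $\rho\in L^1$, and by \eqref{eq:h3} when $\e/\gamma$ is large. Summing over $i$ with bounded overlap and choosing the best index then gives your displayed estimate. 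This is exactly how the paper proceeds when it invokes ``the growth condition from below in \eqref{eq:GC}''.
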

\begin{proof}
We reduce to interaction energies with finite range $T>0$, arbitrarily large.
By Lemma \ref{lem:trunc}, this suffices to recover the full statement.

Let $\e_j,\gamma_j\to0$ as $j\to\infty$, and fix $T>0$. Let $u_j, v_j\in L^p(\omega\x I; \R^m)$ be such that $u_j\to u$ and $v_j\to u$ in $L^p(\omega\x I;\R^m)$ and
$$
\lim_{j\to\infty} \F_{\e_j,\gamma_j}^{T}(u_j,A) = \Glimsup_{j\to\infty}\F_{\e_j,\gamma_j}^{T}(u,A),
\quad \text{and} \quad
\lim_{j\to\infty} \F_{\e_j,\gamma_j}^{T}(v_j,B) = \Glimsup_{j\to\infty}\F_{\e_j,\gamma_j}^{T}(u,B).
$$
By a standard cut-off procedure, let $R:=\dist(A',\R^{d-1}\setminus A)$ and fix $N\in\N$. Define
$$
A_i:=\Big\{x\in A: \dist(x,A') <i\frac{R}{N}\Big\}, \qquad 1\le i\le N,
$$
and let $\varphi_i\in C_c^\infty(\R^{d-1})$ be a cut-off function between $A_i$ and $A_{i+1}$\ie $\varphi_i\equiv 1$ in $A_i$, $\varphi_i\equiv 0$ outside $A_{i+1}$, and $\|\nabla\varphi_i\|_{L^\infty}\le 2\frac{N}{R}$.
Set $w_j^{(i)}(x):=\varphi_i(x_\alpha)u_j(x)+(1-\varphi_i(x_\alpha))v_j(x)$.
We notice that (cf.\ the proof of \cite[Proposition 5.1]{AABPT23} for more details)
$$
D_{\e_j,\frac{\e_j}{\gamma_j}}^\xi w_j^{(i)}(x)=
\begin{cases}
D_{\e_j,\frac{\e_j}{\gamma_j}}^\xi u_j(x)\,, & x\in (A_i)_{\e_j}(\xi_\alpha)\times I_{\frac{\e_j}{\gamma_j}}(\xi_d) \\[4pt]
D_{\e_j,\frac{\e_j}{\gamma_j}}^\xi v_j(x)\,, & x\in (\omega\setminus \overline{A_{i+1}})_{\e_j}(\xi_\alpha)\times I_{\frac{\e_j}{\gamma_j}}(\xi_d),
\end{cases}
$$
whereas, on the transition region, we have
\begin{multline*}
\big|D_{\e_j,\frac{\e_j}{\gamma_j}}^\xi w_j^{(i)}(x)\big| \le \big|D_{\e_j,\frac{\e_j}{\gamma_j}}^\xi u_j(x)\big| +\big|D_{\e_j,\frac{\e_j}{\gamma_j}}^\xi v_j(x)\big| \\
+\frac{2N}{R}|\xi_\alpha| \big|u_j(x+\e_j\xia+\tfrac{\e_j}{\gamma_j}\xid)-v_j(x+\e_j\xia+\tfrac{\e_j}{\gamma_j}\xid)\big|,
\end{multline*}
for $x\in S_{\e_j,\xi_\alpha}^{i}\times I_{\frac{\e_j}{\gamma_j}}(\xi_d)$ 
where $S_{\e_j,\xi_{\alpha}}^i:=(A'\cup B')_{\e_j}(\xi_\alpha)\setminus\big((A_i)_{\e_j}(\xi_\alpha)\cup(\omega\setminus \overline{A_{i+1}})_{\e_j}(\xi_\alpha))\big)$.
Exploiting the growth conditions \eqref{eq:GC}, we deduce
\begin{equation}\label{eq:fun-est-1}
\begin{split}
&\F^T_{\e_j,\gamma_j}(w_j^{(i)},A'\cup B') \\
& \quad \le \F^T_{\e_j,\gamma_j}(u_j,A)+\F^T_{\e_j,\gamma_j}(v_j,B) \\
& \quad\quad +C\Big(\frac{\e_j}{\gamma_j}\vee1\Big)\int_{B_T\x\R} \int_{S_{\e_j,\xi_\alpha}^i\times I_{\frac{\e_j}{\gamma_j}}(\xi_d)} \psi_{\e_j}(\xi) \big(|D^\xi_{\e_j,\frac{\e_j}{\gamma_j}} u_{j}(x)|^p+|D^\xi_{\e_j,\frac{\e_j}{\gamma_j}} v_j(x)|^p\big)+\rho(\xi) \d x\d \xi \\
& \quad\quad + C \Big(\frac{\e_j}{\gamma_j}\vee1\Big) N^p \|u_j-v_j\|^p_{L^p(\omega\x I;\R^m)}\int_{B_T\x\frac{2\gamma_j}{\e_j}I}\psi_{\e_j}(\xi)|\xi_\alpha|^p\d\xi\,;
\end{split}
\end{equation}
and, by \eqref{eq:h1} and \eqref{eq:h3},  \eqref{eq:fun-est-1} yields
\begin{equation}\label{eq:fun-est}
\begin{split}
&\F^T_{\e_j,\gamma_j}(w_j^{(i)},A'\cup B') \\
& \quad \le \F^T_{\e_j,\gamma_j}(u_j,A)+\F^T_{\e_j,\gamma_j}(v_j,B) \\
& \quad\quad +C\Big(\frac{\e_j}{\gamma_j}\vee1\Big)\int_{B_T\x\R}\int_{S_{\e_j,\xi_\alpha}^i\times I_{\frac{\e_j}{\gamma_j}}(\xi_d)} \psi_{\e_j}(\xi)\big(|D^\xi_{\e_j,\frac{\e_j}{\gamma_j}} u_{j}(x)|^p+|D^\xi_{\e_j,\frac{\e_j}{\gamma_j}} v_j(x)|^p\big)+\rho(\xi) \d x\d \xi \\
& \quad\quad + C N^p \|u_j-v_j\|^p_{L^p(\omega\x I;\R^m)},
\end{split}
\end{equation}
for $j$ large enough and up to increase the value of $C$ if needed.

We now apply an averaging argument due to De Giorgi. Since $|\xi_\alpha|\le T$, the sets $S_{\e_j,\xi_\alpha}^i$ overlap at most finitely many times, for $j$ large enough.
Moreover, we have
$$
\bigcup_{i=1}^{N-4} S_{\e_j,\xi}^i\subset (A_{N-2}\setminus \bar{A^\prime})\cap B'\,.
$$ 
Summing over $i$ and using the growth condition from below in \eqref{eq:GC}, we obtain
\begin{align*}
&\sum_{i=1}^{N-4}\Big(\frac{\e_j}{\gamma_j}\vee1\Big)\int_{B_T\x\R} \int_{S_{\e_j,\xi_\alpha}^i\times I_{\frac{\e_j}{\gamma_j}}(\xi_d)} \psi_{\e_j}(\xi) \big(|D^\xi_{\e_j,\frac{\e_j}{\gamma_j}} u_j(x)|^p+|D^\xi_{\e_j,\frac{\e_j}{\gamma_j}} v_j(x)|^p\big)+\rho(\xi) \d x\d \xi \\
&\quad \le C \int_{B_T\x\R} \int_{((A_{N-2}\setminus \bar{A^\prime})\cap B')\x I_{\frac{\e_j}{\gamma_j}}(\xi_d)} \psi_{\e_j}(\xi) \big(|D^\xi_{\e_j,\frac{\e_j}{\gamma_j}} u_j(x)|^p+|D^\xi_{\e_j,\frac{\e_j}{\gamma_j}} v_j(x)|^p\big)+\rho(\xi) \d x\d \xi \\
&\quad \le C'\big(\F^T_{\e_j,\gamma_j}(u_j,A)+\F^T_{\e_j,\gamma_j}(v_j,B)+|A\cup B|\big),
\end{align*}
for some $C'>0$. Hence, there exists $i_0$ such that the corresponding term is bounded by the average, namely
\begin{equation}\label{eq:fun-est-2}
\begin{split}
\Big(\frac{\e_j}{\gamma_j}\vee1\Big)\int_{B_T\x\R} \int_{S_{\e_j,\xi_\alpha}^{i_0}\times I_{\sfrac{\e_j}{\gamma_j}}(\xi_d)} \psi_{\e_j}(\xi) & \big(|D^\xi_{\e_j,\frac{\e_j}{\gamma_j}} u_j(x)|^p+|D^\xi_{\e_j,\frac{\e_j}{\gamma_j}} v_j(x)|^p\big)+\rho(\xi) \d x\d \xi \\
&\le \frac{C'}{N} \big(\F^T_{\e_j,\gamma_j}(u_j,A)+\F^T_{\e_j,\gamma_j}(v_j,B)+|A\cup B|\big).
\end{split}
\end{equation}
Combining the estimates \eqref{eq:fun-est} (with $i=i_0$) and \eqref{eq:fun-est-2}, we obtain
\begin{multline*}
\F^T_{\e_j,\gamma_j}(w_j^{(i_0)},A'\cup B') \\
\le \Big(1+\frac{C'}{N}\Big)\big(\F^T_{\e_j,\gamma_j}(u_j,A)+\F^T_{\e_j,\gamma_j}(v_j,B)\big)+\frac{C'}{N}|A\cup B|+C N^p \|u_j-v_j\|^p_{L^p(\omega\x I;\R^m)}.
\end{multline*}
Passing to the $\limsup$ as $j\to\infty$, using the convergence of $u_j$ and $v_j$, and by  the arbitrariness of $N$ and Lemma \ref{lem:trunc}, we conclude the proof for $F''$.

The argument for $F'$ follows analogously.
\end{proof}

With an analogous cut-off argument, we control interactions near the boundary and obtain the following result.

\begin{proposition}[Inner regularity]\label{InnerReg}
Let $F_{\e,\gamma}$ be defined as in \eqref{loc-functionals} and assume that \eqref{eq:h0}--\eqref{eq:h4} hold.
Let $F'$ and $F''$ be as in \eqref{eq:Gliminfsup}, and let $A\in \Ar(\omega)$.
Then, for every $v\in W^{1,p}(A; \R^m)$, it holds that
$$
F'(v,A) = \sup_{A'\Subset A} F'(v,A'), \quad F''(v,A) = \sup_{A'\Subset A} F''(v,A').
$$
\end{proposition}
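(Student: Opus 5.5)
The inequality $\sup_{A'\Subset A}F^{(\cdot)}(v,A')\le F^{(\cdot)}(v,A)$ is immediate from monotonicity of $A\mapsto F_{\e,\gamma}(u,A)$, since $f_\e\ge0$ and $(A'\x I)_{\e,\frac{\e}{\gamma}}(\xi)\subset(A\x I)_{\e,\frac{\e}{\gamma}}(\xi)$. So the whole task is the reverse inequality. By Lemma \ref{lem:trunc}, it suffices to prove it for the truncated functionals $\F^T_{\e,\gamma}$ with $T$ fixed, up to an error vanishing as $T\to+\infty$. We then let $T\to\infty$, noting that the error produced by Lemma \ref{lem:trunc} is uniform in the open set; this is a point to check, and it holds because the estimate there depends only on the tails of $\psi_\e,\rho$ and on energy bounds.

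\emph{Step 1: a boundary layer set $B$.} Fix $\eta>0$ and $v\in W^{1,p}(A;\R^m)$. Since $A$ is Lipschitz, extend $v$ to $\tilde v\in W^{1,p}(\R^{d-1};\R^m)$. Choose $A''\Subset A'\Subset A$ and an open set $B$ with $\overline{B}\subset$ a neighborhood of $\partial A$ such that $A\subset A''\cup B$, $B\cap A\subset A\setminus\overline{A''}$ (a thin strip), and $\|\nabla_\alpha \tilde v\|^p_{L^p(B)}+|B|<\eta$. Here I would take $B$ to be a thin open neighbourhood of $\partial A$ intersected with a slightly larger Lipschitz set $\hat A\Supset A$, so that the relevant domains stay inside $\omega$ after extension, or use Lemma \ref{lem:ext} to make sense of energies on $B\setminus A$.

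\emph{Step 2: cut-off.} Take a recovery sequence $u_j\to v$ for $F''(v,A')$ (respectively a liminf-optimal sequence for $F'$). For the boundary strip, use the sequence $w_j(x):=\tilde v(x_\alpha)$, constant in $x_d$. By \eqref{growth-cond} and the computation \eqref{eq:scal-ub} on the strip $B$, we have $\limsup_j\F_{\e_j,\gamma_j}(w_j,B\cap A)\le C\eta$. I would then repeat verbatim the De Giorgi averaging of Proposition \ref{subadditivity}, with $A''\Subset A'$ and $(B\cap A)$, gluing $u_j$ and $w_j$ across $N$ layers in $A'\setminus\overline{A''}$. The cross terms are controlled by $CN^p\|u_j-w_j\|^p_{L^p}\to0$ and by $\frac{C}{N}(\text{energies}+|A|)$. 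This gives
$$
F''(v,A)\le\Big(1+\tfrac{C}{N}\Big)\big(F''(v,A')+C\eta\big)+\tfrac{C}{N}|A|,
$$
and the same with $F'$ on the right for the liminf version (one uses $F'(v,A'\cup\cdot)\le F'(v,A')+F''(v,\cdot)$, exactly the second inequality of Proposition \ref{subadditivity}). Letting $N\to\infty$ and $\eta\to0$ concludes.

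\emph{Main obstacle.} The main difficulty is the interactions that cross the boundary of $A$ inside the strip: the glued function must be defined on $(A\x I)$, where nonlocal differences reach distance $\e T$ outside the support considered. Moreover, the upper bound on $B\cap A$ has to come with a constant independent of $j$ and of the position of $B$, uniformly in the regime $\e/\gamma\in(0,\infty)$. I would handle this by using that $w_j$ is a fixed $W^{1,p}$ function independent of $x_d$, so vertical differences vanish and only planar ones remain. The bound \eqref{eq:scal-ub} localized to $B_{\e T}$-neighbourhoods then applies, using \eqref{eq:h1} when $\e\lesssim\gamma$ and \eqref{eq:h3} otherwise. The energy is thus at most $C\|\nabla_\alpha\tilde v\|^p_{L^p(B+B_{\e T})}+C|B|$, which is below $C\eta$ for $j$ large.
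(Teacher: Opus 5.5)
Your proposal is correct and follows essentially the paper's argument. The paper also takes a thin boundary strip $A\setminus\overline{A_\eta}$ on which the $\Gamma$-limsup is $O(\eta)$, using the upper bound obtained by testing with $v$ itself, and glues it to a recovery (or liminf-optimal) sequence on $A'$ via the De Giorgi cut-off of Proposition \ref{subadditivity} applied with $B'=B$. The uniformity of Lemma \ref{lem:trunc} in the open set that you flag is not actually needed: truncation only has to be removed on the fixed set $A$, and on $A'$ you simply use $\F^T_{\e,\gamma}\le\F_{\e,\gamma}$.
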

\begin{proof}
Since $F_{\e,\gamma}(u,\cdot)$ are increasing set functions, it suffices to prove that
$$
F''(u,A) \le \sup_{A'\Subset A} F''(u,A').
$$
Let $\eta>0$, consider an open set $A_\eta\Subset A$ such that 
$$|A\setminus \overline{A_\eta}|+ \Vert \nabla_\alpha u\Vert_{L^p(A\setminus \overline{A_\eta};\R^m)}< \eta\,,
$$ 
and let $A'\in \mathcal{A}(\omega)$ be such that $A_\eta\Subset A'\Subset A$.
By Proposition \ref{liminf-limsupBound} and the properties of $A_\eta$, we obtain
$$
F''(u, A\setminus \overline{A_\eta}) \le C_2\eta.
$$
A closer inspection of the proof of Proposition \ref{subadditivity} shows that, if $(\partial B\setminus\partial\omega)\subset A$, the argument still applies with $B'=B$ (see \cite[Proposition 3.2]{AABPT23} for details).
Hence,
$$
F''(u,A)=F''(u,A'\cup(A\setminus\overline{A_\eta}))\le F''(u,A')+F''(u,A\setminus\overline{A_\eta})\le F''(u,A')+C_2\eta\,.
$$
Taking the supremum over $A'$ and using the arbitrariness of $\eta$ yields the claim.

The statement for $F'$ follows by an analogous argument.
\end{proof}

Gathering the previous  propositions, a standard localization argument yields the proof of Theorem \ref{thm:integral-rep}. For completeness, we provide the proof and refer  to \cite[Section 5.4]{AABPT23} for more details.

\begin{proof}[Proof of Theorem \ref{thm:integral-rep}]
By Proposition \ref{InnerReg} and the fact that $F_{\e,\gamma}(v,\cdot)$ are increasing set functions, it follows that $F'(v,\cdot)$ and $F''(v,\cdot)$ are inner regular and increasing set functions.
Hence, by \cite[Theorem 10.3]{BD98} (see also \cite{DM93,B89}), for every $\e_j\to0$ there exists a subsequence $(\e_{j_k})$ such that
$$
\Glim_{k\to\infty} F_{\e_{j_k}, \gamma_{j_k}}(v,A)=: F(v, A)
$$
exists for every $v\in L^p(\omega;\R^m)$ and $A\in \Ar(\omega)$, and is finite precisely on $W^{1,p}(A;\R^m)$ by Proposition \ref{liminf-limsupBound}.

For $v\in W^{1,p}(A;\R^m)$, let  $\tilde F(v,\cdot)$ denote the inner regular extension of $F(v,\cdot)$ on $\A(A)$.
Then $\tilde F$ satisfies the assumptions of \cite[Theorem 9.1]{BD98}.
Indeed, locality and translation invariance (cf.\ \cite[assumptions (i) and (iv) in Theorem 9.1]{BD98}) are immediate.
The growth condition (assumptions (iii)) follows from Proposition \ref{liminf-limsupBound}.
The measure property (assumptions (ii)) is a consequence of Propositions \ref{subadditivity} and \ref{InnerReg}, together with De Giorgi-Letta's criterion (see \cite[Theorem 1.53]{AFP00}).
Finally, lower-semicontinuity (assumptions (v)) follows from the definition of $\tilde F$, as  supremum of lower-semicontinuous functions, and from the fact that $F$ is a $\Gamma$-limit with respect to the strong $L^p$-topology.

This yields the integral representation.
The quasiconvexity of $f_0$ is a consequence of the lower-semicontinuity of $F$ and the growth conditions (see e.g.\ \cite{BD98,B89}).
\end{proof}

\subsection{Convergence of minimum problems}
A $\Gamma$-convergence result is naturally complemented by the convergence of the associated minimum problems. In the nonlocal setting, Dirichlet boundary conditions are imposed on a neighborhood of the lateral boundary of the cylinder $\Omega^\gamma$ (respectively, $\omega\x I$ in the rescaled setting), see Figure \ref{fig:bc}. This result will also play a key role in the homogenization case.

\begin{definition}
Let $g\in L^p_{\loc}(\R^{d-1};\R^m)$, let $A\subset\R^{d-1}$ be an open, bounded, set with Lipschitz boundary, and let $s>0$. We define
\begin{equation}\label{eq:DBC}
\mathcal{D}^{s}_g(A) := \{u\in L^p(\R^d; \R^m) : u(x)=g(x_\alpha),\ x\in \R^d,\ \dist(x_\alpha,\R^{d-1}\setminus A)<s\}\,.
\end{equation}
Moreover, for any $r>0$, we define the restricted functionals
 \begin{equation}\label{eq:en-DBC}
F_{\e, \gamma}^{r,g}(v,A):=\begin{cases}
F_{\e, \gamma}(v,A)\,, & v\in\mathcal{D}^{\e r}_g(A), \\
+\infty & \text{otherwise,}
\end{cases}
\quad
\F_{\e, \gamma}^{r,g}(u,A):=\begin{cases}
\F_{\e, \gamma}(u,A)\,, & u\in\mathcal{D}^{\e r}_g(A), \\
+\infty & \text{otherwise.}
\end{cases}
\end{equation}
When dealing with the affine function $g(x_\alpha)=Mx_\alpha$ for some $M\in\mathbb{R}^{m\times (d-1)}$, we use the shorthand notation  $\mathcal{D}^s_M$, $F_{\e,\gamma}^{r,M}$, and $\F_{\e,\gamma}^{r,M}$.
\end{definition}

\begin{figure}[tbh]
\begin{center}
\includegraphics{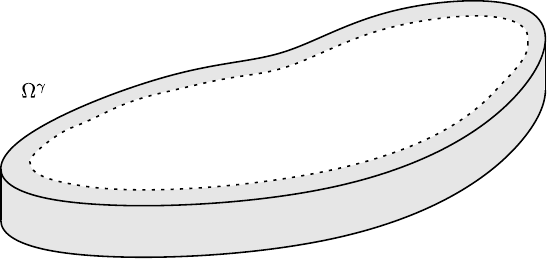}
\caption{Neighborhood of the lateral boundary of the cylinder $\Omega^\gamma$ where boundary conditions for functions in $D^s_g(\omega)$ are imposed}
\label{fig:bc}
\end{center}
\end{figure}

We can now state and prove a $\Gamma$-convergence result under Dirichlet boundary conditions, followed by the convergence of the associated minimum problems.
The proof strategy follows that of \cite[Propositions 5.3 and 5.4]{AABPT23}.

\begin{proposition}\label{DBC-Gamma-conv}
Let $A\in\Ar(\omega)$. Let $f_\e$ satisfy \eqref{eq:GC}, and assume that \eqref{eq:h0}--\eqref{eq:h4} hold.
Let $\e_j$, $\gamma_j$, $\{j_k\}$, $f_0$ and $F$ be as in Theorem \ref{thm:integral-rep}.
Let $g\in W_{loc}^{1,p}(\R^{d-1};\R^m)$ and $r>0$, and let $F_{\e,\gamma}^{r,g}$ be defined as in \eqref{eq:en-DBC}.
Then $F_{\e_{j_k},\gamma_{j_k}}^{r,g}$ $\Gamma$-converges to the functional
$$
F^g(v,A):=\begin{cases}
F(v,A)\,, & v-g\in W^{1,p}_{0}(A; \R^m), \\
+\infty & \text{otherwise.}
\end{cases}
$$
as $k\to\infty$, with respect to the convergence in Definition \ref{def:conv}.
\end{proposition}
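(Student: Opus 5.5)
We work with the rescaled functionals $\F^{r,g}_{\e,\gamma}$ and prove the liminf and limsup inequalities separately along the subsequence $\e_{j_k},\gamma_{j_k}$ (which we relabel $\e_j,\gamma_j$).

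\emph{Liminf inequality.} Let $u_j\to u$ in $L^p(\omega\x I;\R^m)$ with $\sup_j\F^{r,g}_{\e_j,\gamma_j}(u_j,A)<+\infty$. Then $u_j\in\mathcal{D}^{\e_j r}_g(A)$, and therefore $\F^{r,g}_{\e_j,\gamma_j}=\F_{\e_j,\gamma_j}$ along the sequence. Theorem \ref{thm:integral-rep} gives $\liminf_j\F_{\e_j,\gamma_j}(u_j,A)\ge F(v,A)$, where $u(x)=v(x_\alpha)$ and $v\in W^{1,p}(A;\R^m)$.

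It remains to show that $v-g\in W^{1,p}_0(A;\R^m)$. Fix a larger Lipschitz set $A''\Supset A$ and extend each $u_j$ by $g(x_\alpha)$ on $(A''\setminus A)\x I$. Since $u_j=g$ on an $\e_j r$-strip inside $A$, the difference quotients across $\partial A$ with range $|\xi_\alpha|<r$ involve only values equal to $g$. Hence
$$
\G^{r}_{\e_j,\gamma_j}(u_j,A'')\le C\big(\G^{r}_{\e_j,\gamma_j}(u_j,A)+\G^r_{\e_j,\gamma_j}(g,A'')\big),
$$
and the last term is bounded by Section \ref{sec:scaling}. Theorem \ref{kolcom} on $A''$ then gives that the limit belongs to $W^{1,p}(A'')$. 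This limit equals $g$ on $A''\setminus A$, because $u_j=g$ there for every $j$. Thus the extension of $v$ by $g$ is Sobolev on $A''$, and the Lipschitz regularity of $\partial A$ yields $v-g\in W^{1,p}_0(A)$.

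\emph{Limsup inequality.} By density and the continuity of $F(\cdot,A)$ in strong $W^{1,p}$ (which follows from the $p$-growth of $f_0$ in Theorem \ref{thm:integral-rep}), it suffices to treat $v$ with $v-g\in C^\infty_c(A)$. Since $F^g$ is a $\Gamma$-limsup and hence lower semicontinuous, the density argument then extends the bound to all $v$ with $v-g\in W^{1,p}_0$.

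Fix $A'\Subset A$ containing $\operatorname{supp}(v-g)$ and take a recovery sequence $u_j\to v$ for $\F_{\e_j,\gamma_j}(\cdot,A)$. Following the fundamental estimate (Proposition \ref{subadditivity}), work with truncated energies $\F^T$ and recover the full energies at the end via Lemma \ref{lem:trunc} together with Lemma \ref{lem:short-r-control}. Join $u_j$ on $A_i$ to $g$ outside $A_{i+1}$ through cut-offs $\varphi_i$ between nested sets $A'\subset A_1\subset\dots\subset A_N\Subset A$.

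The resulting $w_j$ equals $g$ in a fixed neighbourhood of $\partial A$. Since $\e_j r\to0$, for large $j$ this gives $w_j\in\mathcal{D}^{\e_jr}_g(A)$. Moreover, $w_j\to v$, because $u_j\to v$ and $v=g$ outside $A'$. The De Giorgi averaging argument produces an index $i_0$ with
$$
\F^T(w_j,A)\le(1+\tfrac{C}{N})\big(\F^T(u_j,A)+\F^T(g,A\setminus\overline{A'})\big)+\tfrac{C}{N}|A|+CN^p\|u_j-g\|^p_{L^p((A\setminus \overline{A'})\x I)}.
$$
The last term tends to $CN^p\|v-g\|^p_{L^p(A\setminus A')}=0$. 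The term involving $g$ is bounded by $C\int_{A\setminus A'}(|\nabla g|^p+1)$, which we make small by choosing $A'$ close to $A$. Letting $j\to\infty$, then $N\to\infty$, then $A'\uparrow A$ gives $\limsup_j\F^{r,g}(w_j,A)\le F(v,A)$.

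\emph{Main obstacle.} The delicate point is the cut-off estimate in the thin-film rescaling, in particular the transition-region term containing $\tfrac{N}{R}|\xi_\alpha|\,|u_j-g|$. Its control requires \eqref{eq:h1}/\eqref{eq:h3} uniformly in the ratio $\e/\gamma$, exactly as in \eqref{eq:fun-est}. Since the cut-offs depend only on $x_\alpha$, vertical interactions never see the transition, so I expect the argument of Proposition \ref{subadditivity} to carry over verbatim.
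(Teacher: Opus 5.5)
Your proposal is correct and follows essentially the same route as the paper. For the liminf, you extend $u_j$ by $g$ to a larger Lipschitz set and apply Theorem \ref{kolcom} there. For the limsup, you reduce by density to $\operatorname{spt}(v-g)\Subset A$ and glue a recovery sequence to $g=v$ near $\partial A$ via the cut-off and averaging argument of Propositions \ref{subadditivity} and \ref{InnerReg}.

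The one detail to fix: in the liminf you should work with $\G^{r'}_{\e,\gamma}$ for $r'=r\wedge r_0$ rather than $\G^{r}_{\e,\gamma}$, since \eqref{growth-cond} only bounds $\G^{r_0}_{\e,\gamma}$. The crossing argument across $\partial A$ still works for any $r'\le r$; the paper uses $r'\le\min\{r_0,r/2\}$.
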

\begin{proof}
Since $\F_{\e,\gamma}^{r,g}(u, A)\ge \F_{\e,\gamma}(u,A)$, to prove the $\Gamma$-$\liminf$ inequality it suffices to show that if $u_j\to u$ in $L^{p}(A\x I;\R^m)$ and $\sup_j \F_{\e_j,\gamma_j}^{r,g}(u_j, A)<+\infty$, then $v-g\in W_{0}^{1,p}(A;\mathbb{R}^m)$, where $v(x_\alpha)\equiv u(x)$.
Let $B\Supset A$ be an open set. By the boundary conditions, we have $u_j\to u$ in $L^p(B\x I;\R^m)$.
By \eqref{growth-cond} and \eqref{eq:h0}, for every $r'\leq \min\{r_0,r/2\}$ we have
\begin{align*}
\G_{\e_j, \gamma_j}^{r'}(u_j,B) &\leq \G_{\e_j, \gamma_j}^{r'}(u_j,A_{\e_j}^{\frac{r}{2}})+\G_{\e_j, \gamma_j}^{r'}(g, (B\setminus \overline{A_{\e_j}^r}))\\
&\leq C(\F_{\e_j, \gamma_j}(u_j,A)+|A|)+\G_{\e_j, \gamma_j}^{r'}(g, (B\setminus \overline{A_{\e_j}^r}))\leq C',
\end{align*}
where $A_\e^r :=\{x_\alpha\in A:\ \dist(x_\alpha,\R^{d-1}\backslash A)> \e r\}$, and $C'>C>0$ are some constants depending on $A$, $r$, $d$, $p$ and $g$.
By Theorem \ref{kolcom}, $u\in W^{1,p}(B\times I;\R^m)$ is constant in $x_d$, hence $v-g\in W_{0}^{1,p}(A;\mathbb{R}^m)$.
This yields the  $\Gamma$-$\liminf$ inequality.

To prove the  $\Gamma$-$\limsup$ inequality, by density argument it suffices to consider $v\in W^{1,p}(A;\R^m)$ such that  $\text{spt}(v-g)\Subset A$.
Let $u_j\to u$ in $L^p(A\x I;\mathbb{R}^m)$, with $u(x)\equiv v(x_\alpha)$ and
$$
\lim_{j\to\infty}\F_{\e_j,\gamma_j}(u_j,A)=F(u,A).
$$
Reasoning as in Propositions \ref{subadditivity} and \ref{InnerReg}, for any $\eta>0$, we can find a suitable cut-off function $\varphi_j$ with
$\text{spt}( \varphi_j)\Subset A$ such that, setting
$v_j:=\varphi_j(x_\alpha)u_j(x)+(1-\varphi_j(x_\alpha))u(x)$,
we have that  $v_j\to u$ in $L^p(A\x I;\mathbb{R}^m)$ and
$$
\F_{\e_j, \gamma_{j}}(v_j,A) \le \F_{\e_j,\gamma_j}(u_j,A) + \eta.
$$
For $j$ large enough, $v_j\in \mathcal{D}^{\e_j r}_g(A)$ , hence
$$
\limsup_{j\to\infty}\F_{\e_j,\gamma_j}^{r,g}(v_j,A)\le F(v,A)+\eta\,.
$$
The arbitrariness of $\eta$ concludes the proof.
\end{proof}

\begin{proposition}\label{DBCminpbs}
Under the assumptions of Proposition {\rm\ref{DBC-Gamma-conv}}, we have 
$$
\lim_{k\to\infty}\inf\{F_{\e_{j_k},\gamma_{j_k}}(v,A) :\,u\in \mathcal{D}^{\e_{j_k} r}_g(A)\}=\min\{F(v,A) :\,v-g\in W^{1,p}_{0}(A\x I;\mathbb{R}^m)\}.
$$
Moreover, if $v_j\in\mathcal{D}^{\e_j r}_g(A)$ is a converging sequence such that
$$
\lim_{j\to \infty} F_{\e_j,\gamma_j}(v_j,A)=\lim_{j\to \infty} \inf\{F_{\e_j,\gamma_j}(v,A):\,v\in \mathcal{D}^{\e_j r,g}(A)\} ,
$$
then its limit is a minimizer of $\min\{F(v,A):\,v-g\in W^{1,p}_{0}(A\x I;\mathbb{R}^m)\}$.
\end{proposition}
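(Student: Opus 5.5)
The proof follows the standard fundamental theorem of $\Gamma$-convergence, applied to the boundary-value functionals $F_{\e,\gamma}^{r,g}(\cdot,A)$. Proposition \ref{DBC-Gamma-conv} supplies the $\Gamma$-convergence to $F^g(\cdot,A)$, so two further ingredients are needed: equi-coercivity of $F_{\e_{j_k},\gamma_{j_k}}^{r,g}(\cdot,A)$ with respect to the convergence in Definition \ref{def:conv}, and existence of a minimizer for the limit problem. We work throughout with the rescaled functionals $\F^{r,g}_{\e,\gamma}$ and strong $L^p(A\x I)$ convergence, as allowed by the remark in Section \ref{sec:rescaled}.

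\emph{Step 1: equi-coercivity.} Take $u_k\in\mathcal{D}^{\e_{j_k}r}_g(A)$ with $\sup_k\F_{\e_{j_k},\gamma_{j_k}}(u_k,A)<+\infty$. Fix $B\Supset A$ and $r'\le\min\{r_0,r/2\}$. As in the $\liminf$ part of the proof of Proposition \ref{DBC-Gamma-conv}, \eqref{growth-cond} together with the boundary condition bounds $\G^{r'}_{\e_{j_k},\gamma_{j_k}}(u_k,B)$ uniformly. The interactions near $\partial A$ only involve $g$, which is a $W^{1,p}_{\loc}$ function independent of $x_d$, so they are controlled by the computation of Section \ref{sec:scaling}.

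For the $L^p$ bound, apply the Poincar\'e inequality of Lemma \ref{lem:poinc} on $B$ to $u_k-\tilde g$, where $\tilde g$ is a fixed $W^{1,p}$ function on $B$ equal to $g$ near $\partial B$. Since $u_k-\tilde g$ vanishes in an $\e r$-neighbourhood of $\partial B$ (adjusting $B$ to lie within that neighbourhood of $A$), and since $\G^{r'}$ of a difference is bounded by $2^{p-1}$ times the sum, this gives $\sup_k\|u_k\|_{L^p(B\x I)}<\infty$.

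Theorem \ref{kolcom} then yields a subsequence converging strongly in $L^p$ to some $u(x)=v(x_\alpha)$ with $v\in W^{1,p}$. The $\liminf$ argument of Proposition \ref{DBC-Gamma-conv} shows $v-g\in W^{1,p}_0(A)$.

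\emph{Step 2: existence of a limit minimizer.} By Theorem \ref{thm:integral-rep}, $F(\cdot,A)$ is an integral functional with a quasiconvex Carath\'eodory density of $p$-growth. It is therefore weakly lower semicontinuous on $W^{1,p}$, and by Poincar\'e's inequality it is coercive on $g+W^{1,p}_0(A)$. The direct method gives a minimizer.

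\emph{Step 3: conclusion.} The infima are uniformly bounded above: one recovery sequence for $g$ (from the $\limsup$ part of Proposition \ref{DBC-Gamma-conv}) suffices. Take almost-minimizers $v_k$ with $F(v_k,A)\le\inf+1/k$. By Step 1 they are precompact, and every cluster point $\bar v$ is admissible. The $\Gamma$-$\liminf$ inequality gives $F(\bar v,A)\le\liminf_k\inf$. For any admissible $w$, a recovery sequence gives $\limsup_k\inf\le F(w,A)$. Hence the limit of the infima exists along the whole subsequence, equals the minimum, and every cluster point is a minimizer. The same chain applied to a converging sequence of asymptotic minimizers proves the second claim.

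The main obstacle is the uniform $L^p$ bound in Step 1. Lemma \ref{lem:poinc} requires zero boundary data, so the boundary datum $g$ has to be subtracted, and one must check that $\G^{r'}(\tilde g,\cdot)$ remains bounded uniformly in all regimes of $\e/\gamma$. This is exactly where \eqref{eq:h3} enters, through the bound \eqref{eq:scal-ub}.
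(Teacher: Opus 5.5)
Your proof is correct and follows essentially the paper's route: the fundamental theorem of $\Gamma$-convergence, with equi-coercivity obtained by applying Lemma \ref{lem:poinc} to $u_k-g$ and then Theorem \ref{kolcom}. Your detour through $B\Supset A$ and $\tilde g$ is unnecessary. The boundary condition already makes $u_k-g$ vanish on the inner $\e r$-strip of $A$, so the lemma applies on $A$ directly, and the energy bound on $A$ follows straight from \eqref{growth-cond}. Likewise, the bound on $\G^{r'}(g,\cdot)$ needs only that $\chi_{C_{r'}}$ trivially satisfies the analogue of \eqref{eq:h3}, not \eqref{eq:h3} for $\psi_\e$.
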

\begin{proof}
By the Fundamental Theorem of $\Gamma$-convergence (see, e.g., \cite[Theorem 1.21]{B02}), it suffices to prove the equi-coerciveness of the sequence $\{\F_{\e_j,\gamma_j}^{r,g}(\cdot ,A)\}_{\e_j,\gamma_j}$ in the strong $L^p(A\x I;\R^m)$ topology. Let $\{u_j\}_j\subset L^p(\omega\x I;\mathbb{R}^d)$ be such that $F_{\e_j,\gamma_j}^{r,g}(u_j,A)\le C$.
Reasoning as in the proof of Proposition \ref{DBC-Gamma-conv}, from \eqref{eq:GC} and \eqref{eq:h0} we deduce that
$\G_{\e_j,\gamma_j}^{r'}( u_j,  A) \leq C$
for every $r'\leq \min\{r_0,r/2\}$.
Proposition \ref{lem:poinc} yields that
$$
\int_{\omega\x I}|u_j(x)-g(x_\alpha)|^p dx \le C \G_{\e_j,\gamma_j}^{r'}(u_j-g, A) \le C \Big(\G_{\e_j,\gamma_j}^{r'}(u_j,A)+\|\nabla g\|_{L^p(A\x I)}^p\Big)\leq C',
$$
for constants $C'>C>0$ independent of the sequence.
Hence, by Theorem \ref{kolcom}, the sequence $\{u_j\}_j$ is relatively compact with respect to the strong $L^p$-topology.
\end{proof}

\section{Periodic Homogenization: interaction of scales}\label{sec:hom_interaction}
We now turn to the case of periodic homogenization.
As usual in variational homogenization, the presence of structural assumptions, as spatial periodicity, allows us to strengthen the convergence result of Theorem \ref{thm:integral-rep}, showing that the whole family $F_{\e,\gamma}$ $\Gamma$-converges to a \emph{homogeneous} limit functional, namely, one whose density is independent of the spatial variable.

Moreover, the homogenized density admits an explicit formula, which depends on the mutual vanishing behavior of the modeling parameters $\e$ and $\gamma$. In particular, we distibguish three different regimes 
\begin{equation}\label{regimes}
\lim_{\e\to0} \frac{\e}{\gamma}=0,
\quad
\lim_{\e\to0} \frac{\e}{\gamma}=\delta\in(0,+\infty),
\quad
\lim_{\e\to0} \frac{\e}{\gamma}=+\infty\,.
\end{equation}
Each of these regimes leads to different effective behaviours.

In this section we focus on the cases $\e=\delta \gamma$, namely when the size of the nonlocality $\e$ and the thickness $\gamma$ are of the same order, with fixed proportionality constant $\delta$. We  also cover the more general case of non-constant ratios $\delta(\e):=\e/\gamma\to \delta\in(0,+\infty)$. The regimes $\delta(\e)\to \delta\in\{0,+\infty\}$ as $\e\to0$ will be addressed in Section \ref{sec:hom_separation}.

We consider energy densities of the form
\begin{equation}\label{eq:homdens}
f_\e(x,\xi, z):= f\Bigl(\frac{x_\alpha}{\e}, \frac{x_d}{\gamma},\xi, z\Bigr)
\end{equation}
where $f: \R^d\times \R^{d}\times \R^m\mapsto [0, +\infty)$ is a Borel function, $(0,1)^{d-1}$-periodic in $x_\alpha$, and satisfies the growth conditions
\begin{equation}\label{eq:homgc}
c_1(\psi(\xi)|z|^p-\rho(\xi))\le f(y,\xi,z) \le c_2(\psi(\xi)|z|^p+\rho(\xi))
\end{equation}
for some $c_2>c_1>0$, where $\psi$ and $\rho$ satisfy
\begin{equation}\label{eq:homh0}
\psi(\xi)\ge c_0, \quad \xi\in C_{r_0}
\end{equation}
for some $r_0$, $c_0>0$,
\begin{equation}\label{eq:homh12}
\int_{\R^d}(\psi(\xi)|\xi|^p+\rho(\xi))\d\xi<+\infty,
\end{equation}
and
\begin{equation}\label{eq:homh3}
\sup_{|\xi_d|<2}\int_{\R^{d-1}}(\psi(\xi_\alpha,\xi_d)|(\xi_\alpha,\xi_d)|^p+\rho(\xi_\alpha,\xi_d))\d\xi_\alpha<+\infty,
\end{equation}
and for every $\eta>0$, there exists $r_\eta>0$ such that
\begin{equation}\label{eq:homh4}
\sup_{|\xi_d|<2}\int_{\R^{d-1}\setminus B_{r_\eta}}(\psi(\xi_\alpha,\xi_d)|(\xi_\alpha,\xi_d)|^p+\rho(\xi_\alpha,\xi_d))\d\xi_\alpha<\eta.
\end{equation}
Under these assumptions, the densities \eqref{eq:homdens} satisfy \eqref{eq:h0}--\eqref{eq:h4}.

The asymptotic formula \eqref{eq:asyhom}  below can be interpreted as a nonlocal counterpart of  \cite[Theorem 4.2]{BFF00}, or as an anisotropic variant of  \cite[Theorem 6.1]{AABPT23}.

\begin{theorem}[The asymptotic formula]\label{AsyHom}
Let $F_{\e,\gamma}: L^p(\Omega^\gamma;\R^m)\to[0,+\infty]$ be defined by
\begin{equation}\label{eq:functionals_Hom}
F_{\e,\gamma}(v):=\Big(\frac{\e}{\gamma^2}\vee\frac{1}{\gamma}\Big)\int_{\R^d}\int_{\Omega^\gamma_{\e}(\xi)} f\Big(\frac{x_\alpha}{\e}, \frac{x_d}{\gamma},\xi, \frac{v(x+\e\xi)-u(x)}{\e}\Big)\d x\d\xi,
\end{equation}
where $f$ satisfies \eqref{eq:homgc}--\eqref{eq:homh4}, and assume that $\e/\gamma\equiv\delta\in(0,+\infty)$.
Then, for every $M\in\R^{m\times(d-1)}$, the limit
\begin{equation}\label{eq:asyhom}
\begin{aligned}
f_{\rm hom}^{\delta}(M)&=\lim_{R\to+\infty}\frac{1}{R^{d-1}}\inf_{v\in\mathcal{D}_M^1(Q_R)}(\delta\vee1)\int_{\R^d}\int_{(Q_R\x I)_{1,\delta}(\xi)}f(x,\xi,v(x+\xia+\delta\xid)-v(x))\d x\d\xi
\end{aligned}
\end{equation}
exists and defines a quasiconvex function satisfying the growth conditions
$$c(|M|^p-1) \le f_{\rm hom}^\delta(M) \le C(|M|^p+1)
$$ 
for some constants $C>c>0$. Moreover, 
$$
\Glim_{\e\to0} F_{\e,\gamma}(v)=\int_\omega f_{\rm hom}^\delta(\nabla_\alpha v(x_\alpha))\d x_\alpha,
$$
for every $v\in W^{1,p}(\omega;\R^m)$, with respect to the convergence in Definition \ref{def:conv}.
\end{theorem}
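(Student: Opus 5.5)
The plan follows the standard route for periodic homogenization via localization, as in \cite[Theorem 6.1]{AABPT23}. The idea is to combine Theorem \ref{thm:integral-rep} with the convergence of Dirichlet minimum problems (Propositions \ref{DBC-Gamma-conv} and \ref{DBCminpbs}). We first fix an arbitrary subsequence $\e_j\to0$. Theorem \ref{thm:integral-rep} then gives a further subsequence and a Carath\'eodory quasiconvex density $f_0$ with $p$-growth representing the $\Gamma$-limit on every $A\in\Ar(\omega)$. The goal is to show that $f_0(x_\alpha,M)$ is independent of $x_\alpha$ and equals the limit in \eqref{eq:asyhom}. Because the result is independent of the subsequence, the Urysohn property then yields convergence of the whole family. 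Existence of the limit in $R$ will come out of the same argument, or alternatively from a direct subadditivity argument.

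\emph{Step 1 (identification via cell problems).} Fix $M$, a point $x_\alpha\in\omega$, and a small cube $Q_\rho(x_\alpha)\subset\omega$. By Proposition \ref{DBCminpbs} applied with $g(y_\alpha)=My_\alpha$ and $r=1$,
$$\int_{Q_\rho(x_\alpha)}f_0(y_\alpha,M)\,\d y_\alpha\le\min\{F(v,Q_\rho):v-M\cdot\in W^{1,p}_0\}=\lim_j\inf\{\F_{\e_j,\gamma_j}(u,Q_\rho):u\in\mathcal D^{\e_j}_M(Q_\rho)\}.$$
We then work in the rescaled variables. The vertical variable is already on $I$ with the profile $f(\cdot,x_d,\cdot,\cdot)$, because $x_d/\gamma$ becomes $x_d$. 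In the planar variables we set $y_\alpha=x_\alpha/\e$ and $w(y)=u(\e y_\alpha,y_d)/\e$. Since $\e/\gamma=\delta$, the increment $\e\xia+\frac{\e}{\gamma}\xid$ becomes $\xia+\delta\xid$ in $y$, and the prefactor is exactly $(\delta\vee1)$. The Dirichlet class $\mathcal D^{\e}_M(Q_\rho)$ maps to $\mathcal D^1_M(Q_{\rho/\e})$ up to an additive constant, and the energy scales by $\e^{d-1}$. Hence the infimum equals $\rho^{d-1}$ times the quantity inside \eqref{eq:asyhom} with $R=\rho/\e_j$, centred at $x_\alpha/\e_j$. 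Periodicity in $x_\alpha$ lets us translate the cube to an integer position, at the cost of a shift of at most one period; this shift is absorbed by the comparison argument of Step 2. It follows that $\lim_j$ of the normalized cell infimum at $R=\rho/\e_j$ exists along the subsequence and equals $\rho^{1-d}\int_{Q_\rho}f_0(\cdot,M)$. Step 2 shows this value does not depend on the sequence $R\to\infty$. As a consequence the limit in \eqref{eq:asyhom} exists, and by Lebesgue differentiation $f_0(x_\alpha,M)=f^\delta_{\rm hom}(M)$ for a.e. $x_\alpha$. Quasiconvexity and the growth bounds are then inherited from Theorem \ref{thm:integral-rep}.

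\emph{Step 2 (main obstacle: existence of the limit in $R$).} Let $m_R:=R^{1-d}\inf\{\dots\}$ denote the normalized cell infimum. We prove $\limsup_{R}m_R\le\liminf_{R}m_{R}$ by an almost-subadditivity argument. Take $S\gg R$ and a near-minimizer $v_R$ on $Q_R$. Cover $Q_S$ by integer translates of $Q_R$, and define $v$ on $Q_S$ by placing on each translate the shifted function $v_R(\cdot-k)+Mk$; this is compatible with the energy thanks to periodicity. Set $v=M y_\alpha$ elsewhere.

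Because the boundary datum is imposed only in a layer of width $1$, interactions of range $|\xi_\alpha|\le T$ across adjacent cubes see the affine function on both sides. After truncating to $B_T\x\R$ via Lemma \ref{lem:trunc}, we use boundary layers of width $T$ rather than $1$, then glue with cut-offs and De Giorgi averaging exactly as in Proposition \ref{subadditivity}. The resulting error is $O(T/R)$ plus the tail $\eta(T)$, which is controlled by \eqref{eq:homh4}. The vertical direction creates no difficulty, since $I$ is fixed and the vertical reach $\delta\xi_d$ is bounded by the domain.

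The delicate point is mismatching the width-$1$ boundary layer with the truncation range $T$. We handle it as in \cite[Proposition 5.3]{AABPT23}: define the cell problem with an enlarged layer, and show using the Poincar\'e inequality of Lemma \ref{lem:poinc} and a cut-off that the infimum changes by $o(R^{d-1})$. With this in hand, the $\Gamma$-convergence statement follows for the full family.
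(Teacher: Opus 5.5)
Your architecture is essentially the paper's. You use integral representation along subsequences (Theorem \ref{thm:integral-rep}), identification through the Dirichlet problems of Proposition \ref{DBCminpbs} and the blow-up $w(y)=\e^{-1}u(\e y_\alpha,y_d)$, and existence of the limit by tiling $Q_S$ with translates of near-minimizers after thickening the boundary layer to width $T$. The only organizational difference is harmless: the paper applies the representation theorem to the truncated functionals and removes the truncation at the end (its Step 4), while you truncate inside the subadditivity argument.

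However, your Step 1 has a genuine gap. The displayed inequality $\int_{Q_\rho}f_0(y_\alpha,M)\,\d y_\alpha\le\min\{F(v,Q_\rho):v-M\cdot\in W^{1,p}_0\}$ goes the wrong way. Testing with $v=M\cdot$ only gives $\min\le\int_{Q_\rho}f_0(y_\alpha,M)\,\d y_\alpha$. The reverse, Jensen-type inequality supplied by quasiconvexity holds only if $f_0$ does not depend on $x_\alpha$. For $x_\alpha$-dependent quasiconvex integrands it fails in general: already for $f_0(y,M)=a(y)|M|^p$ in one dimension, the Dirichlet minimum is a harmonic-type mean of $a$, strictly below the arithmetic mean when $a$ is non-constant. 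So your claim that the cell limit equals $\rho^{1-d}\int_{Q_\rho}f_0(\cdot,M)$ is unjustified. Combined with Step 2, Lebesgue differentiation then yields only $f^\delta_{\rm hom}(M)\le f_0(x_\alpha,M)$ for a.e.\ $x_\alpha$; the inequality $f_0\le f^\delta_{\rm hom}$ is missing.

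The missing ingredient is a proof that $f_0$ is independent of $x_\alpha$. This is where planar periodicity enters, and the paper invokes it in Step 3 via \cite[Proposition 6.1]{AABPT23}:
\begin{itemize}
\item Given $z\in\R^{d-1}$, pick $k_j\in\Z^{d-1}$ with $\e_jk_j\to z$.
\item By $1$-periodicity of $f$ in the planar variable, translating a recovery sequence by $\e_jk_j$ leaves the energy on the translated set unchanged.
\item With inner regularity (Proposition \ref{InnerReg}), this gives $F(v(\cdot-z),A+z)=F(v,A)$, hence $f_0=f_0(M)$.
\end{itemize}
Only then does quasiconvexity give exactly $\min\{F(v,Q_\rho):v-M\cdot\in W^{1,p}_0(Q_\rho;\R^m)\}=\rho^{d-1}f_0(M)$, and the rest of your argument goes through.

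Two alternatives would also close the gap:
\begin{itemize}
\item Invoke the blow-up characterization $f_0(x_\alpha,M)=\lim_{\rho\to0}\rho^{1-d}m(M\cdot,Q_\rho(x_\alpha))$ for a.e.\ $x_\alpha$, where $m$ is the Dirichlet minimum. This holds for Carath\'eodory quasiconvex integrands with $p$-growth, but it is a nontrivial theorem, not Lebesgue differentiation.
\item Build a recovery sequence for $M\cdot$ directly by $\e$-rescaled periodic tiling of cell near-minimizers. This gives $\int_A f_0(y_\alpha,M)\,\d y_\alpha\le|A|f^\delta_{\rm hom}(M)$, which is the missing inequality.
\end{itemize}
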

\begin{proof}
We divide the proof into four steps. In the first three steps we prove the claim for the truncated functionals \eqref{eq:trunc}, and in the final step we recover the general result.

\smallskip

\emph{Step 1: Thickening of the boundary layer}.
Let $T>1$. For $\e=1$, the rescaled truncated functional reads
$$
\F^T_{1,\frac{1}{\delta}}(u,Q_R)=(\delta\vee1)\int_{B_T\x\R}\int_{(Q_R\x I)_{1,\delta}(\xi)}f(x,\xi,u(x+\xia+\delta\xid)-u(x))\d x\d\xi\,.
$$
For any $R>0$, define
$$
H^T_R(M):=\frac{1}{R^{d-1}}\inf_{u\in\mathcal{D}_M^1(Q_R)}\F^T_{1,\frac{1}{\delta}}(u,Q_R).
$$
For technical reasons, it is convenient to enlarge the region where the boundary conditions are imposed, in an energy-efficient way. 
By a cut-off argument analogous to that used in the proof of Proposition \ref{subadditivity}, for any $u\in\mathcal{D}_M^1(Q_R)$ we can construct a function $w\in\mathcal{D}^{T}_M(Q_R)$ such that
\begin{equation}\label{eq:bc-large}
\F^T_{1,\frac{1}{\delta}}(w,Q_R)\le \Big(1+C\frac{T}{\sqrt{R}}\Big)\F^T_{1,\frac{1}{\delta}}(u,Q_R)+CR^{d-1}\frac{T}{\sqrt{R}}.
\end{equation}
Since we are interested in the limit as $R\to+\infty$, we may assume $R$ to be sufficiently large.

Indeed, define
$$
Q^{(i)}:=\{x_\alpha\in Q_R : \dist(x_\alpha,\partial Q_R)>\sqrt{R}-iT\}, \quad 0\le i\le \Big\lfloor\frac{\sqrt{R}}{T}\Big\rfloor-1,
$$ 
and let $\varphi^{(i)}$ be a corresponding cut-off function, equal to $1$ on $Q^{(i)}$ and vanishing outside $Q^{(i+1)}$.
By an averaging argument similar to that of the proof of Proposition \ref{subadditivity}, there exists $i_0$ such that, defining 
$$
w(x):= \varphi^{(i_0)}(x_\alpha)u(x)+(1-\varphi^{(i_0)}(x_\alpha)Mx_\alpha)
$$ 
we obtain
\begin{align*}
\F^T_{1,\frac{1}{\delta}}(w,Q_R) &\le \F^T_{1,\frac{1}{\delta}}(u,Q_R)+\F_{1,\frac{1}{\delta}}^T(Mx_\alpha,Q_R\setminus Q^{(0)}) \\
& \quad + C\frac{T}{\sqrt{R}}(\delta\vee1)\int_{B_T\x\R}\int_{(Q_R\setminus Q^{(0)})\x I_{\delta}(\xi_d)}\psi(\xi)|D_{1,\delta}^\xi u(x)|^p+\rho(\xi)\d x\d\xi \\
& \quad + C\frac{1}{T^{p-1}\sqrt{R}}\|u-Mx_\alpha\|_{L^p((Q_R\setminus Q^{(0)})\x I;\R^m)}^p.
\end{align*}
By the growth condition \ref{eq:homgc}, Lemma \ref{lem:poinc}, and assumptions \eqref{eq:h1} and \eqref{eq:h3}, this implies
\begin{align*}
\F^T_{1,\frac{1}{\delta}}(w,Q_R) &\le \F^T_{1,\frac{1}{\delta}}(u,Q_R)+\F_{1,\frac{1}{\delta}}^T(Mx_\alpha,Q_R\setminus Q^{(0)}) + C\frac{T}{\sqrt{R}} \big( \F_{1,\frac{1}{\delta}}(u,Q_R) + |Q_R\setminus Q^{(0)}| \big) \\
& \quad + C\frac{1}{\sqrt{R}}\big( \G_{1,\frac{1}{\delta}}^{r_0}(u,Q_R) + |M|^p|Q_R\setminus Q^{(0)}| \big)
\end{align*}
which yields \eqref{eq:bc-large},  
possibly after enlarging the constant $C$.
\smallskip

\emph{Step 2: Existence of the limit formula.}
Let $u_R\in\mathcal{D}_M^1(Q_R)$ satisfy
$$
\frac{1}{R^{d-1}}\F^T_{1,\frac{1}{\delta}}(u_R,Q_R)< H^T_R(M)+\frac{1}{R}.
$$
By Step 1, we can find $w_R\in\mathcal{D}_M^{T}(Q_R)$ such that
$$
\frac{1}{R^{d-1}}\F^T_{1,\frac{1}{\delta}}(w_R,Q_R)<H^T_R(M)+\frac{CT}{\sqrt{R}}\,,
$$
where we also used that $H^T_R(M)\le C(|M|^p +1)$ by testing with the affine function $Mx_\alpha$.
Using the periodicity of $f$, we construct a test function for $H_S^T$ with $S>2R$ and comparable energy.
Indeed, let  $N=\lfloor S/\lceil R\rceil\rfloor$ and define 
$$
\mathcal{I}_{R,S}:=\lceil R\rceil\{0,\dots,N-1\}^{d-1}\,.
$$
We define  $u_S\in\mathcal{D}_M^1(Q_S)$ by
$$
u_S(x):=
\begin{cases}
w_R(x-h)+Mh, & x\in (Q_R+h)\x I,\ h\in\mathcal{I}_{R,S} \\
Mx_\alpha & x\in \mathcal{S}_{R,S}\x I,
\end{cases}
$$
where
$$
\mathcal{S}_{R,S}:=\big(Q_S\setminus Q_{N\lceil R\rceil}\big) \cup \bigcup_{h\in\mathcal{I}_{R,S}} ((Q_{\lceil R\rceil}\setminus Q_R)+h).
$$
Observe that, $u_S(x)= Mx_\alpha$ in a $T$-neighborhood of $\mathcal{S}_{R,S}$, namely on $\mathcal{S}_{R,S}^T:=\mathcal{S}_{R,S}+B^{d-1}_T$ (see Figure \ref{fig:periodic}).
\begin{figure}[tbh]
\begin{center}
\includegraphics{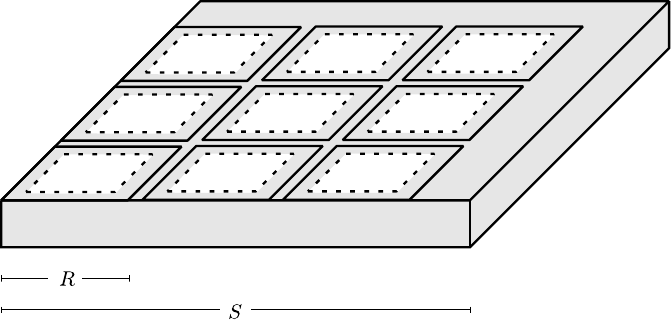}
\caption{Covering $Q_S\x I$ with planar, integer translations of $Q_R\x I$.
In the highlighted area, the function $u_S$ complies with the boundary conditions.}
\label{fig:periodic}
\end{center}
\end{figure}
We now estimate $\F^T_{1,\frac{1}{\delta}}(u_S,Q_S)$. Since the interactions are restricted to 
 $\xi_\alpha\in B_T$, nonzero interactions between points from different cubes lie in $\mathcal{S}_{R,S}^T$. Thus we can infer that
\begin{align*}
\F^T_{1,\frac{1}{\delta}}(u_S,Q_S) & \le \sum_{h\in\mathcal{I}_{R,S}} (\delta\vee1)\int_{B_T\x\R}\int_{(Q_R+h)_1(\xi_\alpha)\x I_{\delta}(\xi_d)}f(x,\xi,u_S(x+\xia+\delta\xid)-u_S(x))\d x\d\xi \\
& \qquad + (\delta\vee1)\int_{B_T\x\R}\int_{(\mathcal{S}_{R,S}^T)_1(\xi_\alpha)\x I_{\delta}(\xi_d)}f(x,\xi,M\xi_\alpha)\d x\d\xi.
\end{align*}
We estimate the two terms separately. For the first term we use the periodicity of $f$ in the planar variable; hence, a change of variables yields
\begin{multline*}
\sum_{h\in\mathcal{I}_{R,S}} (\delta\vee1)\int_{B_T\x\R}\int_{(Q_R+h)_1(\xi_\alpha)\x I_{\delta}(\xi_d)}f(x,\xi,u_S(x+\xia+\delta\xid)-u_S(x))\d x\d\xi \\
= N^{d-1} \F^T_{1,\frac{1}{\delta}}(w_R,Q_R) 
\end{multline*}
For the second term, by the growth condition \eqref{eq:homgc}, we obtain
$$
(\delta\vee1)\int_{B_T\x\R}\int_{(\mathcal{S}_{R,S}^T)_1(\xi_\alpha)\x I_{\delta}(\xi_d)}f(x,\xi,M\xi_\alpha)\d x\d\xi \le CT(|M|^p+1)(R^{d-2}N^{d-1}+RS^{d-2}).
$$
Combining the two estimates and using the definition of $N$,  we deduce
\begin{equation*}
H^T_S(M) \le \frac{1}{S^{d-1}}\F^T_{1,\frac{1}{\delta}}(u_S,Q_S\x I) \le H_R^T(M)+\frac{CT}{\sqrt{R}}+CT(|M|^p+1)\Big(\frac{1}{R}+\frac{R}{S}\Big).
\end{equation*}
Taking first the $\limsup$ as $S\to+\infty$ and then the $\liminf$ as $R\to+\infty$, we obtain the existence of the limit.

\smallskip

\emph{Step 3: Homogenization for truncated functionals.}
Applying Theorem \ref{thm:integral-rep} to $F_{\e,\gamma}^T$, for every $\e_j\to 0$ there exist subsequences $\e_{j_k},\gamma_{j_k}$ and a limit density $f_0^T$ which,
by planar periodicity, is independent of the spatial variable.
The proof of this is similar to that of \cite[Proposition 6.1]{AABPT23}; since the only difference is that here we work on $\omega$ and use cylindrical domains, we omit the details.

Let $x_0\in\omega$ and $r>0$ such that $Q_r(x_0)\subset\omega$.
Up to a translation, we can assume that $x_0=0$.
Since $f_0^T$ is quasiconvex and satisfies $p$-growth conditions, by the characterization of quasiconvex functions (see e.g.\ \cite[Section 6]{BD98}) and Proposition \ref{DBCminpbs}, we obtain
\begin{equation}\label{eq:qchom}
\begin{aligned}
f^{{T}}_0(M) &= \min\Big\{\frac{1}{r^{d-1}}\int_{Q_r} f^{{T}}_0(\nabla_\alpha v(x_\alpha))\d x_\alpha : v-Mx_\alpha\in W^{1,p}_0(Q_r;\R^m)\Big\} \\
&= \lim_{k\to+\infty} \inf \Big\{\frac{1}{r^{d-1}}\F^T_{\e_{j_k},\gamma_{j_k}}(u,Q_r) : u\in\mathcal{D}^{\e_{j_k}}_M(Q_r)\Big\}.
\end{aligned}
\end{equation}
To simplify the notation, we write $\e$ in place of $\e_{j_k}$. We perform the change of variables $y= (\e^{-1}x_\alpha, x_d)$ and define $w(y):=\e^{-1}u(\e y_\alpha, y_d)$. Then we have that
\begin{align*}
\F^T_{\e,\frac{\e}{\delta}}(u, Q_r) &= (\delta\vee1) \Big(\frac{\e}{r}\Big)^{d-1}\int_{B_T\x\R}\int_{(Q_{\frac{r}{\e}})_1(\xi_\alpha)\times I_\delta (\xi_d)} f(y, \xi, w(y+\xia+\delta\xid)-w(y))\d y \d\xi.
\end{align*}
Setting $R=\frac{r}{\e}$, this becomes
\begin{equation}\label{FTr}
\begin{split}
\F^T_{\e,\frac{\e}{\delta}}(u, Q_r) &= \frac{(\delta\vee1)}{R^{d-1}}\int_{B_T\x\R}\int_{(Q_R)_1(\xi_\alpha)\times I_{\delta}(\xi_d)} f(y, \xi, w(y+\xia+\delta\xid)-w(y))\d y \d\xi \\
& = \F_{1,\frac{\e}{\delta}}^T(w,Q_R).
\end{split}
\end{equation}
Observe that $w\in\mathcal{D}^1_M(Q_R)$. Substituting \eqref{FTr} into
 \eqref{eq:qchom}, we deduce that 
$$
f^{{T}}_0(M) = \lim_{k\to\infty} H^T_{R_k}(M).
$$
Since the limit exists by Step 2, the Urysohn property of $\Gamma$-convergence implies that the whole family $F_{\e,\gamma}^T$ $\Gamma$-converges, and $f^{{T}}_0=(f^T)_{\rm hom}^{\delta}$, that is, it coincides  with the function defined by \eqref{eq:asyhom} with $f^T$ in place of $f$, where 
$f^T(\cdot,\xi,\cdot):=f(\cdot,\xi,\cdot)\chi_{B_T}(\xi_\alpha)$.

\smallskip

\emph{Step 4: Conclusion.}
It remains to show 
$$
\lim_{T\to+\infty} (f^T)_{\rm hom}^{\delta}(M) = \lim_{R\to+\infty} H_R(M),
$$
since this will imply the claim as a consequence of Lemma \ref{lem:trunc}, where we have set
$$
H_R(M):=\frac{1}{R^{d-1}}\inf_{v\in\mathcal{D}_M^1(Q_R)}\F_{1,\frac{1}{\delta}}(v,Q_R).
$$
Note that the limit on the left-hand side above exists by monotonicity, whereas we still need to prove the existence of the limit on the right-hand side in the case $\F_{1,\frac{1}{\delta}}\neq \F_{1,\frac{1}{\delta}}^T$.
Since $\F_{1,\frac{1}{\delta}}^T\le \F_{1,\frac{1}{\delta}}$, it suffices to prove that
$$
\limsup_{R\to+\infty} H_R(M) \le \lim_{T\to+\infty}(f^T)_{\rm hom}^{\delta}(M).
$$
To this end, let $u_R\in\mathcal{D}^1_M(Q_R)$ be such that
$$
\frac{1}{R^{d-1}}\F^T_{1,\frac{1}{\delta}}(u_R,Q_R) < H^T_R(M)+\frac{1}{T}.
$$
By the growth conditions \eqref{eq:homgc}--\eqref{eq:homh4}, we have
$$
\frac{1}{R^{d-1}}\G_{1,\frac{1}{\delta}}^{r_0}(u_R,Q_R) \le C\Big(H^T_R(M)+\frac{1}{T}+1\Big) \le C(|M|^p+1).
$$
This estimate, together with Lemma \ref{lem:short-r-control}, yields
$$
\frac{1}{R^{d-1}}\int_{(Q_R)_1(\xi_\alpha)\x I_\delta(\xi_d)}|u_R(x+\xia+\delta\xid)-u_R(x)|^p\d x \le C(|\xi|^p+1)(|M|^p+1).
$$
Using $u_R$ as a test function for  $H_R(M)$, by \eqref{eq:h1} and \eqref{eq:h4} we obtain
\begin{align*}
H_R(M) &\le H_R^T(M)+\frac{1}{T}+\frac{(\delta\vee1)}{R^{d-1}}\int_{(\R^{d-1}\setminus B_T)\x\R}\int_{(Q_R\x I)_{1,\delta}(\xi)}f(x,\xi,D_{1,\delta}u_R(x))\d x\d\xi \\
&\le H_R^T(M)+\frac{1}{T} +  C(|M|^p+1)(\delta\vee1) \int_{(\R^{d-1}\setminus B_T)\x\frac{2}{\delta}I}\psi(\xi)(|\xi|^p+1)\d\xi \\
&\le H_R^T(M)+\frac{1}{T} + C(|M|^p+1)o_T(1),
\end{align*}
which concludes the proof.
\end{proof}

As usual, under a convexity assumption, the asymptotic formula \eqref{eq:asyhom} reduces to a single cell formula by superposition.

\begin{theorem}[The cell formula]\label{thm:cell}
Let $F_{\e,\gamma}$ and $f$ be as in Theorem {\rm\ref{AsyHom}}, and let $\e/\gamma\equiv\delta\in(0,+\infty)$.
Assume in addition that $f(x,\xi,\cdot)$ is convex for every $x,\xi\in\R^d$.
Then, the function $f_{\rm hom}^\delta$ defined in \eqref{eq:asyhom} reduces to  
\begin{equation}\label{eq:cell}
f_{\rm hom}^{\delta}(M)=\inf_{u\in\mathcal{D}_M^\#(Q_1)}(\delta\vee1)\int_{\R^d}\int_{Q_1\x I_{\delta}(\xi_d)}f(x,\xi,u(x+\xia+\delta\xid)-u(x))\d x\d\xi,
\end{equation}
for every $M\in\R^{m\times(d-1)}$, where $\mathcal{D}^{\#}_M(Q_1) := \{u\in L^p_{\rm loc}(\R^d; \R^m) : u-M x_\alpha \text{ is } Q_1 \text{-periodic}\, \}$.
\end{theorem}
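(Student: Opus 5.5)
We prove the two inequalities between $f_{\rm hom}^\delta(M)$ from \eqref{eq:asyhom} and the right-hand side of \eqref{eq:cell}, which we denote by $f_{\rm cell}^\delta(M)$. The structure mirrors the full-dimensional cell formula in \cite{AABPT23}. The only new feature is the vertical variable, which is confined to $I$ and is \emph{not} periodized, so it is carried along unchanged.

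\emph{Upper bound $f_{\rm hom}^\delta\le f_{\rm cell}^\delta$.} Fix $u\in\mathcal{D}^\#_M(Q_1)$ with finite cell energy, and first work with the truncated density $f^T$ (interactions with $|\xi_\alpha|<T$). For integer $R$, by $Q_1$-periodicity of $f$ and of $u-Mx_\alpha$ in the planar variable, the energy of $u$ on $(Q_R\x I)$ equals $R^{d-1}$ times the cell energy up to boundary terms. By Step~1 of the proof of Theorem~\ref{AsyHom}, the cut-off construction \eqref{eq:bc-large} modifies $u$ into a function of $\mathcal{D}^1_M(Q_R)$ at a relative cost $O(T/\sqrt R)$. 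Here the $L^p$-term $\|u-Mx_\alpha\|^p$ on the layer is $O(R^{d-2}\sqrt R)$ by periodicity, so no Poincaré inequality is needed. Dividing by $R^{d-1}$ and letting $R\to\infty$ gives $(f^T)^\delta_{\rm hom}(M)\le f_{\rm cell}^\delta(M)$. Letting $T\to\infty$ and using Step~4 of Theorem~\ref{AsyHom} then yields the claim.

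\emph{Lower bound $f_{\rm cell}^\delta\le f_{\rm hom}^\delta$ (convexity).} Take an almost-minimizer $v\in\mathcal{D}^1_M(Q_R)$ for $H_R(M)$ with $R$ an integer, and extend $v-Mx_\alpha$ by $0$ outside $Q_R$ (in the planar variable). Define the planar periodization by averaging over integer translates:
$$
u(x):=Mx_\alpha+\frac{1}{R^{d-1}}\sum_{h\in\{0,\dots,R-1\}^{d-1}}\tilde v(x_\alpha+h,x_d),
$$
where $\tilde v$ is the $Q_R$-periodic extension of $v-Mx_\alpha$. Then $u\in\mathcal{D}^\#_M(Q_1)$. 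For each fixed $\xi$, the increment $u(x+\xia+\delta\xid)-u(x)$ is an average over $h$ of increments of the $Q_R$-periodic function $Mx_\alpha+\tilde v$ at the points $x+h$. Since $f(x,\xi,\cdot)$ is convex and $f$ is $1$-periodic in $x_\alpha$, Jensen's inequality gives
$$
\int_{Q_1\x I_\delta(\xi_d)}f(x,\xi,D u)\,\d x\le\frac{1}{R^{d-1}}\int_{Q_R\x I_\delta(\xi_d)}f(x,\xi,D(Mx_\alpha+\tilde v))\,\d x.
$$
The right-hand side is the energy on $Q_R$ with periodic boundary conditions. It differs from $\F_{1,\frac1\delta}(v,Q_R)$ only through interactions crossing $\partial Q_R$. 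Because $v=Mx_\alpha$ near $\partial Q_R$ (distance at least $1$), crossing pairs with $|\xi_\alpha|$ small see affine data. The remaining pairs are controlled by Lemma~\ref{lem:short-r-control} together with the tail assumptions \eqref{eq:homh12}--\eqref{eq:homh4}.

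\emph{Main obstacle.} The main difficulty is exactly this boundary-crossing error in the lower bound. We plan to handle it by first replacing $v$, via \eqref{eq:bc-large}, with $w\in\mathcal{D}^T_M(Q_R)$, and by working with $f^T$. Then every interaction with $|\xi_\alpha|<T$ that crosses $\partial Q_R$ joins two points where $w$ is affine, so it produces the same contribution $f(x,\xi,M\xi_\alpha)$ in both energies. The vertical direction causes no difficulty: $I_\delta(\xi_d)$ is untouched by the planar periodization. This gives $(f^T)_{\rm cell}^\delta(M)\le H^T_R(M)+CT/\sqrt R$. Letting $R\to\infty$, and then $T\to\infty$, uses the monotone convergence $(f^T)_{\rm cell}^\delta\to f_{\rm cell}^\delta$. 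That convergence is justified as in Step~4 of Theorem~\ref{AsyHom}: the tail is bounded uniformly in $u$ through Lemma~\ref{lem:short-r-control} applied on the periodic cell, using \eqref{eq:homh4}.
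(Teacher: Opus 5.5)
Your proof is correct. The lower bound is essentially the paper's own argument. You average integer translates of the affinely-periodic extension, apply Jensen's inequality together with planar periodicity of $f$, and take a competitor in $\mathcal{D}^T_M(Q_R)$ for the truncated density $f^T$, so that every interaction crossing $\partial Q_R$ sees affine data. You then remove the truncation as in Step 4 of Theorem \ref{AsyHom}.

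One small correction: $\F^T_{1,\frac{1}{\delta}}(w,Q_R)$ contains no crossing interactions at all, so the crossing pairs do not give ``the same contribution in both energies''. The periodized energy exceeds $\F^T_{1,\frac{1}{\delta}}(w,Q_R)$ by $(\delta\vee1)\int_{B_T\x\R}\int_{(Q_R\setminus(Q_R)_1(\xi_\alpha))\x I_\delta(\xi_d)}f(x,\xi,M\xi_\alpha)\,\d x\,\d\xi\le C(|M|^p+1)TR^{d-2}$. This is exactly the term the paper estimates, and it is harmless after dividing by $R^{d-1}$.

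The genuine difference is in the upper bound. The paper builds no competitor for $H_R$. It takes the oscillating sequence $u_\e(x)=\e\,u(x_\alpha/\e,x_d)$ and notes that $u_\e\to Mx_\alpha$ in $L^p$. By periodicity, $\limsup\F_{\e,\gamma}(u_\e)$ is at most $|\omega|$ times the cell energy of $u$, and the $\Gamma$-liminf inequality of Theorem \ref{AsyHom} concludes, with no boundary modification and no truncation.

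You instead stay inside the asymptotic formula \eqref{eq:asyhom}. You restrict the periodic $u$ to $Q_R$ and impose the affine datum with the cut-off of Step 1. That step uses $u\in\mathcal{D}^1_M(Q_R)$ only through Lemma \ref{lem:poinc}, which you correctly replace by periodicity. You then let $T\to\infty$ via Step 4.

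Both routes are valid. Yours is a direct comparison of the cell and asymptotic formulas that does not depend on the $\Gamma$-convergence statement, at the cost of the cut-off and an extra truncation step. The paper's route is shorter because it reuses the $\Gamma$-liminf inequality already established.
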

\begin{proof}
We denote by $f_\delta$ the infimum problem on the right-hand side of \eqref{eq:cell}, namely
\begin{equation}\label{eq:hom-delta}
f_\delta(M):=\inf_{u\in\mathcal{D}_M^\#(Q_1)}(\delta\vee1)\int_{\R^d}\int_{Q_1\x I_{\delta}(\xi_d)}f(x,\xi,u(x+\xia+\delta\xid)-u(x))\d x\d\xi.
\end{equation}
We first prove that
$$
f_{\rm hom}^\delta(M) \le f_\delta(M).
$$
Let $\eta>0$ and let $u\in\mathcal{D}^{\#}_M(Q_1)$ be such that
$$
(\delta\vee1)\int_{\R^d}\int_{Q_1\x I_{\delta}(\xi_d)}f(x,\xi,u(x+\xia+\delta\xid)-u(x))\d x\d\xi < f_{\delta}(M)+\eta\,.
$$
Define $u_\e(x):=\e u(x_\alpha/\e,x_d)$.
By the growth conditions \eqref{eq:homgc}, Theorem \ref{kolcom}, and the planar periodicity of $u$, we have that $u_\e\to Mx_\alpha$ strongly in $L^p(\omega\x I)$ as $\e\to0$.
By the homogenization Theorem \ref{AsyHom} and the planar periodicity of $f$ we have that
$$
|\omega| f_{\rm hom}^\delta(M) \le \liminf_{\e\to0} \F_{\e,\gamma}(u_\e) \le |\omega|(f_{\delta}(M)+\eta)\,,
$$
which implies the desired inequality by arbitrariness of $\eta$.

We now prove the opposite inequality. For simplicity, we  restrict to the case $f=f^T$, where $f^T(\cdot,\xi,\cdot):=f(\cdot,\xi,\cdot)\chi_{B_T}(\xi_\alpha)$; the general case follows as in Step 4 of the proof of Theorem \ref{AsyHom}.
Since the limit in \eqref{eq:asyhom} exists, we restrict to integers values of $R$.
Let $R\in\N$ with $R>2T$. For any $u\in\mathcal{D}^T_M(Q_R)\subset\mathcal{D}^1_M(Q_R)$, we define $w\in\mathcal{D}^\#_M(Q_1)$ by superposition as
$$
w(x) := \frac{1}{R^{d-1}} \sum_{i\in Q_R\cap\N^{d-1}}u(x+\mathbf{i})
$$
where, with a slight abuse of notation, $u$ denotes its affine extension obtained as the sum of $Mx_\alpha$ and the periodic extension of $u-M x_\alpha$.
By convexity and periodicity of $f$ we obtain
\begin{align*}
f_{\delta}(M) &\le (\delta\vee1) \int_{B_T\x\R}\int_{Q_1\x I_{\delta}(\xi_d)} f(x,\xi,w(x+\xia+\delta\xid)-w(x))\d x\d\xi \\
&\le \frac{1}{R^{d-1}} \int_{B_T\x\R}\int_{Q_R\x I_{\delta}(\xi_d)} f(x,\xi,u(x+\xia+\delta\xid)-u(x))\d x\d\xi.
\end{align*}
Using the boundary condition, we estimate the energy contribution on $Q_R\setminus(Q_R)_1(\xi_\alpha)$ as follows
\begin{multline*}
\frac{1}{R^{d-1}} \int_{B_T\x\R}\int_{(Q_R\setminus(Q_R)_1(\xi_\alpha))\x I_{\delta}(\xi_d)} f(x,\xi,u(x+\xia+\delta\xid)-u(x))\d x\d\xi \\
\le \frac{1}{R^{d-1}} \int_{B_T\x\R}\int_{\{x_\alpha\in Q_R : \dist(x_\alpha,\partial Q_R)<T\}\x I_{\delta}(\xi_d)} f(x,\xi,M\xi_\alpha)\d x\d\xi \le C(|M|^p+1)\frac{T}{R}.
\end{multline*}
Passing to the limit as $R\to\infty$ yields the desired inequality.
\end{proof}

\subsection{Varying ratios}

We now consider the case of varying ratios\ie $\delta(\e):=\e/\gamma$, with $\delta(\e)\to\delta_0\in(0,+\infty)$.
We prove the stability of formula \eqref{eq:asyhom} in this regimes.
Such a representation also holds for the case $\delta_0=+\infty$.
The case $\delta_0=0$ requires additional assumptions on $f$ and will be addressed in the next section.

\begin{proposition} \label{prop:homextra}
Let $F_{\e,\gamma}$ and $f$ be as in Theorem {\rm\ref{AsyHom}}, and let $\delta(\e)=\e/\gamma$ with $f$ satisfying \eqref{eq:homgc}--\eqref{eq:homh4} and
$$
\lim_{\e\to0}\delta(\e)=\delta_0 \in (0,+\infty].
$$
Assume in addition that 
\begin{equation}\label{eq:homextra}
|f(x,\xi,z)-f(x,\xi+t\boldsymbol{e_d},z)|\le \alpha(t)|z|^p, \quad x,\xi\in\R^d,\, z\in\R^m,
\end{equation}
for some continuous function $\alpha:[0,+\infty)\to[0,+\infty)$.
Then, the limit result of Theorem \ref{AsyHom} holds with $f_{\rm hom}^{\delta_0}$ as in \eqref{eq:asyhom} if $\delta_0\in(0,+\infty)$, and with
\begin{equation}\label{eq:infhom}
f_{\rm hom}^{\infty}(M) = \lim_{R\to+\infty}\frac{1}{R^{d-1}} \inf_{u\in\mathcal{D}^1_M(Q_R)}\int_{\R^d}\int_{(Q_R\x I)_1(\xi)}f(x,\xia,u(x+\xi)-u(x))\d x\d\xi.
\end{equation}

Moreover, if $f(x,\xi,\cdot)$ is convex for every $x,\xi\in\R^d$, then $f_{\rm hom}^{\delta_0}$ is given by \eqref{eq:cell} for $\delta_0\in(0,+\infty)$ and
\begin{equation}\label{eq:infhom-cell}
f_{\rm hom}^{\infty}(M) =\inf_{u\in\mathcal{D}^{\#}_M(Q_1)}\int_{\R^d}\int_{Q_1\x I_1(\xi_d)}f(x,\xia,u(x+\xi)-u(x))\d x\d\xi.
\end{equation}
\end{proposition}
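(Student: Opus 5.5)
The strategy is to reduce the varying-ratio case to the fixed-ratio one by a perturbation argument on the vertical interaction variable, using \eqref{eq:homextra}. Throughout, work with the truncated rescaled functionals $\F^T_{\e,\gamma}$ and remove the truncation at the end via Lemma \ref{lem:trunc}.

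\emph{Case $\delta_0\in(0,+\infty)$.} By Theorem \ref{thm:integral-rep}, along any subsequence there is a further subsequence whose $\Gamma$-limit has a density $f_0^T$. As in Step 3 of the proof of Theorem \ref{AsyHom}, planar periodicity makes $f_0^T$ independent of $x_\alpha$. Proposition \ref{DBCminpbs} then expresses $f_0^T(M)$ as the limit of the minimum problems on $Q_r$ with boundary datum $M x_\alpha$. The blow-up $y=(x_\alpha/\e,x_d)$ turns these into $H^T_{R}$-type problems with $R=r/\e$, but with ratio $\delta(\e)$ in place of a fixed $\delta$:
\[
\F^T_{1,1/\delta(\e)}(w,Q_R)=(\delta(\e)\vee1)\int_{B_T\x\R}\int_{(Q_R\x I)_{1,\delta(\e)}(\xi)} f\big(y,\xi,w(y+\xia+\delta(\e)\xid)-w(y)\big)\d y\d\xi .
\]
Change variables $\xi_d'=\delta(\e)\xi_d/\delta_0$. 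The shift becomes $\delta_0\xi_d'$, the domain becomes $(Q_R\x I)_{1,\delta_0}(\xi')$, and the density becomes $\frac{\delta_0}{\delta(\e)}f(y,(\xi_\alpha,\tfrac{\delta_0}{\delta(\e)}\xi'_d),z)$. Since $|\xi_d'|<2/\delta_0$ on the relevant domain, \eqref{eq:homextra} gives
\[
\big|f(y,(\xi_\alpha,\tfrac{\delta_0}{\delta(\e)}\xi'_d),z)-f(y,\xi',z)\big|\le \alpha\big(|\tfrac{\delta_0}{\delta(\e)}-1|\,\tfrac{2}{\delta_0}\big)|z|^p .
\]
For $\alpha$ to be useful here we need $\alpha(0)=0$, which is implicit in \eqref{eq:homextra} (take $t=0$) together with continuity. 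The resulting error term is $o(1)\int_{B_T\x (2/\delta_0)I}\int |w(y+\xi)-w(y)|^p$. For near-minimizers this is controlled by $o(1)R^{d-1}(|M|^p+1)$, using Lemma \ref{lem:short-r-control} and the coercivity bound $\G^{r_0}\lesssim R^{d-1}(|M|^p+1)$, exactly as in Step 4 of Theorem \ref{AsyHom}. The factor $T$ is harmless because it stays fixed. Hence $f_0^T=\lim_R H^T_R=(f^T)^{\delta_0}_{\rm hom}$, independently of the subsequence. The Urysohn property and $T\to\infty$ (Step 4 of Theorem \ref{AsyHom}) conclude this case.

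\emph{Case $\delta_0=+\infty$.} Use the $z$-formulation, with $z_d=\delta(\e)\xi_d\in 2I$. The prefactor $\delta(\e)$ is exactly the Jacobian, so the energy becomes
\[
\int_{B_T\x\R}\int_{(Q_R\x I)_1(\zeta)} f\big(y,(\zeta_\alpha,\zeta_d/\delta(\e)),w(y+\zeta)-w(y)\big)\d y\d\zeta .
\]
Now $\zeta_d/\delta(\e)\to0$ uniformly on $|\zeta_d|<2$. By \eqref{eq:homextra} the density differs from $f(y,\zeta_\alpha,\cdot)$ by at most $\alpha(2/\delta(\e))|z|^p$. The same energy control shows this perturbation is negligible and yields \eqref{eq:infhom}. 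Existence of the limit in $R$ follows by repeating Steps 1--2 of Theorem \ref{AsyHom} for the kernel $f(x,\xia,\cdot)$ restricted to $|\xi_d|<2$. The growth bounds follow from \eqref{eq:homh3}--\eqref{eq:homh4}: $\sup_{|\xi_d|<2}$ integrability gives finite moments after integrating over $\xi_d\in 2I$.

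\emph{Main obstacle.} The hard step is the uniform energy control. The perturbation $\alpha(\cdot)|z|^p$ must be weighted by a bound on $\int|D u|^p$ that is not merely $\psi$-weighted: \eqref{eq:homextra} is unweighted, whereas the $p$-growth of $f$ only controls $\psi|z|^p$. I would handle this with Lemma \ref{lem:short-r-control} on the truncated range $|\xi_\alpha|<T$ and $|\zeta_d|<2$, which gives $\int|Du|^p\lesssim(T^p+1)\G^{r_0}$ with $\G^{r_0}$ bounded by the energy. The error is therefore $\alpha(o(1))\,C(T)\,R^{d-1}(|M|^p+1)$, which vanishes before $T\to\infty$.

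\emph{Convex case.} Under convexity, the superposition argument of Theorem \ref{thm:cell} applies verbatim to the limiting kernels. For $\delta_0<\infty$ it gives \eqref{eq:cell}; for $\delta_0=\infty$, applied to $f(x,\xia,\cdot)$, it gives \eqref{eq:infhom-cell}.
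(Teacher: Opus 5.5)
Your proposal follows the paper's proof essentially step by step. You blow up the Dirichlet problems as in Step 3 of Theorem \ref{AsyHom} with ratio $\delta(\e)$ and change variables in $\xi_d$. The paper normalizes to a unit vertical shift with density $f(x,\xi_\alpha,\xi_d/\delta_R,\cdot)$ in both cases, while you normalize to $\delta_0$ when it is finite, which is only a cosmetic difference. You then control the unweighted error $\alpha(\cdot)|z|^p$ on $C_T^2$ via Lemma \ref{lem:short-r-control} and the coercivity bound, and treat $\delta_0=\infty$ as the $\delta=1$ case of \eqref{eq:asyhom} for the $\xi_d$-independent integrand, exactly as the paper does.

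Two small corrections. First, setting $t=0$ in \eqref{eq:homextra} only yields the vacuous inequality $0\le\alpha(0)|z|^p$, so $\alpha(0)=0$ is not implied by the hypothesis. It must be read as part of the assumption, and the paper tacitly uses it as well. Second, $\alpha$ need not be monotone, so you should bound the error by $\sup_{0\le s\le t}\alpha(s)$, as the paper does, rather than by $\alpha$ evaluated at the largest shift.
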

\begin{proof}
We first consider the case $\delta_0\in(0,+\infty)$.
Condition \eqref{eq:homextra} allows us to show that the limit in \eqref{eq:qchom}, both for the varying ratios $\delta(\e)$ and for the constant ratio $\delta(\e)\equiv\delta_0$, coincide.

Let $f_0$ be given by Theorem \ref{thm:integral-rep}.
Proceeding as in Step 3 of the proof of Theorem \ref{AsyHom}, let $r>0$ be such that $Q_r\subset\omega$ (up to translation), let $u\in\mathcal{D}^\e_M(Q_r)$, set $R=\frac{r}{\e}$ and $\delta_R=\delta(\frac{r}{R})$, and let  $w\in\mathcal{D}_M^1(Q_R)$ be defined by $w(x)=\e^{-1}u(\e x_\alpha,x_d)$.
Notice that, we may assume that $\G^{r_0}_{1,\frac{1}{\delta_R}}(w,Q_R)\le C R^{d-1}|M|^p$.
By \eqref{FTr} with $\delta_R$ in place of $\delta$ and by a change of variables in $\xi_d$, we obtain
\begin{align*}
\F_{\e,\gamma(\e)}^T(u,Q_r) &= \frac{(\delta_R\vee1)}{R^{d-1}}\int_{B_T\x\R}\int_{(Q_R)_1(\xi_\alpha)\x I_{\delta_R}(\xi_d)}f(x,\xi,w(x+\xia+\delta_R\xid)-w(x))\d x\d\xi \\
&= \frac{(1\vee\delta_R^{-1})}{R^{d-1}}\int_{C_T^{2}}\int_{(Q_R\x I)_1(\xi)}f\Big(x,\xi_\alpha,\frac{\xi_d}{\delta_R},w(x+\xi)-w(x)\Big)\d x\d\xi.
\end{align*}
Using \eqref{eq:homextra} and Lemma \ref{lem:short-r-control}, we estimate
\begin{align*}
&\int_{C_T^2}\int_{(Q_R\x I)_1(\xi)}\Big|f\Big(x,\xi_\alpha,\frac{\xi_d}{\delta_R},w(x+\xi)-w(x)\Big)-f\Big(x,\xi_\alpha,\frac{\xi_d}{\delta_0},w(x+\xi)-w(x)\Big)\Big|\d x\d\xi \\
& \hspace{25ex} \le\int_{C_T^{2}}\alpha\Big(\Big|\frac{1}{\delta_R}-\frac{1}{\delta_0}\Big||\xi_d|\Big)\int_{(Q_R\x I)_1(\xi)}|w(x+\xi)-w(x)|^p\d x\d\xi \\
& \hspace{25ex} \le \sup_{0<t<2}\alpha\Big(\Big|\frac{1}{\delta_R}-\frac{1}{\delta_0}\Big|t\Big)\delta_R\int_{B_T\x\frac{2}{\delta_R}I}C(|\xi|^p+1)\G^{r_0}_{1,\frac{1}{\delta_R}}(w,Q_R)\d\xi \\
& \hspace{25ex} \le C T^{p+d-1} R^{d-1} (|M|^p+1) \sup_{0<t<2}\alpha\Big(\Big|\frac{1}{\delta_R}-\frac{1}{\delta_0}\Big|t\Big).
\end{align*}
The two previous formulas imply that
$$
\F_{\e,\gamma}^T(u,Q_r)=\frac{(1\vee\delta_R^{-1})}{R^{d-1}}\int_{C_T^{2}}\int_{(Q_R\x I)_1(\xi)}f\Big(x,\xi_\alpha,\frac{\xi_d}{\delta_0},u(x+\xi)-u(x)\Big)\d x\d\xi + o_R(1).
$$
Substituting  into \eqref{eq:qchom}, the conclusion follows by a change of variables.

The case $\delta_0=\infty$ follows by the same argument, replacing  $\delta_0^{-1}$ with $0$.
Finally, since \eqref{eq:infhom} is a particular case of \eqref{eq:asyhom} (corresponding to $\delta=1$ and to integrands $f$ independent of $\xi_d$), the existence of the limit follows.
\end{proof}

\section{Periodic Homogenization: Separation of scales}\label{sec:hom_separation}
We now want to further investigate the homogenization formula in the regime where one parameter is infinitesimal with respect to the other\ie 
$\gamma\ll\e$ or $\e\ll\gamma$. 
In this section we provide a setting leading to the so-called \emph{separation of scales}, namely that the computation of the $\Gamma$-limit as $\e,\gamma\to0$ can be obtained by taking two separate $\Gamma$-limits, first letting $\e\to0$ and then $\gamma\to0$, or in the opposite order. In the sequel, we denote $\delta(\e):=\e/\gamma$.

We start by discussing the case in which the nonlocality scale $\e$ vanishes faster than the thickness $\gamma$\ie $\e\ll\gamma$.
We consider
\begin{equation}\label{eq:d-period}
f_\e(x,\xi,z) := f\Big(\frac{x}{\e},\xi,z\Big), \quad x,\xi\in\R^d,\, z\in\R^m,
\end{equation}
where $f:\R^{2d}\times\R^m\to[0,+\infty)$ is a Borel function, $1$-periodic in $x$.

\begin{remark}\label{rmk:vert-trunc}{\rm
By the growth conditions \eqref{eq:homgc}, the tails of $|\cdot|^p\psi$ are uniformly controlled also in the vertical variable (this is not guaranteed for general families of kernels $\psi_\e$ as in Section \ref{sec:growth}).
Hence, in this section we denote with $\F^T_{\e,\gamma}$ the functionals with truncated interactions also in the vertical variable, namely integrating $\xi$ in $C_T$ rather than $B_T\x\R$.
The result of Lemma \ref{lem:trunc} remains valid} in this setting.
\end{remark}

As both the nonlocal-to-local limit (cf.\ \cite{AABPT23}) and dimension reduction for the local case (cf.\ \cite{LDR95,BFF00}) are well understood for the energies under consideration, a natural question is whether the limit functional, can be obtained by computing  two consecutive $\Gamma$-limits, namely
$$
\Glim_{\e\to 0} F_{\e,\gamma(\e)} = \Glim_{\gamma\to 0} \Bigl(\Glim_{\e\to 0} F_{\e,\gamma}\Bigr).
$$ 
Here, we highlighted the dependence of $\gamma$ on $\e$ for clarity.
Keeping the thickness of the domain $\omega\x\gamma I$ fixed, by \cite[Theorem 6.1]{AABPT23} we have 
$$
\Glim_{\e\to0} \frac{1}{\gamma}\int_{\R^d}\int_{(\omega\x\gamma I)_\e(\xi)}f\Big(\frac{x}{\e},\xi,\frac{v(x+\e\xi)-v(x)}{\e}\Big)\d x\d\xi=\frac{1}{\gamma}\int_{\omega\x\gamma I}f_{\rm bulk}(\nabla v(x))\d x =: F_\gamma (v)
$$
where $f_{\rm bulk}$ is given by the full-dimensional homogenization-formula \cite[formula (6.7)]{AABPT23}, namely for any $M\in\R^{m\x(d-1)}$ and $b\in\R^m$,
\begin{equation}\label{eq:hom-fulld}
f_{\rm bulk}(M|b):=\lim_{R\to+\infty} \frac{1}{R^d}\inf_{w\in\mathcal{D}_{(M|b)}^1(Q_R^d)}\int_{\R^d}\int_{(Q_R^d)_1(\xi)}f(x,\xi,w(x+\xi)-w(x))\d x\d\xi
\end{equation}
with $\mathcal{D}_{(M|b)}^1(Q_R^d)=\{u\in L^p(Q_R^d) : u(x)=(M|b)x,\ \dist(x,\R^d\setminus Q_R^d)>1\}$ and $(M|b)$ denoting the $\R^{m\times d}$-matrix with last column $b$.
The limit functional $F_\gamma$ is now a \emph{local} hyperelastic energy on the thin domain $\omega\x\gamma I$. Hence, by \cite{LDR95} we obtain
$$
\Glim_{\gamma\to0} F_\gamma (v) = 2\int_\omega Q\overline{f_{\rm bulk}}(\nabla_\alpha v(x_\alpha))\d x_\alpha, \quad \text{where } \overline{f_{\rm bulk}}(M) := \inf_{b\in\R^m} f_{\rm bulk} (M|b),
$$
with respect to the convergence of the rescaled functions, and where $Q$ denotes the quasiconvexification in $\R^{m\times(d-1)}$.

It is natural to ask whether the limit density $f_0$ of $F_{\e,\gamma(\e)}$ coincides with $2Q\overline{f_{\rm bulk}}$.
Proposition \ref{prop:delta0gen} provides a positive answer. 
To prove this, we follow the strategy in \cite[Propositions 4 and 5]{ABC08}, introducing two additional assumptions on the density $f$ (cf.\ \eqref{eq:sym}) as in the result therein.

\begin{proposition}\label{prop:delta0gen}
Let $F_{\e,\gamma}: L^p(\Omega^\gamma;\R^m)\to[0,+\infty]$ be defined by
\begin{equation}\label{eq:functionals_Hom}
F_{\e,\gamma}(v):=\Big(\frac{\e}{\gamma^2}\vee\frac{1}{\gamma}\Big)\int_{\R^d}\int_{\Omega^\gamma_{\e}(\xi)} f\Big(\frac{x}{\e},\xi, \frac{v(x+\e\xi)-v(x)}{\e}\Big)\d x \d\xi,
\end{equation}
where $f$ is a $1$-periodic function in $x$ and satisfies \eqref{eq:homgc}--\eqref{eq:homh12}, and
assume that
$$
\lim_{\e\to0}\frac{\e}{\gamma}=0.
$$
Assume in addition that 
\begin{equation}\label{eq:sym}
f(x,\xi,z)= f(\xa,\xi,z)
\quad \text{and} \quad
f(x,\xi,z)= f(x,\xi_\alpha,-\xi_d,z),
\quad \text{for every } x,\xi\in\R^d, z\in\R^m.
\end{equation}
Then, for every $M\in\R^{m\times(d-1)}$, the limit
\begin{equation}\label{eq:asyhom0}
\begin{aligned}
f_{\rm hom}^{0}(M)&= \lim_{\delta\to0}\lim_{R\to+\infty}\frac{1}{R^{d-1}}\inf_{u\in\mathcal{D}_M^1(Q_R)}\int_{\R^d}\int_{(Q_R\x I)_{1,\delta}(\xi)}f(x,\xi,u(x+\xia+\delta\xid)-u(x))\d x\d\xi
\end{aligned}
\end{equation}
exists and defines a quasiconvex function satisfying  
$$
c(|M|^p-1)\le f_{\rm hom}^0(M) \le C(|M|^p+1)
$$ 
for some $C>c>0$, and
$$
\Glim_{\e\to0} F_{\e,\gamma}(v)=\int_\omega f_{\rm hom}^0(\nabla_\alpha v(x_\alpha))\d x_\alpha,
$$
for every $v\in W^{1,p}(\omega;\R^m)$, with respect to the convergence in Definition \ref{def:conv}.
Moreover, for every $M\in\R^{m\x(d-1)}$ it holds that 
\begin{equation}\label{eq:sepdens}
f_{\rm hom}^0(M)=2Q\overline{f_{\rm bulk}}(M)\,.
\end{equation}
\end{proposition}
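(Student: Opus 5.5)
The plan is to pass through the full-dimensional homogenized density $f_{\rm bulk}$ and show that every cluster density $f_0$ given by Theorem \ref{thm:integral-rep} equals $2Q\overline{f_{\rm bulk}}$. Since that density does not depend on the subsequence, the Urysohn property then gives convergence of the whole family. Existence of the limit in \eqref{eq:asyhom0} and its identification come out of the same argument.

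First, for fixed $\delta>0$ the inner limit in $R$ in \eqref{eq:asyhom0} exists by Step 2 of Theorem \ref{AsyHom}. Call it $g_\delta(M)$; it differs from $f^\delta_{\rm hom}$ only by the factor $(\delta\vee1)=1$ for $\delta\le1$. The first assumption in \eqref{eq:sym} makes $f$ independent of $x_d$. This makes \eqref{eq:homh3}--\eqref{eq:homh4} unnecessary in the regime $\delta\to0$, where only vertical truncation (Remark \ref{rmk:vert-trunc}) and \eqref{eq:homh12} are needed. Rescaling $x_d\mapsto x_d/\delta$, the problem defining $g_\delta$ becomes $\delta$ times a full-dimensional nonlocal problem on the slab $Q_R\times\frac1\delta I$ with lateral boundary condition $Mx_\alpha$ and free top and bottom faces. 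Step 3 of Theorem \ref{AsyHom} identifies $f_0=\lim_k g_{\delta_k}$ along $\delta_k=\e/\gamma\to0$, after an interchange of limits in $R$ and $\delta$. I would justify that interchange by the $R$-uniform error bounds of Steps 1--2 combined with the continuity-in-$\delta$ estimate of Proposition \ref{prop:homextra}. That estimate is not directly available, so I would instead compare $g_\delta$ with $2Q\overline{f_{\rm bulk}}$ uniformly, as follows.

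\emph{Upper bound $\limsup_{\delta\to0}g_\delta(M)\le 2Q\overline{f_{\rm bulk}}(M)$.} Fix $b\in\R^m$ and take an almost minimizer $w$ of \eqref{eq:hom-fulld} for $(M|b)$ on $Q^d_S$. Tile the slab $Q_R\times\frac1\delta I$ with translated copies of $w$ (using $x$-periodicity, the linear drift $(M|b)x$ being compatible across cells), exactly as in Step 2 of Theorem \ref{AsyHom}. Boundary layers of width $\sim T$ near the top and bottom faces carry energy $O(T/S)$ relative to the thickness $2/\delta$, hence are negligible as $\delta\to0$. The factor $\delta\cdot\frac{2}{\delta}=2$ produces $2f_{\rm bulk}(M|b)$. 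Taking the infimum over $b$ gives $2\overline{f_{\rm bulk}}$. The quasiconvexification is then recovered by replacing the affine map $Mx_\alpha$ with a $W^{1,p}_0$-perturbation $\varphi$ at mesoscale, i.e.\ gluing the construction above with $(\nabla_\alpha\varphi|b(x_\alpha))$ piecewise constant, via the cut-off/De Giorgi averaging of Proposition \ref{subadditivity}.

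\emph{Lower bound (main obstacle).} Here I follow \cite[Propositions 4 and 5]{ABC08}. Take a minimizer $u$ for $g_\delta$ on $Q_R\times\frac1\delta I$. The thickness $1/\delta$ of this slab is not a multiple of any length scale, so I use the reflection symmetry in \eqref{eq:sym}: reflecting $u$ evenly across $x_d=\pm\frac1\delta$ extends it to a function periodic of period $\frac4\delta$ in $x_d$. By the second condition in \eqref{eq:sym} together with $x_d$-independence of $f$, interactions crossing the reflection plane have the same cost as their mirror images inside. Hence the energy per unit vertical length of the extension is controlled by that of $u$ up to the boundary-layer term. Vertical periodicity means the averaged vertical gradient $b$ is $0$ for the periodized function. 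I then test \eqref{eq:hom-fulld} on the cube $Q^d_{4/\delta}$, first adjusting boundary values on the top and bottom by a cut-off of cost $o(1)$, using the Poincaré inequality Lemma \ref{lem:poinc} and the vertical control of Lemma \ref{lem:vert-control}. This yields $g_\delta(M)\ge 2\inf_b f_{\rm bulk}(M|b)-o(1)\ge 2Q\overline{f_{\rm bulk}}(M)-o(1)$. The delicate point is verifying that the reflected interactions near the planes are genuinely dominated and that the lateral boundary data for the cube match; this is where I expect the difficulty to lie. Finally, quasiconvexity and growth of $f^0_{\rm hom}$ follow from Theorem \ref{thm:integral-rep}.
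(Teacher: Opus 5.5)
\emph{Verdict.} Your overall plan (bound the cluster density above and below by $2Q\overline{f_{\rm bulk}}$, then apply Urysohn) is reasonable. Your lower bound has the same skeleton as the paper's: reflect, periodize vertically, compare with a full-dimensional problem. However, it has a genuine gap at the reflection step.

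\emph{The reflection step.} The symmetry \eqref{eq:sym} only shows that an interaction with both endpoints in the same reflected copy costs the same as the corresponding interaction of $u$. Consider instead an interaction from $x$ with $x_d<\frac1\delta$ to $x+\xi$ with $x_d+\xi_d>\frac1\delta$. After reflection it becomes $u(x_\alpha+\xi_\alpha,\frac2\delta-x_d-\xi_d)-u(x)$. This is a difference of $u$ between two interior points with vertical offset $2(\frac1\delta-x_d)-\xi_d\neq\pm\xi_d$, yet it is weighted by $f(\cdot,\xi,\cdot)$. It therefore matches no term in the energy of $u$; its geometric mirror image is again a crossing interaction. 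So it is a genuinely new contribution. Via Lemma \ref{lem:short-r-control} it can only be bounded by the energy of $u$ in a layer of width $\sim T$ at the reflection plane. At the free face of an almost minimizer there is no a priori reason for that energy to be small.

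\emph{What is missing there.} The missing idea is the De Giorgi-type averaging used in the paper:
\begin{itemize}
\item among $N$ layers of width $T$ (width $\delta T$ after rescaling) adjacent to each face, select the cheapest one, whose energy is at most $CR^{d-1}(|M|^p+1)/N$;
\item discard everything outside these layers, which is harmless in a lower bound since $f\ge0$ and loses only relative thickness $O(N\delta T)$;
\item reflect across these interior planes, so that the crossing terms cost $O(1/N)$.
\end{itemize}

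\emph{The final comparison.} This step also fails as written. The lateral side of your slab is $R$, and $R\to\infty$ before $\delta\to0$ (along the diagonal $R\delta_R=1/\gamma\to\infty$). So a cube $Q^d_{4/\delta}$ does not fit, and you cannot split $Q_R$ laterally, because interior sub-cubes carry no boundary datum. Moreover, your only control of $u-Mx_\alpha$ is Lemma \ref{lem:poinc} on $Q_R$, whose constant is of order $R^p$. Cutting off at top and bottom over a height $\lesssim\frac1\delta$ therefore costs a relative error at least of order $(R\delta)^p$, not $o(1)$. The paper instead stacks about $R\delta_R$ periods to obtain a function on $Q_R\times RI$ with lateral conditions only, rescales by $1/R$, and uses convergence of full-dimensional Dirichlet problems with lateral conditions together with \eqref{eq:quasiconv} at $t=1$. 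Alternatively, you could work on a box of height $\gg R$ before cutting off.

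\emph{Diagonal versus iterated limit.} Step 3 of Theorem \ref{AsyHom} identifies $f_0$ with the diagonal limit of $H^T_R(M)$, in which the ratio $\delta$ is tied to $R$. Both of your bounds must be proved at that level, not only for $g_\delta$.

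\emph{Your upper bound.} The paper argues differently here. It slices a thick domain of height $\frac2t$ into disjoint layers of thickness $2\gamma$, each a thin-film competitor by $x_d$-independence of $f$, and combines this with \eqref{eq:quasiconv}. This bounds $f_0$ directly. In your tiling, the drift $(M|b)x$ violates the lateral condition $u=Mx_\alpha$ by $bx_d$, of size $|b|/\delta$, so a lateral cut-off would be needed. The mesoscale gluing is unnecessary, since $f_0$ is quasiconvex. In fact, \eqref{eq:sym} gives $f_{\rm bulk}(M|b)=f_{\rm bulk}(M|-b)$, so rank-one convexity yields $\overline{f_{\rm bulk}}(M)=f_{\rm bulk}(M|0)$. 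This function is already quasiconvex, so tiling with $b=0$ suffices.
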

\begin{proof}
We first prove the existence of the limit in  \eqref{eq:asyhom0}.
We set $\delta_R:=\delta(1/R)$.
Given $R,S>0$ sufficiently large with $S>2R$, by repeating the arguments of Step 2 in the proof of Theorem \ref{AsyHom}, we can find a $u_{S,R}\in\mathcal{D}_M^1(Q_S)$ (cf.\ $u_S$ therein) such that
\begin{equation}\label{eq:recall}
\frac{1}{S^{d-1}}\F_{1,\frac{1}{\delta_R}}^T(u_{S,R},Q_S) \le H_R^T(M)+C\Big(\frac{1}{\sqrt{R}}+\frac{R}{S}\Big),
\end{equation}
where $C>0$ may depend on $T$ and $M$.

In contrast with the case of fixed $\delta$, the function $u_{S,R}$ is not an appropriate test function for $H_S^T(M)$, since it provides control at scale $\delta_R$ rather than $\delta_S$ (cf.\ formula \eqref{eq:recall}).

To overcome this, we construct a suitable test function by vertically patching together copies of $u_{S,R}$, and rescaling it to bring the vertical interaction to scale $\delta_S$.
However, this patching procedure may create additional energy.
To avoid this, we exploit a reflection argument in the vertical direction, for which assumption \eqref{eq:sym} is needed.
Since we lack boundary condition at the interfaces $x_d=\pm1$, we perform a reflection and patching procedure across energetically convenient layers, which are identified via an averaging argument. 

Fix $N\in\N$. By the growth conditions on $f$ and the definition of $u_{S,R}$, 
we can partition the vertical interval into layers of thickness $\delta_RT$ and select two indices $i_1\in\{-\lfloor \frac{1}{\delta_R T}\rfloor,\dots,N-1-\lfloor \frac{1}{\delta_R T}\rfloor\}$ and $i_2=\{\lfloor \frac{1}{\delta_R T}\rfloor-N,\dots, \lfloor \frac{1}{\delta_R T}\rfloor-1\}$ such that, setting $h_1= i_1\delta_R T$ and $h_2= i_2\delta_R T$, the energy concentrated in the corresponding layers is small, namely
\begin{equation}\label{eq:interface}
\begin{aligned}
\G_{1,\frac{1}{\delta_R}}[\psi](u_{S,R},Q_S\x(h_1,h_1+\delta_R T)) &\le \frac{1}{N}\sum_{i=-\lfloor \frac{1}{\delta_R T}\rfloor}^{N-1-\lfloor \frac{1}{\delta_R T}\rfloor}
\G_{1,\frac{1}{\delta_R}}[\psi]\Bigl(u_{S,R},Q_S\x(i\delta_R T,(i+1)\delta_R T)\Bigr)\\
&\le 
\frac{S^{d-1}}{N} C(|M|^p+1), \\[4pt]
\G_{1,\frac{1}{\delta_R}}[\psi](u_{S,R},Q_S\x(h_2-\delta_R T,h_2)) &\le \frac{1}{N}\sum_{i=\lfloor \frac{1}{\delta_R T}\rfloor-N}^{\lfloor \frac{1}{\delta_R T}\rfloor-1}
\G_{1,\frac{1}{\delta_R}}[\psi]\Bigl(u_{S,R},Q_S\x((i-1)\delta_R T, i\delta_R T)\Bigr) \\
&\le \frac{S^{d-1}}{N} C(|M|^p +1)\,.
\end{aligned}
\end{equation}
Here and in the sequel, in the localized functionals we make explicit the domain of the vertical variable whenever it differs from $I$.
Notice that $h_1, h_2\to \mp 1$ as $R\to+\infty$.

Define $J=(h_1-h_2,h_2-h_1)$ and construct $w_{S,R}:Q_S\x J\to\R^m$ by reflecting $u_{S,R}$ across the interface $x_d=h_1$, namely
$$
w_{S,R}(x):=\begin{cases}
u_{S,R}(x_\alpha,x_d+h_1) & x_d\in (0,h_2-h_1) \\
u_{S,R}(x_\alpha,-x_d+h_1) & x_d\in (h_1-h_2,0).
\end{cases}
$$
Thanks to assumptions \eqref{eq:sym}, vertical reflections do not change the energy.The only additional contributions arise from interactions crossing the reflecting interface, which are localized in a layer of thickness $\delta_R T$. More precisely, using \eqref{eq:interface} and the inclusion
$$
J_{\delta_R}(\xi_d)\subset (0,h_2-h_1)_{\delta_R}(\xi_d)\cup(h_1-h_2,0)_{\delta_R}(\xi_d)\cup(-\delta_R T,\delta_R T)
$$ 
we obtain
\begin{equation}\label{eq:reflect}
\F_{1,\frac{1}{\delta_R}}^T(w_{S,R},Q_S\x J)\le 2 \F_{1,\frac{1}{\delta_R}}^T(u_{S,R},Q_S)+\frac{S^{d-1}}{N}C(|M|^p +1).
\end{equation}
We glue together two copies of $w_{S,R}$ across the interface $x_d=h_2-h_1$. This is energetically convenient, since the construction locally corresponds to a reflection.
Hence, without relabeling, we extend $w_{S,R}$ periodically in the vertical direction by defining $w_{S,R}:Q_S\x\R\to\R^m$ as
\begin{equation*}
w_{S,R}(x) = w_{S,R}(x_\alpha,x_d-2j(h_2-h_1)), \quad x_d\in J+2j(h_2-h_1).
\end{equation*}
A suitable test function is then obtained by rescaling the vertical variable, passing from scale $\delta_R$ to scale $\delta_S$ (see Figure \ref{fig:gluing}).
\begin{figure}[tbh]
\begin{center}
\includegraphics{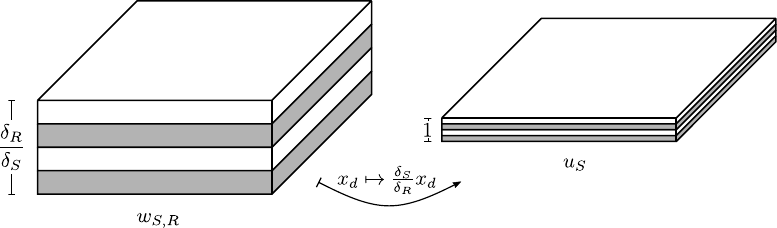}
\caption{On the left, the patching procedure at scale $\delta_R$. In gray the regions with the reflected function of $u_{S,R}$. On the right the rescaled function $u_S$.}
\label{fig:gluing}
\end{center}
\end{figure}
Indeed, define $u_S:Q_S\x I\to\R^m$ by 
$$
u_S(x')=w_{S,R}\Big(x_\alpha',\frac{\delta_R}{\delta_S}x_d'\Big)\,.
$$
By a change of variable in the vertical direction,
denoting $h=h_2-h_1$ and $J_j=J+2jh$, and recalling that $h\to 2$ as $R\to+\infty$, we obtain
\begin{align*}
\F_{1,\frac{1}{\delta_S}}^T(u_S,Q_S) & = \frac{\delta_S}{\delta_R} \int_{C_T}\int_{\big(Q_S\x\frac{\delta_R}{\delta_S}I\big)_{1,\delta_R}(\xi)} f(\boldsymbol{x_\alpha},\xi,D_{1,\delta_R}^\xi w_{S,R}(x))\d x\d\xi \\
& \le \frac{\delta_S}{\delta_R}\sum_{j=-\big\lceil\frac{\delta_R}{2h\delta_S}\big\rceil}^{\big\lceil\frac{\delta_R}{2h\delta_S}\big\rceil} \Big(\int_{C_T}\int_{(Q_S\x J_j)_{1,\delta_R}(\xi)}f(\boldsymbol{x_\alpha},\xi,D_{1,\delta_R}^\xi w_{S,R}(x))\d x\d\xi +\frac{S^{d-1}}{N}C|M|^p\Big)\\
& \le \frac{1}{2}\Big(1+\frac{\delta_S}{\delta_R}\Big)\Big(\F^T_{1,\frac{1}{\delta_R}}(w_{S,R},Q_S\x J)+\frac{S^{d-1}}{N}C|M|^p\Big).
\end{align*}
Combining this with \eqref{eq:recall} and \eqref{eq:reflect}, we get
$$
\frac{1}{S^{d-1}}\F_{1,\frac{1}{\delta_S}}^T(u_S,Q_S) \le 
H_R^T(M)+C\Big(\frac{1}{\sqrt{R}}+\frac{R^{d-2}}{S^{d-2}}+\frac{1}{N}|M|^p\Big)\,.
$$
Letting first $S\to\infty$, then $R\to\infty$, and finally using the arbitrariness of $N$, we obtain the desired upper bound and hence the existence of the limit in \eqref{eq:asyhom0}.

We now prove that $f_{\rm hom}^0= 2Q\overline{f_{\rm bulk}}$. 
We start by proving the inequality  $f_{\rm hom}^0\le 2Q\overline{f_{\rm bulk}}$.

Arguing as in  \cite[Section 3]{BFF00} in the $d$-dimensional $\R^m$-valued setting, we have
\begin{equation}\label{eq:quasiconv}
Q\overline{f_{\rm bulk}}(M) = \inf_{t>0} \inf_{\substack{\varphi\in W^{1,p}(Q_1\x I;\R^m) \\ \varphi=0 \text{ on } \partial Q_1\x I}} \frac{1}{2}\int_{Q_1\x I} f_{\rm bulk}\big(M+\nabla_\alpha\varphi(x)\big|t\partial_d\varphi(x)\big)\d x\,,
\end{equation}
where we assume for simplicity that $Q_1\subset\omega$.
By the full-dimensional homogenization result for nonlocal functionals \cite[Theorem 6.1]{AABPT23}, together with the convergence of Dirichlet problems \cite[Proposition 5.4]{AABPT23}, which also applies to partial boundary conditions thanks to the generality of \cite[Proposition 4.1]{AABPT23}), we obtain, for every fixed $t>0$, after a rescaling argument in the vertical direction,
\begin{multline*}
\inf_{\substack{\varphi\in W^{1,p}(Q_1\x I;\R^m) \\ \varphi=0 \text{ on } \partial Q_1\x I}} \frac{1}{2}\int_{Q_1\x I} f_{\rm bulk}\big(M+\nabla_\alpha\varphi(x)\big|t\partial_d\varphi(x)\big)\d x \\
= \lim_{\e\to0} \inf_{w\in\mathcal{D}_M^\e(Q_1)} \frac{t}{2} \int_{\R^d}\int_{(Q_1\x\frac{1}{t}I)_\e(\xi)}f\Big(\frac{x}{\e},\xi,\frac{w(x+\e\xi)-w(x)}{\e}\Big)\d x\d\xi.
\end{multline*}
Now, for every fixed $\e$ and $w\in\mathcal{D}_M^\e(Q_1)$, we define
$$
w^{(i)}(x):=w(x_\alpha,x_d-(i+1)\gamma(\e)), \quad x\in Q_1\x(i\gamma(\e),(i+2)\gamma(\e))\,,
$$
for every $i\in\{-\lfloor\frac{1}{2\gamma(\e)t}\rfloor,\dots,\lfloor\frac{1}{2\gamma(\e)}t\rfloor-1\}$. 
In words, $w^{(i)}$ is obtained by  restricting $w$ to a layer of thickness $2\gamma(\e)$.
For fixed $t$, such a construction is well-defined for $\e$ sufficiently small, since $\gamma(\e)=\e/\delta(\e)\to0$ as $\e\to0$.
By neglecting the interactions between different layers, we obtain
\begin{align*}
& \frac{t}{2}\int_{\R^d}\int_{(Q_1\x\frac{1}{t}I)_\e(\xi)}f\Big(\frac{x}{\e},\xi,\frac{w(x+\e\xi)-w(x)}{\e}\Big)\d x\d\xi \\
& \qquad \ge \frac{t}{2} \sum_{i=-\big\lfloor\frac{1}{2\gamma(\e) t}\big\rfloor}^{\big\lfloor\frac{1}{2\gamma(\e) t}\big\rfloor-1}\int_{\R^d}\int_{(Q_1\x(i\gamma(\e),(i+2)\gamma(\e)))_\e(\xi)}f\Big(\frac{x}{\e},\xi,\frac{w(x+\e\xi)-w(x)}{\e}\Big)\d x\d\xi \\
& \qquad \ge \frac{t}{2} \sum_{i=-\big\lfloor\frac{1}{2\gamma(\e) t}\big\rfloor}^{\big\lfloor\frac{1}{2\gamma(\e) t}\big\rfloor-1}\int_{\R^d}\int_{(Q_1\x\gamma(\e)I)_\e(\xi)}f\Big(\frac{x}{\e},\xi,\frac{w^{(i)}(x+\e\xi)-w^{(i)}(x)}{\e}\Big)\d x\d\xi. 
\end{align*}
Taking the infimum with respect to $w$ and observing that each function $w^{(i)}$ belongs to $\mathcal{D}_M^\e(Q_1)$ since it inherits the boundary condition $w^{(i)}(x)=Mx_\alpha$ whenever $\dist(x_\alpha,\R^{d-1}\setminus Q_1)<\e$), we deduce that
\begin{equation*} 
 \inf_{w\in\mathcal{D}_M^\e(Q_1)}\frac{t}{2}\int_{\R^d}\int_{(Q_1\x\frac{1}{t}I)_\e(\xi)}f\Big(\frac{x}{\e},\xi,\frac{w(x+\e\xi)-w(x)}{\e}\Big)\d x\d\xi \ge \Big(\frac{1}{2}- \gamma(\e) t\Big) \inf_{v\in\mathcal{D}_M^\e(Q_1)} F_{\e, \gamma(\e)}(v,Q_1).
\end{equation*}
Passing to the limit as $\e\to 0$ and then taking the infimum with respect to $t>0$, by Proposition \ref{DBCminpbs}, Theorem \ref{AsyHom}, and \eqref{eq:quasiconv}, we conclude that
$$
2\,Q\overline{f_{\rm bulk}}(M) \ge f_{\rm hom}^0(M)\,.
$$

We now prove the opposite inequality. To this end, we exploit the construction introduced in the first part of the proof.
Let $u_R\in\mathcal{D}_M^1(Q_R)$ be such that
$$
\frac{1}{R^{d-1}}\F_{1,\frac{1}{\delta_R}}^T(u_R,Q_R) < H_R^T(M)+\frac{1}{R}
$$
and let $w_R:Q_R\x\R\to\R^m$ be obtained by reflecting $u_R$ across the energetically convenient interfaces $x_d=h_1$ and $x_d=h_2$. Arguing as before, we obtain
\begin{align*}
\F_{1,\frac{1}{\delta_R}}^T(u_R,Q_R) &\ge \frac{1}{\delta_R R}\int_{C_T}\int_{(Q_R\x(R\delta_R)I)_{1,\delta_R}(\xi)} f(\boldsymbol{x_\alpha},\xi,D_{1,\delta_R}^\xi w_R(x))\d x\d\xi - C|M|^p\frac{R^{d-1}}{N} \\
& = \frac{1}{R} \int_{C_T}\int_{(Q_R\x RI)_1(\xi)}f(\boldsymbol{x_\alpha},\xi,v_R(x'+\xi)-v_R(x'))\d x'\d\xi-C|M|^p\frac{R^{d-1}}{N}
\end{align*}
where $v_R\in\mathcal{D}_M^1(Q_R)$ is defined by $v_R(x')=w_R(x_\alpha',\delta_R x_d')$.
Dividing by $R^{d-1}$ and letting  $R\to\infty$, we conclude the proof of \eqref{eq:sepdens}.

Finally, since $f_{\rm hom}^0=2Q\overline{f_{\rm bulk}}$, it follows in particular that it does not depend on the choice of $\delta(\e)$, which concludes the proof of formula \eqref{eq:asyhom0}.
\end{proof}

The result above also applies to the cell formulas in the convex case.
In this setting, the additional assumptions \eqref{eq:sym} are not required. Moreover, 
since $f_{\rm bulk}$ is convex, $\overline{f_{\rm bulk}}$ is convex as well, so that no quasiconvexification is  needed.

\begin{proposition}\label{prop:delta0}
Let $F_{\e,\gamma}$ be defined by \eqref{eq:functionals_Hom}, where $f$ is a $1$-periodic function in $x$ satisfying \eqref{eq:homgc}--\eqref{eq:homh12}, and assume that
$$
\lim_{\e\to0}\frac{\e}{\gamma}=0.
$$
Assume in addition that $f(x,\xi, \cdot)$ is convex for every $x,\xi\in\R^d$.
Then the conclusion of Proposition {\rm\ref{prop:delta0gen}} holds true with
\begin{equation}\label{eq:sep0}
f_{\rm hom}^0(M) = \inf_{b\in\R^m} \inf_{w\in\mathcal{D}^{\#}_{(M|b)}(Q_1^d)} 2\int_{\R^d}\int_{Q_1^d} f(x,\xi,w(x+\xi)-w(x))\d x\d\xi,
\end{equation}
where 
$$\mathcal{D}^{\#}_{(M|b)}(Q_1^d):=\{u\in L^p_{\rm loc}(\R^{d}; \R^m) : u-(M|b)x \text{ is } Q_1^d \text{-periodic}\,\}\,,$$
 and $(M|b)$ denotes the $m\x d$ matrix having $b$ as last column.
\end{proposition}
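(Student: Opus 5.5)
We plan to show directly that, in the convex setting, both sides of the separation-of-scales identity reduce to cell formulas, and then compare those cell formulas without invoking \eqref{eq:sym}. First, by the convex cell formula for nonlocal homogenization in full dimension (the analogue of Theorem \ref{thm:cell} in \cite{AABPT23}), we have
$$
f_{\rm bulk}(M|b)=\inf_{w\in\mathcal{D}^{\#}_{(M|b)}(Q_1^d)}\int_{\R^d}\int_{Q_1^d} f(x,\xi,w(x+\xi)-w(x))\d x\d\xi .
$$
Then the right-hand side of \eqref{eq:sep0} equals $2\overline{f_{\rm bulk}}(M)$. Since $f_{\rm bulk}$ is convex, jointly in $(M|b)$, the marginal $\overline{f_{\rm bulk}}$ is convex and hence quasiconvex, so $Q\overline{f_{\rm bulk}}=\overline{f_{\rm bulk}}$. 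It therefore suffices to prove that the $\Gamma$-limit exists and has density $2\overline{f_{\rm bulk}}$, uniformly along every sequence with $\e/\gamma\to0$.

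By Theorem \ref{thm:integral-rep} and planar periodicity, every subsequence has a further subsequence with a homogeneous limit density $f_0$. By Proposition \ref{DBCminpbs}, $f_0(M)$ is the limit of the Dirichlet minima on $Q_1$. Note that, unlike Theorem \ref{AsyHom}, the density here oscillates vertically at scale $\e$, which is why we work with $d$-periodic cells.

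\emph{Upper bound $f_0\le 2\overline{f_{\rm bulk}}$.} This follows exactly as in the first inequality of the proof of Proposition \ref{prop:delta0gen}, which did not use \eqref{eq:sym}. Slice a full-dimensional Dirichlet recovery sequence on $Q_1\x\frac1t I$ into layers of thickness $2\gamma$ and compare with $\inf F_{\e,\gamma}$. Then take $t\to\infty$ in \eqref{eq:quasiconv}, and note that the infimum over $t$ of the local cell problem equals $\overline{f_{\rm bulk}}$ here by convexity. Alternatively, one can take a near-optimal periodic $w\in\mathcal{D}^{\#}_{(M|b)}(Q_1^d)$ and test with $v_\e(x)=\e w(x/\e)-b\,x_d$, cut off near $\partial Q_1$ via the boundary-layer argument of Step 1 of Theorem \ref{AsyHom}. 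Then $\e w(x/\e)$ converges to $Mx_\alpha+bx_d$, whose rescaled version converges to $Mx_\alpha$. The energy per unit rescaled volume tends to $2\int\!\!\int f$, because $\e\ll\gamma$: the factor $2$ is $|I|$, and the cells cut by the top and bottom faces contribute $O(\e/\gamma)$, which vanishes by \eqref{eq:homh12}.

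\emph{Lower bound $f_0\ge 2\overline{f_{\rm bulk}}$.} This is the main step, and here \eqref{eq:sym} was used before through reflection. In the convex case we replace reflection by averaging (superposition). Take a near-minimizer $v_\e\in\mathcal{D}^{\e}_M(Q_1)$ for $F_{\e,\gamma}(\cdot,Q_1)$, rescaled to $u$ on $Q_R\x(\gamma/\e)I$ with $R=1/\e$ and $L:=\gamma/\e\to\infty$. Average $u$ over planar integer translates as in the proof of Theorem \ref{thm:cell}, and also over vertical integer translates within the slab. In the vertical direction we first subtract the mean slope $b:=\frac{1}{2L}\ave_{Q_R}(u(\cdot,L)-u(\cdot,-L))$ and truncate the averaging window to integer shifts $k$ with $|k|\le L-T$. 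Jensen's inequality, convexity, and $1$-periodicity of $f$ in all of $x$ then yield a $Q_1^d$-periodic-plus-$(M|b)x$ competitor. Its cell energy is bounded by $\frac{1}{2LR^{d-1}}\F^T(u)$ up to boundary layers of relative size $T/L+T/R$. Controlling these layers uses Lemma \ref{lem:short-r-control}, the truncation of Remark \ref{rmk:vert-trunc}, and Lemma \ref{lem:trunc}.

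The main obstacle is making the vertical averaging produce an exactly periodic function with affine part $(M|b)x$. The averaged function is only approximately periodic in $x_d$, because $u$ has no boundary condition on the top and bottom faces. We plan to handle this by averaging the difference quotients (the function $D^\xi u$) rather than $u$ itself. Along each vertical line we write $u=bx_d+\text{(bounded-oscillation part)}$, and we use the Poincaré estimate of Lemma \ref{lem:vert-control} and Lemma \ref{lem:poinc} to show the defect is $o(L)$ in $L^p$ per unit area. Alternatively, we reduce to the dual problem as in convex homogenization. Letting $T\to\infty$ then gives $f_0\ge 2\overline{f_{\rm bulk}}$. Since the limit is independent of the subsequence, the whole family converges by the Urysohn property.
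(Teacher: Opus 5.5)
Your reduction and upper bound are sound, but the lower bound, which is the actual content of the statement, is left open.

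\textbf{What works.} The full-dimensional convex cell formula identifies the right-hand side of \eqref{eq:sep0} with $2\overline{f_{\rm bulk}}(M)$, which is convex. Testing the $\Gamma$-limsup at $Mx_\alpha$ with $\e w(x/\e)$, $w\in\mathcal{D}^{\#}_{(M|b)}(Q_1^d)$, gives $f_0\le 2\overline{f_{\rm bulk}}$ with no cut-off, since after rescaling the slope $bx_d$ becomes $\gamma b x_d\to0$. Two slips need fixing:
\begin{itemize}
\item Drop the ``$-b\,x_d$'' in $v_\e$. With it, the difference quotients are those of $w-by_d\in\mathcal{D}^{\#}_{(M|0)}(Q_1^d)$, and you only obtain $2f_{\rm bulk}(M|0)$.
\item The slicing argument of Proposition \ref{prop:delta0gen} is not free of \eqref{eq:sym}. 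The vertical shifts by $(i+1)\gamma$ are not periods of $f(x/\e,\cdot,\cdot)$; shifting by multiples of $\e$ instead repairs this.
\end{itemize}

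\textbf{The gap.} The step you call the main obstacle is not resolved, and none of the proposed fixes closes it:
\begin{itemize}
\item Averaging difference quotients instead of $u$ changes nothing, because the averaged quotients are exactly the quotients of the averaged function $W$.
\item The decomposition ``$u=bx_d+$bounded oscillation along each vertical line'' is false for finite-energy competitors. The vertical slope may vary in $x_\alpha$ on the scale $R$, so individual lines can deviate from $bx_d$ by order $L$.
\item Lemmas \ref{lem:vert-control} and \ref{lem:poinc} act at the scale of the whole film. They only bound full-height vertical increments by $O(L)$, not their deviation from $2Lb$ by $o(L)$.
\item $u(\cdot,\pm L)$ is not defined for an $L^p$ function, so $b$ cannot be defined through it.
\end{itemize}

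\textbf{The missing idea.} The full planar periodization already makes the defect small after normalization. Set $\bar U:=R^{1-d}\sum_{h_\alpha}u(\cdot+h_\alpha)$, which is exactly $My_\alpha$ plus a $Q_1$-periodic function of $y_\alpha$. Set $W:=\frac{1}{2K}\sum_{k=-K}^{K-1}\bar U(\cdot+ke_d)$ with $K=\lfloor L\rfloor-T-2$, so that all interactions used stay in the slab and $K/L\to1$. Then
\[
W(y+e_d)-W(y)=\frac{1}{2K}\big(\bar U(y+Ke_d)-\bar U(y-Ke_d)\big).
\]
Define $b$ as the mean of the right-hand side over the unit cell; this is a volume average, not a trace. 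Apply a unit-scale nonlocal Poincar\'e inequality to $\bar U-My_\alpha$ on cells near heights $\pm K$. Combined with Jensen and the $O(L)$ bound on energy per unit area, this controls the oscillation there by $O(L^{1/p})$. Hence, using $p>1$, the defect $W(\cdot+e_d)-W-b$ is at most $C_TL^{1/p-1}\to0$ in $L^p$ on the cells within distance $T$.

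Next, cut $W$ at a good height and re-extend it by $w(y+e_d)=w(y)+b$. Each interaction crossing the cut is perturbed by a sum of at most $T+1$ defects. The resulting energy error vanishes by H\"older and the local Lipschitz bound, valid for convex $f$ satisfying \eqref{eq:homgc}:
\[
|f(y,\xi,z)-f(y,\xi,z')|\le C\big(\psi(\xi)(|z|^{p-1}+|z'|^{p-1})+\rho(\xi)^{1-1/p}\psi(\xi)^{1/p}\big)|z-z'|.
\]

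\textbf{Comparison with the paper.} The paper runs the same vertical superposition, but on the thin cell problem $f_\delta$: it periodically extends $u_\delta-(M|b_\delta)x$ vertically and averages over shifts in $\delta\Z$. The interactions across those extension interfaces are exactly what must be controlled there too.
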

\begin{proof}
We define
$$
f_\delta(M):= \inf_{v\in\mathcal{D}^{\#}(Q_1)}
 \int_{\R^d}\int_{Q_1\times I} f\Big(x_\alpha,\frac{x_d}{\delta},\xi,v(x+\xia+\delta\xid)-v(x)\Big)\d x\d\xi\,.
$$
By Proposition \ref{prop:delta0gen} and reasoning as in the proof of Proposition \ref{thm:cell}, it suffices to show that
$$
f_{\rm hom}^0(M) = \lim_{\delta\to 0} f_\delta(M),
$$
provided that the limit on the right-hand side exists.
Notice as well that, in the vertical space-variable, the energy in $I\setminus I_\delta(\xi_d)$ can be controlled as in the estimate of the  term in $Q_R\setminus (Q_R)_1(\xi_\alpha)$ in the proof of Theorem \ref{thm:cell}, since $\delta\to0$.
We first prove that
$$
\limsup_{\delta\to0}f_{\delta}(M)\le g(M|b)\quad \text{for every } b\in\R^m,
$$
where we set
$$
g(M|b):=\inf_{w\in\mathcal{D}^{\#}_{(M|b)}(Q_1^d)} 2\int_{\R^d}\int_{Q_1^d} f(x,\xi,w(x+\xi)-w(x))\d x\d\xi.
$$
Let $w\in\mathcal{D}^{\#}_{(M|b)}(Q_1^d)$. By periodicity of both $f$ and $w$, for any $\delta>0$, we obtain
\begin{align*}
2\int_{\R^d}\int_{Q_1^d} f(x,\xi,w(x+\xi)-w(x))\d x\d\xi &\ge \frac{\delta}{1+\delta} \int_{\R^d}\int_{Q_1\x\frac{1}{\delta}I} f(x,\xi,D_1^\xi w(x))\d x\d\xi \\
& = \frac{1}{1+\delta}\int_{\R^d}\int_{Q_1\x I} f\Big(x_\alpha,\frac{x'_d}{\delta},\xi,D_{1,\delta}^\xi u(x')\Big)\d x'\d\xi\,,
\end{align*}
where we performed the change of variables $x_d'=\delta x_d$ and set $u(x_\alpha,x_d')=w(x)$.
Since $u(x')-(M|\frac{b}{\delta})x'$ is $Q_1\x(0,\delta)$-periodic, it follows that $u\in\mathcal{D}^{\#}_{M}(Q_1)$. 
Therefore, taking the infimum over $u$, $w$, and $b$, and then letting $\delta\to0$, we deduce 
$$
\limsup_{\delta\to0} f_\delta (M) \le \inf_{b\in\R^m} g(M|b)\,.
$$

We now prove the opposite inequality. Let $u_\delta\in\mathcal{D}^{\#}(Q_1)$ be such that
$$
\int_{\R^d}\int_{Q_1\times I_\delta(\xi_d)} f\Big(x_\alpha,\frac{x_d}{\delta},\xi,u(x+\xia+\delta\xid)-u(x)\Big)\d x\d\xi \le f_\delta(M)+\delta.
$$
Define $b_\delta\in\R^m$ as the vertical average of $u_\delta$, namely
$$
b_\delta:=\ave_I \int_{Q_1}(u_\delta(x_\alpha,x_d)-Mx_\alpha)\d x_\alpha\d x_d.
$$
Let $\tilde u_\delta$ be obtained by taking the 
$(Q_1\times I)$-periodic extension of $u_\delta(x)-(M|b_\delta)x$ and then adding back $(M|b_\delta)x$, so that $\tilde u_\delta\in\mathcal{D}^\#_{(M|b_\delta)}(Q_1\times I)$.
By superposition, we define a  function which is $\delta$-periodic in the vertical direction as
$$
v_\delta(x) := \frac{\delta}{2} \sum_{i\in(\delta\Z\cap[-1,1))}\tilde u_\delta(x_\alpha,x_d+i) \in \mathcal{D}^\#_{(M|b_\delta)}(Q_1\x(0,\delta)).
$$
By the change of variables $x_d'=\frac{x_d}{\delta}$ and setting $w_\delta(x'):=v_\delta(x)$, we obtain
$w_\delta\in\mathcal{D}^\#_{(M|\delta b_\delta)}(Q_1^d)$.
Exploiting the periodicity of $f$ and $w_\delta$, by a change of variables, and the convexity of $f$, we deduce
\begin{align*}
g(M|\delta b_\delta) &\le 2\int_{\R^d}\int_{Q_1^d} f(x',\xi,w_\delta(x'+\xi)-w_\delta(x'))\d x'\d\xi \\
&= \delta\int_{\R^d}\int_{Q_1\x(\frac{1}{\delta}I)} f(x',\xi,w_\delta(x'+\xi)-w_\delta(x'))\d x'\d\xi \\
&= \int_{\R^d}\int_{Q_1\x I} f\Big(x_\alpha,\frac{x_d}{\delta},\xi,u_\delta(x+\xia+\delta\xid)-u_\delta(x)\Big)\d x\d\xi \\
&\le \int_{\R^d}\int_{Q_1\x I} f\Big(x_\alpha,\frac{x_d}{\delta},\xi,v_\delta(x+\xia+\delta\xid)-v_\delta(x)\Big)\d x\d\xi < f_\delta(M)+\delta.
\end{align*}
Passing to the limit as $\delta\to0$, we obtain
$$
\inf_{b\in\R^m} g(M|b) \le \liminf_{\delta\to0} f_\delta (M),
$$
which yields the existence of the limit and  concludes the proof.
\end{proof}

We conclude this section by turning to the other critical regime\ie
$\gamma\ll\e$.
Assuming convexity and independence of $f$ from the vertical variables, we can strengthen the limit formula in \eqref{eq:infhom-cell}, reducing $f_{\rm hom}^\infty$ to a $(d-1)$-dimensional cell formula.

\begin{proposition}\label{prop:deltainfinity}
Let $F_{\e,\gamma}$ be defined by \eqref{eq:functionals_Hom}, where $f$ is a $1$-periodic function in $x$ satisfying \eqref{eq:homgc}--\eqref{eq:homh4}, and assume that
$$
\lim_{\e\to0}\frac{\e}{\gamma}=+\infty.
$$
Assume in addition that $f(x,\xi,\cdot)\equiv f(\xa,\xia,\cdot)$ and that $f(\xa,\xia,\cdot)$ is convex for every $x_\alpha,\xi_\alpha\in\R^{d-1}$.
Then, for every $M\in\R^{m\x(d-1)}$, the function $f_{\rm hom}^\infty$ defined in \eqref{eq:infhom-cell} 
satisfies
\begin{equation}\label{eq:hom_cell-infinity}
f^\infty_{\rm hom}(M) = 4\inf_{w\in\mathcal{D}^{\#}_{\alpha,M}(Q_1)} \int_{\R^{d-1}}\int_{Q_1} f(\xa,\xia,w(x_\alpha+\xi_\alpha)-w(x_\alpha))\d x_\alpha\d\xi_\alpha,
\end{equation}
where 
$$\mathcal{D}^{\#}_{\alpha,M}(Q_1):=\{u\in L^p_{\rm loc}(\R^{d-1}; \R^m) : u-Mx_\alpha \text{ is } Q_1\text{-periodic}\, \}\,.$$
\end{proposition}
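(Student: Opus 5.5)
Since $f$ does not depend on $x_d$ or $\xi_d$, the starting point is formula \eqref{eq:infhom-cell} of Proposition \ref{prop:homextra}. Hypothesis \eqref{eq:homextra} holds trivially with $\alpha\equiv0$, so the proposition applies and
$$
f_{\rm hom}^\infty(M)=\inf_{u\in\mathcal{D}^\#_M(Q_1)}\int_{\R^d}\int_{Q_1\x I_1(\xi_d)} f(\xa,\xia,u(x+\xi)-u(x))\d x\d\xi.
$$
I denote the right-hand side of \eqref{eq:hom_cell-infinity} by $4g(M)$, where $g$ is the planar infimum. I will prove the two inequalities separately.

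\emph{Upper bound.} Take $w\in\mathcal{D}^\#_{\alpha,M}(Q_1)$ and test the formula above with $u(x):=w(x_\alpha)$, which lies in $\mathcal{D}^\#_M(Q_1)$. Since $f$ and $u$ are independent of the vertical variables, Fubini gives
$$
\int_{\R}|I_1(\xi_d)|\d\xi_d\int_{\R^{d-1}}\int_{Q_1}f(\xa,\xia,w(x_\alpha+\xi_\alpha)-w(x_\alpha))\d x_\alpha\d\xi_\alpha.
$$
Here $|I_1(\xi_d)|=(2-|\xi_d|)_+$, whose integral is $4$, so $f_{\rm hom}^\infty\le 4g$.

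\emph{Lower bound.} Fix $u\in\mathcal{D}^\#_M(Q_1)$ with finite energy. For each pair $(x_d,y_d)\in I\x I$, set $\xi_d=y_d-x_d$ and rewrite the energy via $x_d,y_d$. This gives
$$
\int_I\int_I\int_{\R^{d-1}}\int_{Q_1} f\big(\xa,\xia,u(x_\alpha+\xi_\alpha,y_d)-u(x_\alpha,x_d)\big)\d x_\alpha\d\xi_\alpha\d x_d\d y_d .
$$
The change of variables $(x_d,\xi_d)\mapsto(x_d,y_d)$ has unit Jacobian, and it maps exactly $\{\xi_d\in\R,\ x_d\in I_1(\xi_d)\}$ onto $I\x I$. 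Now define $\bar w(x_\alpha):=\ave_I u(x_\alpha,t)\d t$, which belongs to $\mathcal{D}^\#_{\alpha,M}(Q_1)$. By Jensen's inequality for the convex function $f(\xa,\xia,\cdot)$ with respect to the probability measure $\frac14\d x_d\d y_d$ on $I\x I$, the average of $u(x_\alpha+\xi_\alpha,y_d)-u(x_\alpha,x_d)$ is exactly $\bar w(x_\alpha+\xi_\alpha)-\bar w(x_\alpha)$. The energy is therefore at least
$$
4\int_{\R^{d-1}}\int_{Q_1}f(\xa,\xia,\bar w(x_\alpha+\xi_\alpha)-\bar w(x_\alpha))\d x_\alpha\d\xi_\alpha\ \ge\ 4g(M).
$$
Taking the infimum over $u$ gives $f_{\rm hom}^\infty\ge 4g$.

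The only delicate point is the measure-theoretic justification of the lower bound, namely Fubini/Tonelli for nonnegative integrands and Jensen applied pointwise in $(x_\alpha,\xi_\alpha)$. This requires $u(x_\alpha,\cdot)\in L^1(I)$ for a.e.\ $x_\alpha$, which holds since $u\in L^p_{\rm loc}$. It also requires that the finite-valued convex function $f$ be continuous in $z$, which holds because it is convex and finite everywhere; together with the growth condition \eqref{eq:homgc} this ensures the integrals involved are well defined. No truncation or boundary-layer estimate is needed, because the whole argument works at the level of the periodic cell formula.
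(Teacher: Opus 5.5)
Your proposal is correct and is essentially the paper's proof: the upper bound comes from testing with $x_d$-independent competitors, and the lower bound from Jensen's inequality in the vertical variables, using the vertical average $\frac12\int_I u(\cdot,t)\,\d t$ as planar competitor (this is exactly the paper's $w_\delta$). The only difference is cosmetic. The paper carries an auxiliary parameter $\delta$ (its $f_\delta$, which is in fact independent of $\delta$ here since $f$ does not depend on $\xi_d$) and lets $\delta\to+\infty$, whereas you work directly with \eqref{eq:infhom-cell} via the substitution $y_d=x_d+\xi_d$, which is slightly cleaner.
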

\begin{proof}
Since $f$ does not depend on $x_d$ and $\xi_d$, the results of Proposition \ref{prop:homextra} apply with $\delta_0=+\infty$. 
Let us define
$$
f_\delta(M):= \delta \inf_{u\in\mathcal{D}^{\#}(Q_1)}
 \int_{\R^d}\int_{Q_1\times I_\delta(\xi_d)} f(\xa,\xia,u(x+\xia+\delta\xid)-u(x))\d x\d\xi.
$$ 
Reasoning as in the proof of Proposition \ref{thm:cell}, it suffices to show that $f_{\rm hom}^\infty$,  in \eqref{eq:infhom-cell}, reduces to \eqref{eq:hom_cell-infinity}.
We first prove the lower bound. By convexity of $f$ and Jensen's inequality, and observing that
$$
\int_{\frac{2}{\delta}I}\int_{I_\delta(\xi_d)}\d x_d\d\xi_d=\frac{4}{\delta}
$$
for every $u\in\mathcal{D}^\#_M(Q_1)$ we obtain
\begin{multline*}
\delta\int_{\R^d}\int_{Q_1\x I_\delta(\xi_d)}f(\xa,\xia,u(x+\xia+\delta\xid)-u(x))\d x\d\xi \\
\ge 4\int_{\R^{d-1}}\int_{Q_1} f\Big(\xa,\xia,\frac{\delta}{4}\int_{\frac{2}{\delta}I}\int_{I_\delta(\xi_d)}\big(u(x+\xia+\delta\xid)-u(x)\big)\d x_d\d\xi_d\Big)\d x_\alpha\d\xi_\alpha.
\end{multline*}
Denote by $w_\delta\in\mathcal{D}^{\#}_{\alpha,M}(Q_1)$ the function
$$
w_\delta(x_\alpha):=\frac{\delta}{4}\int_{\frac{2}{\delta}I}\int_{I_\delta(\xi_d)}u(x)\d x_d\d\xi_d\,.
$$
By the change of variables $y_d=x_d-\delta\eta_d$, $\xi_d=-\eta_d$, we get
$$
\frac{\delta}{4}\int_{\frac{2}{\delta}I}\int_{I_\delta(\xi_d)}u(x+\xia+\delta\xid)\d x_d\d\xi_d=\frac{\delta}{4}\int_{\frac{2}{\delta}I}\int_{I_\delta(\eta_d)}u(x_\alpha+\xi_\alpha,y_d)\d y_d\d\eta_d=w_\delta(x_\alpha+\xi_\alpha).
$$
Hence the previous estimate yields
$$
f_\delta(M)\ge 4\inf_{w\in\mathcal{D}^{\#}_{\alpha,M}(Q_1)} \int_{\R^{d-1}}\int_{Q_1} f(x_\alpha,\xi_\alpha,w(x_\alpha+\xi_\alpha)-w(x_\alpha))\d x_\alpha\d\xi_\alpha.
$$
Passing to the limit as $\delta\to+\infty$ we obtain the desired lower bound.

For the opposite inequality, it is enough to consider admissible functions $u$ that are independent of the vertical variable $x_d$. In this case, the functional reduces directly to the right-hand side of \eqref{eq:hom_cell-infinity}, which yields the upper bound.
\end{proof}

\begin{remark}\label{infinity-surf}{\rm
Proposition \ref{prop:deltainfinity} can be interpreted as a separation of scales in the regime $\gamma\ll\e$.
Indeed, for convex nonlocal energies with fixed $\e$, Jensen’s inequality yields
$$
\Glim_{\gamma\to0} F_{\e,\gamma}(w)=4\int_{\R^{d-1}}\int_{\omega_\e(\xi_\alpha)}f\Big(\frac{\xa}{\e},\xia,\frac{w(x_\alpha+\e\xi_\alpha)-w(x_\alpha)}{\e}\Big)\d x_\alpha\d\xi_\alpha =: F^{(d-1)}_\e(w).
$$
The asymptotic behaviour of the family $F^{(d-1)}_\e$ is known by \cite[Theorem 6.2]{AABPT23}, and yields
\begin{equation*}
\Glim_{\e\to0} F_\e^{(d-1)}(w)=4\int_\omega f_{\rm surf}(\nabla_\alpha w(x_\alpha))\d x_\alpha\,;
\end{equation*}
where
$$
f_{\rm surf}(M):=\inf_{w\in\mathcal{D}^{\#}_{\alpha,M}(Q_1)} \int_{\R^{d-1}}\int_{Q_1} f(x_\alpha,\xi_\alpha,w(x_\alpha+\xi_\alpha)-w(x_\alpha))\d x_\alpha\d\xi_\alpha
$$
is the $(d-1)$-dimensional cell formula\ie  the analog of the bulk density \eqref{eq:hom-fulld} in dimension $d-1$. Therefore, we can identify 
$$
\Glim_{\e\to0} F_{\e,\gamma(\e)} = \Glim_{\e\to0} \big(\Glim_{\gamma\to0} F_{\e,\gamma}\big),
$$
which shows that $f_{\rm hom}^\infty=4 f_{\rm surf}$.
}\end{remark}

\section{Examples}
In this section we collect some examples illustrating the main features of our asymptotic analysis, focusing on the role of nonlocality, dimension reduction, and the interplay between the two scales $\e$ and $\gamma$.

\subsection{Homogeneous convex energies}
We start with two simple examples that, on the one hand, provide explicit expressions for the homogenization formulas in the different regimes and, on the other hand, highlight the effect of separation of scales in Propositions \ref{prop:delta0} and \ref{prop:deltainfinity}.

\begin{example}[Purely-convolution energies]\label{ex:pure-conv}
{\rm
We consider the functionals introduced in Definition \ref{convfun} with convex, homogeneous density $f(x,\xi,z)=\chi_{C_r}(\xi)|z|^p$, for some $r>0$.
By Proposition \ref{prop:GlimG}, we have 
\begin{equation*}\label{another_fhom}
f_{\rm hom}^{0}(M) = 4r \int_{B_r} |M\xi_\alpha|^p \d\xi_\alpha
\quad \text{and} \quad
f_{\rm hom}^{\infty}(M) = 4 \int_{B_r} |M\xi_\alpha|^p \d\xi_\alpha.
\end{equation*}
On the other hand, by \cite[Theorem 3.1]{AABPT23}, we have
$$
f_{\rm bulk}(M|b) = \int_{C_r}|(M|b)\xi|^p\d\xi
\quad \text{and} \quad
f_{\rm surf}(M) = \int_{B_r}|M\xi_\alpha|^p\d\xi_\alpha.
$$
The conclusion of Proposition \ref{prop:deltainfinity} follows immediately.
Regarding the regime $\e\ll\gamma$, by applying Jensen's inequality in $\xi_d$ and exploiting  the symmetry of $rI$, for every $M\in\R^{m\x(d-1)}$ and $b\in\R^m$ we obtain
$$
f_{\rm bulk}(M|b) \ge 2r \int_{B_r}\Big|M\xi_\alpha+b\frac{1}{2r}\int_{-r}^r\xi_d\d\xi_d\Big|^p\d\xi_\alpha = 2r \int_{B_r} |M\xi_\alpha|^p \d\xi_\alpha = f_{\rm bulk}(M|0).
$$
Therefore, $\overline{f_{\rm bulk}}(M)=f_{\rm bulk}(M|0)$, and we recover formula \eqref{eq:sepdens}.
}\end{example}

A more general case is provided by the following example, which exploits the remark in \cite[Section 6.3]{AABPT23} concerning the relaxation of homogeneous nonlocal energies.
Here we use the fact that the computation of the $\Gamma$-limit of $\F_{\e,\gamma}$ can be reduced to functionals with prescribed affine transversal component $b\in\R^m$, and then taking the infimum over all $b\in \R^m$, in the spirit of \cite{BFM03}.

Although in this example the strategy is facilitated by the simple structure of the densities, we believe that a similar approach could be carried out in greater generality.
This may be useful, for instance, in constructing recovery sequences with controlled transversal component, 
and could  potentially allow one to remove the additional assumptions \eqref{eq:sym} in Proposition \ref{prop:delta0}.
We leave this point for future  investigation.

\begin{example}[Homogeneous convex energies]\label{ex:hom-conv}{\rm
We consider $f(x,\xi,z)\equiv f(\xi,z)$ convex in the last variable.
In this cases, formula \eqref{eq:hom-delta} reduces to
\begin{equation}\label{eq:cross-relax}
f_{\rm hom}^\delta(M) = \inf_{b\in\R^m} (\delta\vee1)\int_{\R^d}(2-\delta|\xi_d|)_+f(\xi,(M|b)\xi)\d\xi,
\end{equation}
for $\delta\in(0,+\infty)$.
To prove this, we first consider a constraint minimization problem, namely for every $M\in\R^{m\x(d-1)}$ and $b\in\R^m$, we define
\begin{align*}
f_{\rm hom}^\delta(M|b) := \inf_{u\in\mathcal{D}_M^\#(Q)}(\delta\vee 1)\int_{\R^d}\int_{Q\x I_\delta(-\xi_d)} f(\xi,u(x+\xia)-u(x)+b\xi_d)\d x\d\xi.
\end{align*}
By Jensen's inequality, exploiting the planar periodicity of the test functions, and testing  with the affine functions, we obtain
$$
f_{\rm hom}^\delta(M|b) = (\delta\vee1)\int_{\R^d}(2-\delta|\xi_d|)_+ f(\xi,(M|b)\xi)\d\xi.
$$
Since $f_{\rm hom}^\delta(M|b)$ coincides with the minimization problem defining $f_{\rm hom}^\delta(M)$ restricted to functions satisfying $u(x+\delta\xid)-u(x)\equiv b\xi_d$, we have
$$
f_{\rm hom}^\delta(M) \le \inf_{b\in\R^m} f_{\rm hom}^\delta(M|b).
$$
Moreover, observing that
$$
\inf_{b\in\R^m} f_{\rm hom}^\delta(M|b) = \inf_{b\in L^1_{\rm loc}(\R;\R^m)}(\delta\vee 1)\int_{\R^d}(2-\delta|\xi_d|)_+f(\xi,(M|b(\xi_d))\xi)\d\xi,
$$
an application of Jensen's inequality in the definition  of $f_{\rm hom}^\delta$ yields the reverse inequality, and hence \eqref{eq:cross-relax}.
Recalling \cite[formula (6.24)]{AABPT23}, and assuming \eqref{eq:homextra} in the regime $\gamma\ll\e$, Propositions \ref{prop:deltainfinity} and \ref{prop:delta0} yield
$$
f_{\rm hom}^\infty(M) = 4 f_{\rm surf}(M) = 4 \int_{\R^{d-1}}f(\xi_\alpha,M\xi_\alpha)\d\xi_\alpha
$$
and
$$
f_{\rm hom}^0(M) = 2\overline{f_{\rm bulk}}(M) = 2 \inf_{b\in\R^m} \int_{\R^d} f(\xi,(M|b)\xi)\d\xi,
$$
yielding the result in every regime.
}
\end{example}

\subsection{Change of nature in different regimes}

We conclude our analysis by presenting an example showing that the nonlocal thin-film energy under consideration may exhibit a different ``nature" in various regimes. 
In particular, this provides an explicit example in which the two limiting processes, namely nonlocal-to-local and dimension reduction, do not commute (while in Example \ref{ex:pure-conv} the two limits coincide up to renormalization).

\begin{example}\label{ex:rotation}{\rm
We fix $m=d\ge 3$ and consider the following homogeneous energy density
$$
f(\xi,z) = \chi_{C_1}(\xi) f_0\Big(\frac{|z|}{|\xi|}\Big)+\frac{1}{|C_\eta|}\chi_{C_\eta(e_1\pm e_d)}(\xi)f_1(z)+\frac{1}{|C_\eta|}\chi_{C_\eta(e_2\pm e_d)}(\xi)f_2(z),
$$
where $f_0(t)=(t-1)^p$, $f_i(z)=|z-e_i|^p$, and $\eta\in(0,\frac{1}{2})$.
Here $e_i\in\R^d$ denotes the $i$-th vector of the canonical orthonormal basis, and $C_\eta(e_i\pm e_d)$
stands for $C_\eta(e_i+ e_d)\cup C_\eta(e_i- e_d)$.
By Theorem \ref{AsyHom}, $f_{\rm hom}^\delta$ is given by formula \eqref{eq:asyhom} for every $\delta>0$.

For $\delta$ sufficiently large, the function $f_{\rm hom}^\delta$ is invariant under orthogonal transformation\ie for every $M\in\R^{d\x(d-1)}$ and every $R\in O(d)$ it holds that 
$$f_{\rm hom}^\delta(RM)= f_{\rm hom}^\delta(M)\,.$$
Indeed, for $\delta>4$ the contributions coming from $f_1$ and $f_2$ do not play any role since $\frac{2}{\delta}I\cap (\eta I\pm1)=\emptyset$.
This implies that
$$
f_{\rm hom}^\delta(M) = \lim_{R\to+\infty} \inf_{u\in\mathcal{D}_M^1(Q_R)}\frac{\delta\vee1}{R^{d-1}}\int_{C_1^{\frac{2}{\delta}}}\int_{Q_R\x I_\delta(\xi_d)}f_0\Big(\frac{|u(x+\xia+\delta\xid)-u(x)|}{|\xi|}\Big)\d x\d\xi.
$$
By setting $w(x)=Ru(x)$ and minimizing over $\mathcal{D}_{RM}^1(Q_R)$, the invariance under orthogonal transformations follows   immediately.

If instead $\delta$ is  sufficiently small, this invariance is lost for suitable values of the parameter $\eta$.
Indeed, we can show that 
$$f_{\rm hom}^\delta(I_{d,d-1})<f_{\rm hom}^\delta(-I_{d,d-1})\,,
$$
where $(I_{d,d-1})_{i,j}=\delta_{i,j}$ for $1\le i\le d$, $1\le j\le d-1$ with $\delta_{i,j}$ being the Kronecker delta.
Let $\delta<1$. Then $\frac{2}{\delta}I \supset (\eta I\pm1)$, so that the contributions of all the functions $f_i$ are included.
Choosing the test function $u(x)=(x_\alpha,x_d\delta)$ and observing that $|\xi-e_i|^p\le1+2p\eta(1+2\eta)^{p-1}$ for any $\xi\in C_\eta(e_i\pm e_d)$, we obtain
\begin{align*}
f_{\rm hom}^\delta(M) &\le\int_{C_1}(2-\delta|\xi_d|)f_0(1)\d\xi + 2\ave_{C_\eta(e_1\pm e_d)}f_1(\xi)\d\xi +2\ave_{C_\eta(e_2\pm e_d)}f_2(\xi)\d\xi \\
&\le 4+8p\eta(1+2\eta)^{p-1}.
\end{align*}
On the other hand, since
by convexity the homogenization formulas of $f_i$ reduce to \eqref{eq:cell}, we infer that
\begin{align*}
f_{\rm hom}^\delta(-I_{d,d-1}) \ge \inf_{u\in\mathcal{D}_M^\#(Q_1)}\sum_{i=1,2}2\ave_{C_\eta(e_i\pm e_d)}\int_{Q_1\x I_\delta(\xi_d)}f_i(D_{1,\delta}^\xi u(x))\d x\d\xi.
\end{align*}
For every $i=1,2$, and for $u\in\mathcal{D}_M^\#(Q_1)$, we denote
$$
D_1^{\xi_\alpha} u(x)=u(x+\delta\xid+\xia)-u(x+\delta\xid),
\quad
w(x_d)=\int_{Q_1} u(x) dx_\alpha,
$$
and
$$
D_\delta^{\xi_d} w(x_d)=w(x_d+\delta\xid)-w(x_d),
\quad
b=\ave_{(\eta I\pm 1)}\ave_{I_\delta(\xi_d)}D_{\delta}^{\xi_d} w(x_d)\d x_d\d\xi_d\,.
$$
Applying Jensen's inequality and exploiting the planar periodicity of $u$, we obtain
\begin{align*}
\ave_{C_\eta(e_i\pm e_d)}\int_{Q_1\x I_\delta(\xi_d)} & f_i(D_{1,\delta}^\xi u(x))\d x\d\xi \\
&= \ave_{C_\eta(e_i\pm e_d)}\int_{Q_1\x I_\delta(\xi_d)}f_i(D_{1}^{\xi_\alpha} v(x)+D_\delta^{\xi_d} u(x))\d x\d\xi
\\
&\ge \ave_{C_\eta(e_i\pm e_d)}(2-\delta|\xi_d|)f_i\Big(-\xi_\alpha+\ave_{I_\delta(\xi_d)}D_{\delta}^{\xi_d} w(x_d)\d x_d\Big)\d\xi \\
&\ge (2-\delta(1+\eta)) f_i\Big(-\ave_{B_\eta(e_i)}\xi_\alpha\d\xi_\alpha+\ave_{(\eta I\pm 1)}\ave_{I_\delta(\xi_d)}D_{\delta}^{\xi_d} w(x_d)\d x_d\d\xi_d\Big) \\
&\ge (2-\delta(1+\eta)) f_i(-e_i+b).
\end{align*}
Hence, using the previous estimate and recalling that $\delta<1$, we obtain
\begin{align*}
f_{\rm hom}^\delta(-I_{d,d-1}) &\ge 2(2-\delta(1+\eta))(f_1(-e_1+b)+f_2(-e_2+b) \\
&\ge 2(1-\eta) \inf_{b\in\R^d} \{f_1(-e_1+b)+f_2(-e_2+b)\} \ge (1-\eta) 2^{2+\frac{p}{2}}.
\end{align*}
As, for $\eta<\eta_p$ for a certain $\eta_p>0$ sufficiently small (depending on $p$), it holds that $4+8p\eta(1+2\eta)^{p-1} < 1-2^{-\frac{p}{2}}$, then
$$
f_{\rm hom}^\delta(I_{d,d-1}) < f_{\rm hom}^\delta(-I_{d,d-1})\,,
$$
which proves that, for $\delta<1$ and $\eta<\eta_p$, the function $f_{\rm hom}^\delta$ is not invariant under orthogonal transformations.

Since the estimate is uniform in $\delta$, the analysis also shows that $f_{\rm hom}^\infty$ is invariant under orthogonal transformations, whereas $f_{\rm hom}^0$ is not.
}
\end{example}

\section*{Acknowledgements}
N.A. gratefully acknowledges support from the Lise Meitner Professorship at Center for Mathematical Science, Faculty of Engineering, LTH, Lund University, Sweden. 
A.T. gratefully acknowledges funding by the Deutsche Forschungsgemeinschaft (DFG, German Research Foundation) through SPP 2256, project ID 441068247.

\bibliographystyle{abbrv}
\bibliography{references}

\end{document}